\documentclass[12pt,reqno]{article}

\usepackage[T1]{fontenc}
\usepackage{amsmath}
\usepackage{amsthm}
\usepackage{amscd}
\usepackage{amssymb}
\usepackage{enumerate}
\usepackage{latexsym}
\usepackage{xypic}





\usepackage{epsfig}

\usepackage{graphicx}
\def\epsilon{\varepsilon}


\newcommand{\Out}{\mathrm{Out}}

\newcommand{\inv}{^{-1}}

\newcommand{\hdim}{\mathrm{Hdim}}
\newcommand{\gap}{g}




\newcommand{\FN}{F_N}   

\newcommand{\barCVN}{\bar{\mathrm{CV}}_N}   

\newcommand{\CA}{{\cal A}}

\newcommand{\Lleg}{L_\tau}
\newcommand{\Latt}{L_\Phi}

\newcommand{\CQ}{{\cal Q}}

\newcommand{\TPhiinvobs}{\widehat T_{\Phi\inv}^{\text{\scriptsize obs}}}

\newcommand{\Rauzy}{\mathcal{R}_\sigma}

\renewcommand{\phi}{\varphi}

\newcommand{\R}{\mathbb R}

\newcommand{\N}{\mathbb N}

\def\bar{\overline}
\def\tilde{\widetilde}
\def\hat{\widehat}


\newtheorem{thm}{Theorem}[section]
\newtheorem*{thm*}{Theorem}
\newtheorem*{thmhausdim}{Theorem~\ref{thm:hausdim}}
\newtheorem{cor}[thm]{Corollary}
\newtheorem{lem}[thm]{Lemma}
\newtheorem{prop}[thm]{Proposition}

\theoremstyle{definition}

\newtheorem*{defn*}{Definition}

\newtheorem*{rem*}{Remark}

\theoremstyle{remark}

\numberwithin{equation}{section}


\begin{document}

\title{Fractal trees for irreducible automorphisms of free groups}

\author{Thierry Coulbois}

\date{\today }

\maketitle

\begin{abstract}
  The self-similar structure of the attracting subshift of a primitive
  substitution is carried over to the limit set of the repelling tree
  in the boundary of Outer Space of the corresponding irreducible
  outer automorphism of a free group. Thus, this repelling tree is
  self-similar (in the sense of graph directed constructions).  Its
  Hausdorff dimension is computed. This reveals the fractal nature of
  the attracting tree in the boundary of Outer Space of an irreducible
  outer automorphism of a free group.
\end{abstract}

\tableofcontents

\newpage

Throughout this article, $\FN$ denotes the free group of finite rank $N\geq 2$.

An $\R$-tree $(T,d)$ is an arcwise connected metric space such that
two points $P$ and $Q$ are connected by a unique arc and this arc is
isometric to the real segment $[0,d(P,Q)]$. An $\R$-tree is usually
regarded as a $1$-dimensionnal object. And, indeed, if $T$ is a
non-trivial $\R$-tree with a minimal action of $\FN$ by isometries,
then $T$ is a countable union of arcs and thus has Hausdorff dimension
$1$. 

Surprisingly, we exhibit in this article $\R$-trees $T$ in the boundary
of M.~Culler and K.~Vogtmann's Outer Space $\barCVN$ (which is made of
$\R$-trees with minimal, very-small action of $\FN$ by isometries),
such that the Hausdorff dimension of their metric completion $\bar T$
is strictly bigger than $1$.

More precisely, we prove that, for an irreducible (with irreducible
powers) outer automorphism $\Phi$ of $\FN$, the metric completion
$\bar T_\Phi$ of the attracting tree $T_\Phi$ in the boundary of Outer
Space has Hausdorff dimension
\[
\hdim(\bar T_\Phi)\geq\max(1;\frac{\ln\lambda_{\Phi\inv}}{\ln\lambda_\Phi})
\]
where $\lambda_\Phi$ and $\lambda_{\Phi\inv}$ are the expansion
factors of $\Phi$ and $\Phi\inv$ respectively. We insist that these
two expansion factors may be distinct leading to a Hausdorff dimension
strictly bigger than $1$.

This lower bound on the Hausdroff dimension is achieved by computing
the exact Hausdorff dimension of a subset of the metric completion:
the limit set $\Omega$. This is the subset of $\bar T_\Phi$ where the
dynamic of $\Phi$, as given by the repelling lamination concentrates.

\medskip

For an irreducible (with irreducible powers) outer automorphism
$\Phi$ of the free group $\FN$, M.~Bestvina, M.~Feighn and M.~Handel
(\cite{bfh-lam}) defined the attracting lamination $\Latt$. By choosing a
basis $\CA$ of $\FN$, the lamination $\Latt$ can be viewed as a
symbolic dynamical system (indeed a subshift of the shift on
bi-infinite reduced words in $\CA^{\pm 1}$) as explained in
\cite{chl1-I} and briefly recalled in Section~\ref{sec:laminations}.

The attracting lamination $\Latt$ is best described if we choose a
train-track representative $\tau=(\Gamma,*,\pi,f)$ of $\Phi$, where
$\Gamma$ is a finite graph with base point $*$, $\pi$ is a marking
isomorphism between $\FN$ and the fundamental group $\pi_1(\Gamma,*)$
and $f$ is a homotopy equivalence inducing $\Phi$ via
$\pi$. M.~Bestvina and M.~Handel (\cite{bh-traintrack}) defined
train-track representatives and proved that they always exist for
irreducible (with irreducible powers) outer automorphisms of $\FN$
(see Sections~\ref{sec:autom-topol-repr} and
\ref{sec:train-track-repr}). The lamination $\Latt$ is a closed set of
bi-infinite paths in the universal cover $\tilde\Gamma$ of $\Gamma$,
and it is invariant under application of any lift $\tilde f$ of $f$ to
$\tilde\Gamma$ (see Section~\ref{subsec:attlam}).

Using the chart given by the train-track representative $\tau$ to
describe the attracting lamination $\Latt$ we get in
Proposition~\ref{prop:lamfrac} a self-similar decomposition of $\Latt$
into finitely many cylinders. Self-similarity is here to be understood
in the sense of graph directed constructions as introduced by
\cite{mw} which is a generalisation of iterated function systems. We
refer to \cite{edgar} for introduction and background on this topic.

The self-similar structure of the attracting lamination is wellknown
to symbolic dynamists and a key tool to deal with it is the
prefix-suffix automaton (see Section~\ref{sec:prefix}).

In this article we carry over this self-similar decompostion of the
attracting lamination, which is partly folklore,
to the limit set of the repelling tree $T_{\Phi\inv}$ of $\Phi$ in the
boundary of Culler-Vogtman Outer Space.  We refer to K.~Vogtman's
survey \cite{vogt-survey} for background on Outer Space.

A construction of the repelling tree $T_{\Phi\inv}$ of $\Phi$ can be found in
\cite{gjll}. It is an $\R$-tree with a very small, minimal action of
$\FN$ by isometries with dense orbits. It comes with a contracting
homothety $H$ associated to the choice of a representative
automorphism $\phi$ of the outer class $\Phi$. The ratio of $H$ is
$\frac{1}{\lambda_{\Phi\inv}}$, where $\lambda_{\Phi\inv}$ is the
expansion factor of $\Phi\inv$ (see Section~\ref{sec:repel-tree}).

From~\cite{ll-north-south,ll-north-south,chl1-II} (see
Sections~\ref{subsec:Q}, \ref{subsec:Q2} and
\ref{sec:dual-attr-lamin}), there exists a continuous map $\CQ^2$ that
maps the attracting lamination $\Latt$ into the metric completion
$\bar T_{\Phi\inv}$ of the repelling tree $T_{\Phi\inv}$. The
self-similar decomposition of the attracting lamination is carried
over through $\CQ^2$ to get a self-similar limit set $\Omega$ inside
$\bar T_{\Phi\inv}$. Using the ratio $\frac{1}{\lambda_{\Phi\inv}}$ of
the homothety $H$, we get the main result of this article:

\begin{thmhausdim}
  Let $\Phi$ be an irreducible (with irreducible powers) outer
  automorphism of the free group $\FN$. Let $T_{\Phi\inv}$ be the
  repelling tree of $\Phi$.
  
The limit set $\Omega\subseteq\bar T_{\Phi\inv}$ has Hausdorff dimension
\[
\delta_{\Phi\inv}=\hdim(\Omega)=\frac{\ln\lambda_{\Phi}}{\ln\lambda_{\Phi\inv}}
\]
where $\lambda_\Phi$ and $\lambda_{\Phi\inv}$ are the expansion
factors of $\Phi$ and $\Phi\inv$ respectively.
\end{thmhausdim}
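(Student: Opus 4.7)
The plan is to invoke the Mauldin--Williams theorem for graph directed self-similar sets, applied to $\Omega = \CQ^2(\Latt) \subseteq \bar T_{\Phi\inv}$ with the prefix-suffix automaton as the underlying directed graph and the contracting homothety $H$ as the source of the similarity maps.

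\textbf{Step 1: read off the graph directed structure.} From Proposition~\ref{prop:lamfrac}, $\Latt$ decomposes into finitely many cylinders indexed by the states of the prefix-suffix automaton, and each cylinder is a disjoint union of translates of $f$-images of other cylinders, one summand per outgoing edge. Pushing this decomposition through $\CQ^2$ gives, for each state $v$, a piece $\Omega_v \subseteq \Omega$ satisfying
\[
\Omega_v = \bigcup_{e: v \to w} g_e \cdot H(\Omega_w),
\]
where the union runs over edges of the prefix-suffix automaton, $g_e \in \FN$ is the element read off the prefix along $e$, and $H$ is the homothety of ratio $1/\lambda_{\Phi\inv}$ on $\bar T_{\Phi\inv}$. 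Each map $x \mapsto g_e \, H(x)$ is a similarity of the tree with ratio $1/\lambda_{\Phi\inv}$, so the system is a genuine graph directed iterated function system, all of whose contraction ratios coincide.

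\textbf{Step 2: compute the dimension formally.} The Mauldin--Williams matrix at exponent $\delta$ has entries $M(\delta)_{vw} = N_{vw}\,\lambda_{\Phi\inv}^{-\delta}$, where $N_{vw}$ is the number of edges from $v$ to $w$ in the prefix-suffix automaton. By construction of that automaton, $N_{vw}$ equals the number of occurrences of the edge $w$ in $f(v)$, so $N$ is exactly the transition matrix of the train-track representative $\tau$; since $\Phi$ is irreducible with irreducible powers, Perron--Frobenius gives $\rho(N) = \lambda_\Phi$. Hence $\rho(M(\delta)) = \lambda_\Phi \, \lambda_{\Phi\inv}^{-\delta}$, and the Mauldin--Williams equation $\rho(M(\delta)) = 1$ has unique solution
\[
\delta = \frac{\ln \lambda_\Phi}{\ln \lambda_{\Phi\inv}}.
\]

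\textbf{Step 3: upper and lower bounds.} The upper bound is the easy half. The $n$-fold iterate of the decomposition produces a cover of $\Omega$ by on the order of $\lambda_\Phi^n$ sets, each of diameter at most $C \lambda_{\Phi\inv}^{-n}$ because it lies in the $H^n$-image of a bounded fundamental piece in $\bar T_{\Phi\inv}$; a direct computation with the definition of Hausdorff measure then yields $\hdim(\Omega) \leq \delta$. For the lower bound I would transport the Perron--Frobenius eigenmeasure of $N$ (equivalently, the natural $f$-invariant cylinder measure on $\Latt$ assigning mass $\asymp \lambda_\Phi^{-n}$ to a depth-$n$ cylinder) through $\CQ^2$ to a Borel measure $\mu$ on $\Omega$, and apply the mass distribution principle using the uniform diameter estimate $\lambda_{\Phi\inv}^{-n}$.

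\textbf{Main obstacle.} The serious point is the lower bound, that is, a separation statement inside the tree. Mauldin--Williams requires an open set condition, but the pieces $g_e \, H(\Omega_w)$ live in $\bar T_{\Phi\inv}$, where $\CQ^2$ need not be injective and distinct cylinders of $\Latt$ can collapse to the same point of the tree at branch points. The core of the proof will be to show that the overlaps between different pieces are negligible for the measure $\mu$, or equivalently that balls in $\bar T_{\Phi\inv}$ of radius $\lambda_{\Phi\inv}^{-n}$ meet only a bounded number of depth-$n$ pieces. I expect this to follow from the branching structure of $T_{\Phi\inv}$ together with the fact that the prefix-suffix automaton records precisely the turns at which leaves of $\Latt$ separate, so that the failure of injectivity of $\CQ^2$ is controlled by how many legal turns share a given vertex.
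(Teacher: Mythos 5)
Your Steps 1--3 reproduce the paper's strategy exactly: the graph directed system read off the prefix-suffix automaton, the upper bound from the iterated cover (the paper phrases it as a coordinatewise vector inequality against the transition matrix, but it is the same computation), and the lower bound via the mass distribution principle applied to the pushforward $\nu_\Phi$ of the Perron--Frobenius cylinder measure, i.e.\ of the attracting current. The genuine gap is precisely the point you flag as the ``main obstacle'' and then defer: the claim that a ball of radius $\lambda_{\Phi\inv}^{-n}$ meets a bounded number of depth-$n$ pieces. Your proposed mechanism --- that the non-injectivity of $\CQ^2$ is ``controlled by how many legal turns share a given vertex'' --- is not the right one and would not close the argument: the failure of injectivity is a global phenomenon (distinct leaves of $\Latt$ collapsing to the same point of $\bar T_{\Phi\inv}$), and controlling it is the bulk of the paper. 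Three inputs are needed that your sketch does not supply. First, $\CQ$ is finite-to-one with only finitely many orbits of points having more than two preimages; this is a nontrivial theorem quoted from \cite{ch} and requires restricting to non-geometric $\Phi$ (in the geometric case the limit sets are multi-intervals of dimension $1$). Second, one must show that any point lying in two distinct depth-$1$ pieces is fixed by a homothety associated to some automorphism in $\Phi$, and that all its prefix-suffix preimages $\sigma$, together with the sequences $\sigma(\tilde e,\tilde f,n)$, are pre-periodic; this uses the forward-rotationless hypothesis, achieved by passing to a power via Proposition~\ref{prop:rotationless}, together with Proposition~\ref{prop:periodicleaves}. Third, from that periodicity one extracts a uniform gap $\gap(\Omega_{\tilde e,\sigma},\Omega_{\tilde e,\sigma'})\geq C_2\lambda_{\Phi\inv}^{-n}$ between \emph{disjoint} depth-$n$ pieces (Lemma~\ref{lem:lowerbound}): the terminal edges $\sigma(\tilde e,\tilde f,n)$ of the relevant paths range over a finite set independent of $n$, so the gap rescales exactly by the ratio of $H^n$ from a finite list of positive constants.

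Without these steps the bounded-multiplicity estimate, and hence the bound $\nu_\Phi(B(P,r))\leq C_4\,r^\delta$ and the lower bound on the dimension, remain unproved. A secondary omission: you never address the geometric versus non-geometric dichotomy, nor the reduction to a forward rotationless power, both of which the separation argument genuinely requires. Everything else in the proposal is correct and coincides with the paper's proof.
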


Knowing $\delta_{\Phi\inv}$ we can use the Hausdorff measure in dimension
$\delta_{\Phi\inv}$ to describe the correspondence between the unique ergodic
probability measure carried by the attracting lamination and the
metric of the $\R$-tree $T_{\Phi\inv}$.

We insist that the expansion factors of an irreducible (with
irreducible powers) outer automorphism and its inverse are not equal
in general. Surprisingly, this leads to compact subsets  of an $\R$-tree
which can be of Hausdorff dimension strictly bigger than $1$ although
an $\R$-tree is usually regarded as a $1$-dimensional object.

During his beautiful course on the Mapping Class Group at MSRI in Fall
2007, L.~Mosher mentioned that the convex core (see \cite{guir-core})
of the product of the attracting and repelling trees of an irreducible
(with irreducible powers) parageometric automorphism should be of
Hausdorff dimension given by the ratio $\delta_{\Phi\inv}$ of the
logarithms of the expansion factors of the automorphism and its
inverse. This lead us to understand that the limit set of the
repelling tree has the Hausdorff dimension $\delta_{\Phi\inv}$.

The main difficulty in proving our Theorem is to carefully study how the
self-similar pieces of the limit set intersect. This involves
describing the points that belong to more than one piece and proving
that their prefix-suffix representations are periodic.

Finally in Section~\ref{sec:examples} we describe two classical
examples and detail the shape of the limit sets and compact hearts.

\medskip

In the end of this introduction we want to recall two more classical
constructions which are sources of inspiration for our work.

\medskip

The above picture is very different from the situation of
pseudo-Anosov mapping classes which are a source of inspiration for
studying outer automorphims. Indeed, a pseudo-Anosov homeomorphism
$\phi$ of a hyperbolic surface and its inverse have the same expansion
factor.  Recall that the mapping class $\Phi$ of an homeomorphim
$\phi$ of a surface $S$ induces an outer automorphism of the
fundamental group of the surface. And if the surface has non-trivial
boundary, its fundamental group is a free group. The pseudo-Anosov
homeomorphism $\phi$ comes with an unstable foliation
$\mathcal{F}_\phi$ on the hyperbolic surface $S$. Tightening this
foliation we get the unstable geodesic lamination $\mathfrak{L}_\Phi$
of the mapping class $\Phi$ of which the attracting lamination of
$\Phi$ is the algebraic version.  Under iterations of $\phi$, any
closed curve converges to the unstable geodesic lamination.

The mapping class $\Phi$ also acts on Teichmüller space and its
boundary and has a repelling fixed point which can be described as an
$\R$-tree $T_{\Phi\inv}$ with small action of the fundamental group of the
surface. Geometrically the tree $T_{\Phi\inv}$ is transverse to the lift of the
unstable geodesic lamination to the universal cover of the surface.

The limit set $\Omega$ of $T_{\Phi\inv}$ is equal to $T_{\Phi\inv}$
and is a countable union of intervals.  Thus its Hausdorff dimension
is $1$ which is consistent with our Theorem. 

Alternatively, following W.~Thurston \cite{flp}, the pseudo-Anosov
mapping class $\Phi$ fixes a train-track on the surface $S$. This
train-track carries the unstable foliation. The compact sets
$\Omega_{\tilde e}$ (see section~\ref{sec:fractalomega}) for this
train-track are intervals transverse to the foliation. The first
return map $T$ along the unstable foliation, on the union of these
intervals is an interval exchange transformation.

\medskip

Let us now review the above description in the case of an irreducible
(with irreducible powers) outer automorphism $\Phi$ represented by a
substitution $\sigma$. We refer to N.~Pytheas Fogg \cite{fogg} for
background and results on symbolic dynamics.

Let $\Phi$ be an outer automorphism of $\FN$ which admits a basis
$\CA$ of $\FN$ and a representative $\sigma$ which is a substitution
(that is to say, only positive letters appear in the images of the
elements of $\CA$). In this case we rather regard $\sigma$ as an
homomorphism of the free monoid on the alphabet $\CA$. 

Under iterations of $\sigma$, any letter $a\in\CA$ converges to the
attracting subshift $\Sigma_\sigma$. This is the subshift of the full
shift on bi-infinite words in $\CA$ which consists of bi-infinite
words whose finite factors are factors of images of $a$ under
iterations of $\sigma$. Considering the shift map $S$ we get a
symbolic dynamic system $(\Sigma_\sigma,S)$.

This attracting subshift $\Sigma_\sigma$ is the (symbolic)
attracting lamination $\Latt$ of the irreducible (with irreducible
powers) outer automorphism $\Phi$ (more precisely it is half of
$\Latt$ as we fixed, as a convention, that laminations are invariant
by taking inverses). The self-similar decomposition of the attracting
subshift occurs in this case in the basis $\CA$ which is a train-track
for $\Phi$ and is well-known to dynamists.

If in addition, the substitution $\sigma$ satisfies the
arithmetic-type Pisot condition, then the dynamical system
$(\Sigma_\sigma, S)$ has a geometric interpretation as a Rauzy
fractal $\Rauzy$.

The Rauzy fractal $\Rauzy$ is a compact subset of
$\R^{N-1}$. The Rauzy fractal is graphically striking when $N=3$ in
which case it is a compact subset of the plane. The Rauzy fractal
comes with a piecewise exchange $T$. The dynamical system $(\Rauzy,T)$
is semi-conjugated with the attracting subshift $(\Sigma_\sigma,S)$.

Indeed, V.~Canterini and A.~Siegel \cite{cs} defined a map $R$ from
the attracting shif $\Sigma_\sigma$ onto the Rauzy fractal: A
bi-infinite word $Z$ in the attracting subshift $\Sigma_\sigma$
corresponds to the trajectory of exactly one point, $R(Z)$ of
$\Rauzy$. The map $R$ is continuous and onto and therefore $\Rauzy$ is
a geometric representation of the dynamic of the attracting subshift.

The map $R$ factors through the map $\CQ^2$ which means that the Rauzy
fractal is a quotient of the compact limit set $\Omega_\CA$ of the repelling
tree $T_{\Phi\inv}$ of $\Phi$.

The self-similar decomposition of the attracting subshift
$\Sigma_\sigma$, described by the prefix-suffix automaton, is carried
over by the continuous map $R$ to $\Rauzy$.  The self-similar
decomposition of the Rauzy fractal $\Rauzy$ obtained is the same as
the self-similar decomposition of $\Omega_\CA$ described in
Proposition~\ref{prop:heartfrac}.

However, we note that the self-similar decomposition of the Rauzy
fractal does not lead directly to a meaningful Hausdorff dimension
because intersections between pieces may not be neglectable: the map
$R$ is non-injective in a ``Hausdorff-dimension'' essential way.

\noindent\textbf{Acknowledgement:} This research started at the MSRI,
while I was attending the special semester on Geometric Group Theory
organised by M.~Bestvina, J.~McCammond, M.~Sageev, and K.~Vogtmann. I
wish to thank them for the beautiful opportunity they offered us.

If I had not been in California at that time, this paper would
certainly be a joint paper with my favorite co-authors: A.~Hilion and
M.~Lustig.

This work greatly benefited from the insight of X.~Bressaud's PhD
student, Y.~Jullian, who helped me a lot to handle graph directed
constructions and self-similarity tools and concepts.

\section{Laminations and Automorphisms}\label{sec:lamaut}

\subsection{Laminations}\label{sec:laminations}

The free group $\FN$ is Gromov-hyperbolic and has a well defined
boundary at infinity $\partial\FN$, which is a topological space,
indeed a Cantor set.

The action of $\FN$ on its boundary is by homeomorphisms.

The \textbf{double boundary} of $\FN$ is
\[
\partial^2\FN=(\partial\FN)^2\smallsetminus\Delta
\]
where $\Delta$ is the diagonal. An element of $\partial^2\FN$ is a \textbf{line}.

A \textbf{lamination} (in its algebraic setting) is a closed,
$\FN$-invariant, flip-invariant subset of $\partial^2\FN$ (where the
flip is the map exchanging the two coordinates of a line). The
elements of a lamination are called \textbf{leaves}.

We refer the reader to \cite{chl1-I} where laminations for free groups
 are defined and different equivalent approaches are exposed with
 care.

\subsection{Charts and Cylinders}\label{sec:charts-cylinders}

To give a geometric interpretation of the boundary, of leaves and
of laminations we introduce charts.

Let $\Gamma$ be a finite graph, with basepoint $*$ and
$\pi:\FN\to\pi_1(\Gamma,*)$ a \textbf{marking} isomorphism.
We say that $(\Gamma,*,\pi)$ is a \textbf{chart} for $\FN$.

Assigning a positive length to each edge in $\Gamma$ (e.g. $1$ to each
edge) defines a path metric of the universal cover $\tilde\Gamma$.
For such a metric, $\tilde\Gamma$ is a 0-hyperbolic space, indeed a
tree, and it has a boundary at infinity $\partial\tilde\Gamma$ which
is simply the space of ends. Points of the boundary
$\partial\tilde\Gamma$ can be seen as infinite geodesic paths starting
from a fixed lift $\tilde *$ of the base point $*$.
 
The action of $\FN$ on $\tilde\Gamma$ by deck transformations through
the marking $\pi$ is by isometries. We denote by
$\partial\pi:\partial\FN\to\partial\tilde\Gamma$ the canonical
homeomorphism between the boundaries at infinity.

Through $\partial\pi$ there is a canonical correspondence, which
associates to a line $(X,Y)\in\partial^2\FN$ the geodesic bi-infinite
oriented arc of $\tilde\Gamma$ $[\partial\pi(X),\partial\pi(Y)]$ joining the
points at infinity $\partial\pi(X)$ and $\partial\pi(Y)$. We say that
this bi-infinite geodesic path is the \textbf{geometric realisation} of
the line $(X,Y)$.

For a finite oriented geodesic arc $\gamma$ in $\tilde\Gamma$, the
\textbf{cylinder} of $\gamma$ $C_\Gamma(\gamma)$ is the set of
lines whose geometric realisations contain $\gamma$.  

Cylinders are closed-open sets and they form a basis of the topology
of $\partial^2\FN$. An element $u$ of $\FN$ translates by left
multiplication the cylinder $C_\Gamma(\gamma)$ to
$uC_\Gamma(\gamma)=C_\Gamma(u\gamma)$.

\subsection{Automorphisms and topological representatives}\label{sec:autom-topol-repr}

Let $\phi$ be an automorphism of $\FN$. It extends canonically to an
homeomorphism $\partial\phi:\partial\FN\to\partial\FN$ and also
induces an homeomorphism, $\partial^2\phi$ of $\partial^2\FN$.

For example, the inner automorphism $i_u:x\mapsto uxu\inv$, defined by
the conjugation by the element $u$ of $\FN$, acts on $\partial\FN$ as
(the left multiplication by) $u$.

If $L$ is a lamination $\phi(L)=\{(\partial\phi(X),\partial\phi(Y))\
|\ (X,Y)\in L\}$ is also a lamination. As a lamination is invariant
under the action of $\FN$, inner automorphisms act trivially on the
set of laminations, and we get an action of the outer automorphism
group $\Out(\FN)$ on the set of laminations. We consistently denote by
$\Phi(L)=\phi(L)$ the image of $L$ by the outer class $\Phi$ of
$\phi$.

If $(\Gamma,*,\pi)$ is a chart for $\FN$ as in the previous section, a
\textbf{topological representative} of the outer automorphism $\Phi$
is a continuous map $f:\Gamma\to\Gamma$ which sends vertices to
vertices, edges to finite reduced paths, and which is a homotopy
equivalence inducing $\Phi$ through the marking $\pi$.  A lift $\tilde
f:\tilde\Gamma\to\tilde\Gamma$ of $f$ to the universal cover
$\tilde\Gamma$ of $\Gamma$ is a \textbf{topological representative} of
the automorphism $\phi\in\Phi$ if the following condition holds:
\[
\forall P\in\tilde\Gamma, \forall u\in\FN, \tilde f(uP)=\phi(u)\tilde f(P).
\]
If $\psi=i_u\circ\phi$ is another automorphism in the outer class
$\Phi$, then $\tilde f'=u\tilde f$ is a topological representative of $\psi$:
Lifts of $f$ are in one-to-one correspondence with automorphisms in
the outer class $\Phi$.

For any lift $\tilde f$ of the homotopy equivalence $f$ of $\Gamma$,
$\tilde f$ is a quasi-isometry of $\tilde\Gamma$ and extends to an
homeomorphism, $\partial\tilde f$, of the boundary at infinity
$\partial\tilde\Gamma$.

If
$\tilde f$ is a topological representative of the automorphism $\phi$
then the following diagram  commutes:
\[
\xymatrix{ \partial F_N \ar@{->}[r]^{\partial\phi}_\cong
    \ar@{->}[d]^{\partial\pi}_\cong & \partial\FN
    \ar@{->}[d]^{\partial\pi}_\cong\\ 
    \partial\tilde\Gamma
    \ar@{->}[r]^{\partial\tilde f}_\cong & \partial\tilde\Gamma }
\]

For a subset $C$ of $\partial^2\FN$ (e.g. a cylinder
$C_\Gamma(\gamma)$) we abuse of notations and write: $\phi(C)=\tilde
f(C)=\{(\partial\phi(X),\partial\phi(Y))\ |\ (X,Y)\in C\}$.  We note
that the homeomorphism $\partial^2\phi$ maps a cylinder
$C_\Gamma(\gamma)$ to a closed-open set of $\partial^2\FN$ which is a
finite union of cylinders but may fail to be a cylinder.

\subsection{Train-track representatives and legal lamination}\label{sec:train-track-repr}

A \textbf{train-track representative} $\tau=(\Gamma,*,\pi,f)$ of the
outer automorphism $\Phi$ of $\FN$ is a chart $(\Gamma,*,\pi)$
together with a topological representative $f$ of $\Phi$ such that for
all integer $n\geq 1$, $f^n$ is locally injective on each edge of
$\Gamma$. 

The lift $\tilde f$ of $f$ which is the topological representative of the
automorphism $\phi\in\Phi$ is a \textbf{train-track representative} for $\phi$.

The train-track $\tau$ is \textbf{irreducible} if it
contains no vertices of valence $1$ or $2$, and if $\Gamma$ contains no
non-trivial $f$-invariant subgraph.

An outer automorphism $\Phi$ is \textbf{irreducible} (with
irreducible powers) if for each $n$, $\Phi^n$ does not fix a conjugacy
class of a free factor. M.~Bestvina and M.~Handel \cite{bh-traintrack} proved that
irreducible (with irreducible powers) outer automorphisms always have
an irreducible train-track representative.

A geodesic path $\gamma$ in $\tilde\Gamma$ (finite, infinite or
bi-infinite) is \textbf{legal} if for all $n\geq 1$, the restriction
of $\tilde f^n$ to $\gamma$ is injective (this does not depend on the
choice of a particular lift $\tilde f$ of $f$). In particular, from
the definition, every 1-edge path is legal. A line
$(X,Y)\in\partial^2\FN$ is \textbf{legal} if its geometric realisation
$[\partial\pi(X);\partial\pi(Y)]$ is a legal bi-infinite path.

The \textbf{legal lamination} $\Lleg$ of the train-track
$\tau=(\Gamma,*,\pi,f)$ is the set of legal lines. From the definitions it
is clear that
\[
\Phi(\Lleg)\subseteq\Lleg.
\]
Indeed if $\phi\in\Phi$ is an automorphism representing the outer
class $\Phi$, $\partial^2\phi$ sends any legal line to a legal line.

The \textbf{transition matrix} $M$ of the homotopy equivalence
$f$ of the graph $\Gamma$ is the square matrix of size the number
of edges of $\Gamma$, and for each pair $(e,e')$ of edges of $\Gamma$,
the entry $m_{e,e'}$ is the number of occurences of $e$ in the path
$f(e')$. We insist that occurences are positive and are counted
without taking in account orientation.

The \textbf{expansion factor} $\lambda_\Phi$ of the outer automorphism
$\Phi$ is the Perron-Frobenius eigen-value $\lambda_\Phi>1$ of the
transition matrix $M$ of the irreducible train-track representative
$\tau=(\Gamma,*,\pi,f)$ of $\Phi$. The expansion factor does not
depend on the choice of a particular irreducible train-track
representative. We denote by $(\mu_e)_e$ a positive Perron-Frobenius
eigen-vector of the transition matrix $M$. This eigen-vector is unique
up to a mulitplicative constant.

\subsection{Attracting lamination}\label{subsec:attlam}

In \cite{bfh-lam} the attracting lamination of an irreducible (with irreducible powers) outer
automorphism is defined.

Let $\tau=(\Gamma,*,\pi,f)$ be an irreducible train-track
representative of the outer automorphism $\Phi$. Fix an edge $e$ in
the universal cover $\tilde\Gamma$ of $\Gamma$.  The
\textbf{attracting lamination} $\Latt$ of $\Phi$ is the set of leaves
which are limits of sequences of translates of iterated images of $e$:
\[
\Latt=\{(X,Y)\in\partial^2\FN\ |\ \exists\epsilon=\pm 1, \exists
u_n\in\FN, (X,Y)=\lim_{n\to\infty}u_n{\tilde f}^n(e^\epsilon)\}.
\]
Where the sequence of paths $u_n{\tilde f}^n(e)$ converges to the leaf
$(X,Y)$ if the sequence of startpoints converges to $\partial\pi(X)$ and the
endpoints to $\partial\pi(Y)$ in the topological space
$\tilde\Gamma\cup\partial\tilde\Gamma$.

From the definition it is clear that $\Latt$ is closed,
$\FN$-invariant and flip-invariant, indeed a lamination.  Moreover, as
$\tilde f(u_n{\tilde f}^n(e^\epsilon))=\phi(u_n){\tilde
  f}^{n+1}(e^\epsilon)$ the attracting lamination is invariant by $\Phi$.

This definition does not depend on the particular choice of a lift
$\tilde f$ of $f$, and as $\tau$ is irreducible this does not depend
either on the choice of the edge $e$ of $\tilde\Gamma$.

As $\tau$ is a train-track representative, each path ${\tilde f}^n(e)$
is legal and thus the attracting lamination is a sublamination of the
legal lamination $\Lleg$. 

It is proven in \cite{bfh-lam} that the attracting lamination does
only depend on the irreducible (with irreducible powers) outer automorphism $\Phi$ and not on the
choice of the train-track representative $\tau$. It is proven there
that the attracting lamination $\Latt$ is minimal and thus is the
smallest sublamination of the legal lamination $\Lleg$ such that
\[
\Phi(\Latt)=\Latt.
\]

\subsection{Self-similar decomposition of the attracting lamination}\label{sec:self-simil-lam}

Although we noticed that the image of a cylinder by an automorphism is
not in general a cylinder, we describe in this section the image
of legal lines contained in a cylinder.

Let $(X,Y)\in\partial^2\FN$ be a legal line for the train-track
representative $\tau=(\Gamma,*,\pi,f)$ of the outer automorphism
$\Phi$ of $\FN$. The lift $\tilde f$ of $f$ which is a train-track
representative of the automorphism $\phi\in\Phi$ restricts to an
homeomorphism from the geometric realisation
$[\partial\pi(X);\partial\pi(Y)]$ to its image
$[\partial\pi\partial\phi(X);\partial\pi\partial\phi(Y)]$.

As the attracting lamination is made of legal lines, for any (legal)
path $\gamma$ in the universal cover $\tilde\Gamma$ of $\Gamma$ we
have
\[
\phi(C_\Gamma(\gamma)\cap\Latt)\subseteq C_\Gamma(\tilde f(\gamma))\cap\Latt.
\]

For any oriented edge $\tilde e$ in $\tilde\Gamma$ we denote by $C_{\tilde e}$ the set
\[
C_{\tilde e}=C_\Gamma(\tilde e)\cap\Latt
\]

\begin{prop}\label{prop:lamfrac}
  Let $\phi\in\Phi$ be an irreducible (with irreducible powers) automorphism of $\FN$ and $\Phi$ be its
  outer class.  Let $\tau=(\Gamma,*,\pi,f)$ be a train-track
  representative for $\Phi$ and $\tilde f$ a lift of $f$ to the
  universal cover $\tilde\Gamma$ associated to $\phi$. For any edge
  $\tilde e$ of $\tilde\Gamma$
\[
C_{\tilde e}=\biguplus_{(\tilde e',\tilde p,\tilde s)} \phi (C_{\tilde e'}),
\]
the finite disjoint union is taken over triples $(\tilde e',\tilde p,\tilde s)$ such that
$\tilde e'$ is an edge of $\tilde\Gamma$, $\tilde p.\tilde e.\tilde s$ is a reduced path in
$\tilde\Gamma$ and $\tilde f(\tilde e')=\tilde p.\tilde e.\tilde s$.

The above decomposition of $C_{\tilde e}$ does not depend on the choice of a
particular automorphism $\phi$ in the outer class $\Phi$ and of its
associated lift $\tilde f$ of $f$.
\end{prop}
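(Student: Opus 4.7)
The proof rests on one observation: leaves of $\Latt$ are legal, so $\tilde f$ restricts to an honest homeomorphism on the realization of each such leaf, carrying its edge-by-edge decomposition to that of its $\phi$-image with no folding or cancellation between consecutive edge-images. The inclusion $\supseteq$ is then immediate: if $(X',Y')\in C_{\tilde e'}$ then $\tilde e'$ appears as one edge of its realization, so the realization of $\phi(X',Y')$ contains $\tilde f(\tilde e')=\tilde p\cdot\tilde e\cdot\tilde s$ and hence contains $\tilde e$; invariance $\phi(\Latt)\subseteq\Latt$ gives $\phi(X',Y')\in C_{\tilde e}$.

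For $\subseteq$, start with $(X,Y)\in C_{\tilde e}$ and, using $\Phi$-invariance of $\Latt$, choose $(X',Y')\in\Latt$ with $\phi(X',Y')=(X,Y)$. Writing the realization of $(X',Y')$ as a bi-infinite edge sequence $\ldots\tilde e'_{-1}\tilde e'_0\tilde e'_1\ldots$, the realization of $(X,Y)$ is the concatenation $\ldots\tilde f(\tilde e'_{-1})\tilde f(\tilde e'_0)\tilde f(\tilde e'_1)\ldots$; since two consecutive pieces $\tilde f(\tilde e'_i),\tilde f(\tilde e'_{i+1})$ meet only at a vertex, the edge $\tilde e$ sits entirely inside a \emph{unique} $\tilde f(\tilde e'_i)$, and we take $\tilde e'=\tilde e'_i$ with $\tilde p,\tilde s$ the portions of $\tilde f(\tilde e')$ preceding and following $\tilde e$. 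Disjointness of the union packages this same uniqueness the other way: injectivity of $\partial^2\phi$ makes the preimage $(X',Y')$ unique, and the position of $\tilde e$ in its realization then singles out $(\tilde e',\tilde p,\tilde s)$. Finiteness is a projection argument: the triples descend to pairs consisting of an edge $e'$ of $\Gamma$ and an occurrence of the projection $e$ of $\tilde e$ in $f(e')$, of which there are only finitely many, and each lifts uniquely to a triple centred at the prescribed $\tilde e$.

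Finally, for independence, let $\psi=i_u\circ\phi$ with lift $\tilde f'=u\tilde f$. The equivariance $\tilde f(\phi^{-1}(u^{-1})\tilde e')=u^{-1}\tilde f(\tilde e')$ combined with the identity $u\cdot\phi(C)=\phi(\phi^{-1}(u)\cdot C)$ on subsets of $\partial^2\FN$ yields a bijection between the $\phi$-triples and the $\psi$-triples via $\tilde e''=\phi^{-1}(u^{-1})\tilde e'$, $\tilde p'=\tilde p$, $\tilde s'=\tilde s$; a direct computation then gives $\psi(C_{\tilde e''})=\phi(C_{\tilde e'})$, so the two decompositions of $C_{\tilde e}$ coincide. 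The principal subtlety I would be most careful about is the ``unique $\tilde e'_i$ containing $\tilde e$'' step in the second paragraph: it crucially exploits legality, which is exactly the property forbidding any cancellation at vertices when $\tilde f$ is applied edge-by-edge to a leaf of $\Latt$, and without it the prefix/suffix data $(\tilde p,\tilde s)$ would not be well-defined.
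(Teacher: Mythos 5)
Your proposal is correct and follows essentially the same route as the paper: legality gives the edge-by-edge homeomorphism on realizations of leaves, which yields both inclusions and the disjointness (uniqueness of the tile $\tilde f(\tilde e'_i)$ containing $\tilde e$), and your change-of-lift computation $\tilde e''=\phi^{-1}(u^{-1})\tilde e'$ coincides with the paper's ${\phi'}^{-1}(u^{-1})\tilde e'$ since these elements are equal. The only addition is your explicit finiteness argument, which the paper leaves implicit.
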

\begin{proof}
  By the previous remark, any leaf in $\phi(C_{\tilde e'})$ contains the
  edge $\tilde e$ which proves that $\phi (C_{\tilde e'})\subseteq C_{\tilde e}$.

  Conversely, let $(X,Y)$ be a leaf in $C_{\tilde e}$, in particular
  it is a legal leaf in $\Latt$ and there exists a legal leaf
  $(X',Y')$ in $\Latt$ such that $\partial\phi(X')=X$ and
  $\partial\phi(Y')=Y$. The map $\tilde f$ restricts to an
  homeomorphism between the bi-infinite geodesic paths
  $[\partial\pi(X');\partial\pi(Y')]$ and
  $[\partial\pi(X);\partial\pi(Y)]$, and as the former is legal and
  the latter contains the edge $\tilde e$, there exists an edge
  $\tilde e'$ in $[\partial\pi(X');\partial\pi(Y')]$ such that $\tilde
  f(\tilde e')=\tilde p.\tilde e.\tilde s$ contains the edge $\tilde
  e$. Thus, the leaf $(X,Y)$ is in $\phi(C_{\tilde e'})$.

  We now proceed to prove that the union is a disjoint union. Let
  $(\tilde e',\tilde p,\tilde s)$ and $(\tilde e'',\tilde p',\tilde
  s')$ be two triples such that $\tilde f(\tilde e')=\tilde p.\tilde
  e.\tilde s$ and $\tilde f(\tilde e'')=\tilde p'.\tilde e.\tilde s'$.
  Assume that the intersection $\phi(C_{\tilde e'})\cap\phi(C_{\tilde
  e''})$ is non-empty. As $\partial^2\phi$ is a homeomorphism the
  intersection $C_{\tilde e'}\cap C_{\tilde e''}$ is non-empty and let
  $(X,Y)$ be a leaf in the intersection. As before, $(X,Y)$ is legal
  and $\tilde f$ restricts to a homeomorphism between the geometric
  realizations of $(X,Y)$ and its image. The edge $\tilde e$ is in
  both the images of $\tilde e'$ and $\tilde e''$ by this
  homeomorphism and thus $\tilde e'=\tilde e''$. It follows that the
  two tuples are equal and that the union in the Proposition is a
  disjoint union.

  Finally, let $\tilde f'$ be another lift of $f$ and let
  $\phi'\in\Phi$ be the automorphism associated to $\tilde f'$. There
  exists $u\in\FN$ such that $\tilde f'=u\tilde f$ and
  $\phi'=i_u\circ\phi$. For any edge $\tilde e'$ of $\tilde\Gamma$, let
  $\tilde e''={\phi'}\inv(u\inv)\tilde e'$. We have
\[
\tilde f'(\tilde e'')=\tilde f'({\phi'}\inv(u\inv)\tilde e')=\phi'({\phi'}\inv(u\inv))\tilde f'(\tilde e')=u\inv u \tilde f(\tilde e')=\tilde f(\tilde e')
\]
and
\[
\phi'(C_{\tilde e''})=\psi(C_{{\phi'}\inv(u\inv)}\tilde e')=\phi'({\phi'}\inv(u\inv)C_{\tilde e'})=u\inv\phi'(C_{\tilde e'})=\phi(C_{\tilde e'}).
\]
This proves that the decompositions obtained for $\tilde f$ and $\phi$
and for $\tilde f'$ and $\phi'$ are the same.
\end{proof}

\subsection{Prefix-suffix automaton}\label{sec:prefix}

We now use the previous Section to define the prefix-suffix automaton
for a train-track representative of an irreducible (with irreducible
powers) outer automorphism of the free group.  This automaton is a
classical tool in the case of substitutions, see~\cite{cs} and, indeed,
working with a substitution simplifies some technicalities. 

Let $\tau=(\Gamma,*,\pi,f)$ be a train-track representative of the
outer automorphism $\Phi$ of the free group $\FN$.  The
\textbf{prefix-suffix automaton} of $\tau$ is the finite oriented labelled graph
$\Sigma$ whose vertices are edges of $\Gamma$ and such that there is
an edge labelled by $(e',p,e,s)$ from $e$ to $e'$ if and only if the
reduced path $f(e')$ is equal to the reduced path $p.e.s$, where $p$ and $s$ are reduced paths in
$\Gamma$ (the \textbf{prefix} and the \textbf{suffix} respectively). We draw this edge as
\[
e\stackrel{p,s}{\longrightarrow}e'.
\]

An \textbf{$e$-path} $\sigma$ is a (finite or infinite) reduced path
in $\Sigma$ starting at the vertex $e$. The \textbf{length} $|\sigma|$
of an $e$-path $\sigma$ is its number of edges.  We denote by
$\Sigma_e$ the set of finite $e$-paths and by $\partial\Sigma_e$ the
set of infinite $e$-paths.

For a finite or infinite $e$-path $\sigma$ we denote by $\sigma(n)$
its $n$-th vertex (which is an edge of $\Gamma$). We write
$\sigma(0)=e$ and in particular $\sigma(|\sigma|)$ is the terminal
vertex of $\sigma$.

We now fix a lift $\tilde f$ of $f$ which is associated to the
automorphism $\phi\in\Phi$.

Let $\tilde e$ be an edge in the universal cover $\tilde\Gamma$ which
lies above the edge $e$ of $\Gamma$. For an edge
$e\stackrel{p,s}{\longrightarrow}e'$ of $\Sigma$ there exists a unique
edge $\tilde e'$ of $\tilde\Gamma$ which is a lift of $e'$ and such
that $\tilde f(\tilde e')=\tilde p\cdot\tilde e\cdot\tilde s$ where
$\tilde p$ and $\tilde s$ are lifts of $p$ and $s$ respectively.

Let $\sigma$ be an $e$-path and denote by
$e_{n-1}\stackrel{p_{n-1},s_{n-1}}{\longrightarrow}e_n$, its $n$-th
edge for $1\leq n\leq |\sigma|$ (with $e_0=e$).  By induction, for any
$1\leq n\leq |\sigma|$, there exists a unique edge $\tilde e_n$ of
$\tilde\Gamma$ which is a lift of $e_n=\sigma(n)$ and such that
$\tilde f(\tilde e_n)=\tilde p_{n-1}\cdot\tilde e_{n-1}\cdot\tilde
s_{n-1}$ where $\tilde p_{n-1}$ and $\tilde s_{n-1}$ are lifts of $p_{n-1}$ and
$s_{n-1}$ respectively (and $\tilde e_0=\tilde e$). We use the notation
\[
\sigma(\tilde e,\tilde f, n)=\tilde e_n.
\]

Let $\phi$ be the automorphism in the outer class $\Phi$ associated to
$\tilde f$.  For a finite $e$-path $\sigma$, we define
\[
C_{\tilde e,\sigma}=\phi^n(C_{\sigma(\tilde e,\tilde f,n)}).
\] 
We remark that this definition does not depend on the choice of the
automorphism $\phi$ in the outer class $\Phi$ and of the associated
lift $\tilde f$ of $f$.

Applying recursively Propostion~\ref{prop:lamfrac} we get

\begin{prop}\label{prop:declamiterated}
  Let $\Phi$ be an irreducible (with irreducible powers) automorphism of $\FN$ and $\Phi$ be its
  outer class.  Let $\tau=(\Gamma,*,\pi,f)$ be a train-track
  representative for $\Phi$.

  For any edge $\tilde e$ of $\tilde\Gamma$ and any $n\in\N$, 
\[
C_{\tilde e}=\biguplus_{\sigma\in\Sigma_{e}, |\sigma|=n} C_{\tilde e,\sigma} \qed
\]
\end{prop}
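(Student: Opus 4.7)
The plan is to proceed by induction on $n$, with Proposition~\ref{prop:lamfrac} serving as the one-step decomposition we keep iterating. For the base case $n=0$, there is a single $e$-path of length zero (the trivial path at $e$), for which $\sigma(\tilde e,\tilde f,0)=\tilde e$ and hence $C_{\tilde e,\sigma}=\phi^0(C_{\tilde e})=C_{\tilde e}$; the statement is then tautological.

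For the inductive step, assume
\[
C_{\tilde e}=\biguplus_{\sigma\in\Sigma_e,\,|\sigma|=n} \phi^n\bigl(C_{\sigma(\tilde e,\tilde f,n)}\bigr).
\]
Fix such a $\sigma$ with terminal vertex $e_n=\sigma(n)$ and terminal lift $\tilde e_n=\sigma(\tilde e,\tilde f,n)$. Applying Proposition~\ref{prop:lamfrac} to $\tilde e_n$ gives
\[
C_{\tilde e_n}=\biguplus_{(\tilde e_{n+1},\tilde p_n,\tilde s_n)} \phi\bigl(C_{\tilde e_{n+1}}\bigr),
\]
the disjoint union being indexed by triples with $\tilde f(\tilde e_{n+1})=\tilde p_n\cdot\tilde e_n\cdot\tilde s_n$. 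Pushing this forward by $\phi^n$ (which is a bijection of $\partial^2\FN$ and preserves the attracting lamination, so preserves the disjointness of the decomposition) yields
\[
\phi^n(C_{\tilde e_n})=\biguplus_{(\tilde e_{n+1},\tilde p_n,\tilde s_n)} \phi^{n+1}\bigl(C_{\tilde e_{n+1}}\bigr).
\]
Substituting these into the inductive hypothesis replaces the level-$n$ decomposition by a level-$(n+1)$ decomposition indexed by pairs consisting of an $e$-path $\sigma$ of length $n$ together with a triple extending $\tilde e_n$.

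The remaining point is to identify these pairs with $e$-paths of length $n+1$, and this is where I would be most careful. Each triple $(\tilde e_{n+1},\tilde p_n,\tilde s_n)$ above projects to a datum $(e_{n+1},p_n,s_n)$ with $f(e_{n+1})=p_n\cdot e_n\cdot s_n$, i.e.\ to an edge $e_n\stackrel{p_n,s_n}{\longrightarrow}e_{n+1}$ of $\Sigma$; conversely, the uniqueness clause in the very definition of $\sigma(\tilde e,\tilde f,n+1)$ shows that each such downstairs edge lifts back to a unique triple. Thus the indexing set biject with $e$-paths $\sigma'$ of length $n+1$ extending $\sigma$, and for each such $\sigma'$ we have $\sigma'(\tilde e,\tilde f,n+1)=\tilde e_{n+1}$, so $\phi^{n+1}(C_{\tilde e_{n+1}})=C_{\tilde e,\sigma'}$. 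As $\sigma$ ranges over all $e$-paths of length $n$ and $\sigma'$ over its extensions, one obtains every $e$-path of length $n+1$ exactly once, closing the induction.

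Disjointness transfers automatically through each step because $\partial^2\phi^n$ is a homeomorphism, and the two layers of disjoint unions (over $\sigma$ of length $n$, and over extensions) combine into the single disjoint union over $\Sigma_e$. The only real obstacle, therefore, is the bookkeeping in the previous paragraph: matching upstairs triples with edges of $\Sigma$ via the unique-lift property of $\tilde f$. Once that correspondence is laid out explicitly, the proposition reduces to iterating Proposition~\ref{prop:lamfrac}.
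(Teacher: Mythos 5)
Your proof is correct and is exactly the argument the paper intends: the paper dispatches this proposition with the single phrase ``applying recursively Proposition~\ref{prop:lamfrac},'' and your induction --- pushing the one-step decomposition forward by $\phi^n$ and matching the upstairs triples with edges of $\Sigma$ via the unique-lift property --- is precisely the bookkeeping that recursion requires. No gaps.
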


For an infinite $e$-path $\sigma$ we denote by
$C_{\tilde e,\sigma}$ the compact non-empty nested intersection
\[
C_{\tilde e,\sigma}=\bigcap_{\sigma'\mbox{\scriptsize prefix of }\sigma} C_{\tilde e,\sigma'}
\]
and we get

\begin{prop}\label{prop:rhoe}
  Let $\Phi$ be an irreducible (with irreducible powers) outer automorphism of $\FN$ and let
  $\tau=(\Gamma,*,\pi,f)$ be a train-track representative for
  $\Phi$. Let $\tilde e$ be a lift of an edge $e$ of $\Gamma$.

Then
\[
 C_{\tilde e}=\biguplus_{\sigma\in\partial\Sigma_{e}} C_{\tilde e,\sigma}. \qed
\]
\end{prop}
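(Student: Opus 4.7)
The plan is to bootstrap from the finite-level decomposition given by Proposition~\ref{prop:declamiterated} using a standard compactness/inverse-limit argument. First I would verify that each cylinder $C_\Gamma(\gamma)\subseteq\partial^2\FN$ is compact: since $\partial\FN$ is a Cantor set, $C_\Gamma(\gamma)$ is a clopen subset of $\partial\FN\times\partial\FN$ sitting at positive distance from the diagonal, hence compact. Consequently $C_{\tilde e}=C_\Gamma(\tilde e)\cap\Latt$ is compact, and for every finite $e$-path $\sigma'$ the set $C_{\tilde e,\sigma'}=\phi^{|\sigma'|}(C_{\sigma'(\tilde e,\tilde f,|\sigma'|)})$ is a closed, hence compact, subset of $C_{\tilde e}$ by iterated application of Proposition~\ref{prop:lamfrac}.

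For an infinite $e$-path $\sigma$, denote by $\sigma^{(n)}$ its length-$n$ prefix. The sets $C_{\tilde e,\sigma^{(n)}}$ form a decreasing sequence of compact subsets of $C_{\tilde e}$, each of which is non-empty: it appears as one of the pieces in the finite partition of Proposition~\ref{prop:declamiterated}, and the existence of a leaf in the corresponding cylinder reduces to showing that every $C_{\tilde e''}$ for $\tilde e''$ an edge of $\tilde\Gamma$ is non-empty, which follows from minimality of $\Latt$ together with irreducibility of the train-track (every edge is crossed by some, hence every $\FN$-orbit of leaves of $\Latt$). By the finite intersection property, $C_{\tilde e,\sigma}$ is a non-empty compact set, justifying the very definition that precedes the proposition.

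It remains to establish the disjoint decomposition. Given a leaf $(X,Y)\in C_{\tilde e}$, Proposition~\ref{prop:declamiterated} at each level $n$ produces a unique finite $e$-path $\sigma_n$ of length $n$ with $(X,Y)\in C_{\tilde e,\sigma_n}$; uniqueness forces $\sigma_n$ to be the length-$n$ prefix of $\sigma_{n+1}$, so these assemble into a unique $\sigma\in\partial\Sigma_e$ with $(X,Y)\in C_{\tilde e,\sigma}$. Conversely each $C_{\tilde e,\sigma}$ is contained in $C_{\tilde e,\sigma^{(0)}}=C_{\tilde e}$. If $\sigma\neq\sigma'$ in $\partial\Sigma_e$, then $\sigma^{(n)}\neq{\sigma'}^{(n)}$ for some $n$ and the disjointness clause of Proposition~\ref{prop:declamiterated} gives $C_{\tilde e,\sigma}\cap C_{\tilde e,\sigma'}\subseteq C_{\tilde e,\sigma^{(n)}}\cap C_{\tilde e,{\sigma'}^{(n)}}=\emptyset$.

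The only delicate point in this plan is the non-emptiness of the finite-level pieces $C_{\tilde e,\sigma'}$, which underpins both the legitimacy of the definition of $C_{\tilde e,\sigma}$ and the compactness argument. Once that is granted via irreducibility and minimality of $\Latt$, the rest is a routine inverse-limit identification of $C_{\tilde e}$ with $\partial\Sigma_e$ through the nested cylinder filtration.
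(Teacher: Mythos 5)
Your proposal is correct and is essentially the argument the paper intends: the paper states this proposition with no separate proof, treating it as immediate from Proposition~\ref{prop:declamiterated} together with the definition of $C_{\tilde e,\sigma}$ as the ``compact non-empty nested intersection'' over finite prefixes, and your writeup simply makes explicit the compactness, non-emptiness (via irreducibility and minimality of $\Latt$), and the inverse-limit identification that the paper leaves to the reader.
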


We denote by $\rho_{\tilde e}:C_{\tilde e}\to\partial\Sigma_{e}$ the
continuous onto map, which maps any leaf $(X,Y)$ in $C_{\tilde e}$ to
the unique infinite $e$-path $\sigma$ in $\partial\Sigma_{e}$ such
that $(X,Y)\in C_{\tilde e,\sigma}$.

The infinite $e$-path $\rho_{\tilde e}(X,Y)$ is the
\textbf{prefix-suffix representation} of the leaf $(X,Y)$ with respect
to its edge $\tilde e$.

Fixing a lift $\tilde f$ of $f$ and its associated automorphism
$\phi\in\Phi$, the action of $\phi$ on prefix-suffix representations
is easy to describe:

\begin{lem}\label{lem:phirho}
  Let $\tilde e_1$ be an edge of $\tilde\Gamma$. Let $(X,Y)$ be a leaf
  in $C_{\tilde e_1}$. Let $\tilde e_0$ be an edge of $\tilde\Gamma$
  such that $\tilde f(\tilde e_1)=\tilde
  p_0\cdot \tilde e_0\cdot\tilde s_0$ contains $\tilde e_0$.  Then
\[
\rho_{\tilde
  e_0}(\partial^2\phi(X,Y))=(e_0\stackrel{p_0,s_0}{\longrightarrow}e_1)\cdot\rho_{\tilde
  e_1}(X,Y)
\]
where $e_0,e_1,p_0$, and $s_0$ are the projections in $\Gamma$ of
$\tilde e_0,\tilde e_1,\tilde p_0$, and $\tilde s_0$ respectively.
\qed
\end{lem}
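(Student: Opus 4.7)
The plan is to unfold the infinite path $\rho_{\tilde e_0}(\partial^2\phi(X,Y))$ edge by edge using Proposition~\ref{prop:lamfrac} and its iterate Proposition~\ref{prop:declamiterated}. First I would check that $\partial^2\phi(X,Y)$ really lies in $C_{\tilde e_0}$, so that $\rho_{\tilde e_0}$ applies. Since $(X,Y)\in C_{\tilde e_1}\subseteq\Latt$ is legal, $\tilde f$ restricts to a homeomorphism from the geometric realisation of $(X,Y)$ onto that of $\partial^2\phi(X,Y)$; the edge $\tilde e_1$ sits in the former, so $\tilde f(\tilde e_1)=\tilde p_0\cdot\tilde e_0\cdot\tilde s_0$, and in particular $\tilde e_0$, sits in the latter.

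Next I would read off the first edge of $\rho_{\tilde e_0}(\partial^2\phi(X,Y))$ from the decomposition
\[
C_{\tilde e_0}=\biguplus_{(\tilde e',\tilde p,\tilde s)}\phi(C_{\tilde e'}),
\]
where the union runs over triples with $\tilde f(\tilde e')=\tilde p\cdot\tilde e_0\cdot\tilde s$. The hypothesis exhibits $(\tilde e_1,\tilde p_0,\tilde s_0)$ as one such triple, and since $(X,Y)\in C_{\tilde e_1}$ we have $\partial^2\phi(X,Y)\in\phi(C_{\tilde e_1})$. Because the union is disjoint, this identifies the first edge of the prefix-suffix representation as $e_0\stackrel{p_0,s_0}{\longrightarrow}e_1$.

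For the remaining edges I would argue by induction. Set $\sigma'=\rho_{\tilde e_1}(X,Y)$ and $\sigma=(e_0\stackrel{p_0,s_0}{\longrightarrow}e_1)\cdot\sigma'$. Writing $\sigma_n$ for the length-$(n+1)$ prefix of $\sigma$ and $\sigma'_n$ for the length-$n$ prefix of $\sigma'$, the recursive definition of $\sigma(\tilde e,\tilde f,\cdot)$ together with the uniqueness of lifts gives $\sigma_n(\tilde e_0,\tilde f,n+1)=\sigma'_n(\tilde e_1,\tilde f,n)$; the base case $n=0$ is precisely the equation $\tilde f(\tilde e_1)=\tilde p_0\cdot\tilde e_0\cdot\tilde s_0$, and the inductive step reduces to the analogous assertion one level up. From this identification,
\[
C_{\tilde e_0,\sigma_n}=\phi^{n+1}\bigl(C_{\sigma_n(\tilde e_0,\tilde f,n+1)}\bigr)=\phi\bigl(\phi^n(C_{\sigma'_n(\tilde e_1,\tilde f,n)})\bigr)=\phi(C_{\tilde e_1,\sigma'_n}).
\]
Since $(X,Y)\in C_{\tilde e_1,\sigma'_n}$ for every $n$, one gets $\partial^2\phi(X,Y)\in C_{\tilde e_0,\sigma_n}$ for every $n$, and Proposition~\ref{prop:declamiterated} then forces $\rho_{\tilde e_0}(\partial^2\phi(X,Y))=\sigma$.

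The only subtle point is the bookkeeping on lifts in $\tilde\Gamma$: one must make sure that the unique lift of $e_1$ whose image under $\tilde f$ contains $\tilde e_0$ is exactly the $\tilde e_1$ of the statement (and that this coherence propagates through the recursion defining $\sigma(\tilde e,\tilde f,n)$). This is just the content of the uniqueness clauses already used in Section~\ref{sec:prefix} to define $C_{\tilde e,\sigma}$, but it is where the passage from $f$ to the chosen lift $\tilde f$ genuinely enters.
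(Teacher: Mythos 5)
Your argument is correct: the identification of the first edge via the disjoint union in Proposition~\ref{prop:lamfrac}, the index-shift identity $\sigma_n(\tilde e_0,\tilde f,n+1)=\sigma'_n(\tilde e_1,\tilde f,n)$ coming from uniqueness of lifts, and the resulting equality $C_{\tilde e_0,\sigma_n}=\phi(C_{\tilde e_1,\sigma'_n})$ all check out. The paper states this lemma without proof as an immediate consequence of the definitions, and your write-up is precisely the intended verification.
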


Roughly speaking this Lemma means that the action of $\phi$ on
prefix-suffix representations is by the shift map. This can be made
precise in the case of subsitutions. Indeed if $\sigma$ is a
substitution, in the basis $\CA$ of $\FN$, in the outer class $\Phi$
then the rose with $N$ petals is a a train-track representative for
$\Phi$. The universal cover $\tilde\Gamma$ is the Cayley graph of
$\FN$ and instead of the attracting lamination $L_\Phi$ we consider
the attracting subshift which consists of bi-infinite words in the
alphabet $\CA$. Such a bi-infinite word $Z$ encodes a bi-infinite
indexed path in $\tilde\Gamma$ that contains the origin. Thus $Z$
belongs to one of the cylinders $C_a$ where $a\in\CA$ is the letter at
index one in $Z$. Its prefix-suffix representation is computed with
respect to this cylinder.

These classical conventions in the case of subsitutions make the above
discussion on self-similarity of cylinders of the attracting
lamination and the description of the prefix-suffix automaton much
simpler.

\subsection{Prefix-suffix representation of periodic leaves}

In this section we continue our study of the prefix-suffix automaton.

\begin{prop}\label{prop:rhofinite}
Let $\Phi$ be an irreducible (with irreducible powers) outer automorphism of $\FN$ and let
  $\tau=(\Gamma,*,\pi,f)$ be a train-track representative for
  $\Phi$. Let $\tilde e_0$ be a lift of an edge $e_0$ of $\Gamma$ and
  let $\sigma$ be an infinite $e_0$-path in $\partial\Sigma_{e_0}$.

Then the compact set $C_{\tilde e_0,\sigma}$ is finite.
\end{prop}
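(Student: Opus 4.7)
My plan is to perform a case analysis on the asymptotic behavior in $\tilde\Gamma$ of the nested sequence of finite paths $\gamma_n := \tilde f^n(\tilde e_n)$, where $\tilde e_n = \sigma(\tilde e_0,\tilde f,n)$. Every leaf in $C_{\tilde e_0,\sigma}$ contains each $\gamma_n$ in its geometric realization with $\tilde e_0$ in a position prescribed by $\sigma$. Let $L_n$ and $R_n$ denote the left and right endpoints of $\gamma_n$. By the Perron--Frobenius relation, the length of $\gamma_n$ (with edge lengths $(\mu_e)_e$) equals $\lambda_\Phi^n\mu_{e_n}$, which tends to infinity, so at least one of the sequences $(L_n)$, $(R_n)$ must leave every compact set of $\tilde\Gamma$. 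More precisely, since each transition $\gamma_n \subset \gamma_{n+1}$ extends the left (resp.\ right) end by $\tilde f^n(\tilde p_n)$ (resp.\ $\tilde f^n(\tilde s_n)$) and these extensions have length at least $\lambda_\Phi^n\mu_{\min}$ whenever nonempty, $(L_n)$ stays bounded if and only if $\tilde p_n = \emptyset$ for all large $n$, and similarly for $(R_n)$.

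If both endpoints escape to infinity, then $\bigcup_n \gamma_n$ is a single bi-infinite geodesic, and being the limit of iterates of edges under $\tilde f$, it is a leaf of $\Latt$; hence $C_{\tilde e_0,\sigma}$ has exactly one element. Otherwise, assume without loss of generality that $L_n$ stabilizes at some $L^*\in\tilde\Gamma$ for $n\geq N$ (so $p_n = \emptyset$ for $n\geq N$), while $R_n\to\infty$. Every leaf of $C_{\tilde e_0,\sigma}$ then shares the same half-infinite right tail emanating from $L^*$, and the problem reduces to bounding the number of possible extensions of this half-leaf to a bi-infinite leaf of $\Latt$ on the left of $L^*$.

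Next I would show that the projection $v^* := \pi_\Gamma(L^*)$ is an $f$-periodic vertex of $\Gamma$. Indeed, letting $\ell_n$ denote the left endpoint of $\tilde e_n$ and $v_n = \pi_\Gamma(\ell_n)$, the relation $\tilde f(\ell_{n+1})=\ell_n$ (valid for $n\geq N$ since $\tilde p_n=\emptyset$) implies $v_N \in \bigcap_{k\geq 0} f^k(V(\Gamma))$; this intersection is finite and $f$-invariant, and $f$ restricts on it to a bijection, so $v_N$ (and therefore $v^*$) is periodic under $f$. Replacing $\phi$ by a suitable power, we may assume $\phi$ fixes $v^*$ (together with $L^*$, after adjusting by a group element), so $\phi$ acts on the finite set of directions (edge-germs) at $v^*$.

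The main obstacle is bounding the number of leaves of $\Latt$ through $L^*$ extending the prescribed right half to the left. This uses the standard structure at a periodic vertex of a train-track: only the finitely many $Df$-periodic directions at $v^*$ can arise as germs of leaves of $\Latt$ there, which gives finitely many candidates for the leftmost outgoing edge from $L^*$. Iterating the same argument at successive periodic vertices encountered to the left, and invoking that leaves of $\Latt$ sharing a half-leaf must be $\phi^k$-periodic (for some $k$ depending on the period), one identifies the leaves in $C_{\tilde e_0,\sigma}$ with a finite set of $\phi^k$-periodic leaves of $\Latt$ passing through $L^*$. Since $\Latt$ contains only finitely many leaves fixed by any given power of $\Phi$ (the "singular" leaves, arising from the periodic Nielsen structure of $f$), the set $C_{\tilde e_0,\sigma}$ is finite.
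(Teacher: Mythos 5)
Your reduction to two cases is sound and coincides with the paper's: the dichotomy between both endpoints of $\gamma_n=\tilde f^n(\tilde e_n)$ escaping to infinity versus one of them stabilizing is exactly the paper's, and your treatment of the first case (the nested paths converge to a single bi-infinite geodesic, so $C_{\tilde e_0,\sigma}$ is a singleton) is correct. The gap is in the second case. Noting that there are finitely many candidate germs at $L^*$ (periodic or not --- the finite valence already gives this, so the appeal to $Df$-periodic directions buys nothing) and then ``iterating at successive periodic vertices to the left'' does not bound the number of leaves: you choose a leftward extension infinitely many times, and finitely many choices at each step is perfectly compatible with uncountably many infinite extensions. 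The whole weight of your argument therefore rests on the claim that distinct leaves of $\Latt$ sharing a half-leaf are periodic under a fixed power of $\phi$, combined with finiteness of the set of such periodic leaves. That claim is not proved in your proposal, and it is not available at this point of the paper: the closest statement (Proposition~\ref{prop:singularperiodic}) is deduced from Proposition~\ref{prop:periodicleaves}, whose proof uses Proposition~\ref{prop:rhofinite} --- the very statement you are proving. As written, your argument is therefore either circular or relies on an unproved structural theorem about singular leaves of $\Latt$ of comparable depth to the target statement.

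The paper handles the second case by a direct counting argument that needs no periodicity. Every leaf of $C_{\tilde e_0,\sigma}$ contains the ray from the stabilized vertex $\tilde v$ towards $\partial\pi(Y)$; for each $n$, its image under $\partial^2\phi^{-n}$ is a legal line through $\tilde e_n$, hence extends $\tilde e_n$ on the left by one of the boundedly many (by the number of edges of $\Gamma$) legal edges $\tilde e'_n$ ending at $\tilde v_n$, so the leaf itself contains $\tilde f^n(\tilde e'_n\cdot\tilde e_n)$. These choices are compatible as $n$ grows and organize into boundedly many nested sequences of paths, each converging to a single leaf $(X_k,Y)$; hence the cardinality of $C_{\tilde e_0,\sigma}$ is bounded by the number of edges of $\Gamma$. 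To repair your write-up you would either need to supply an independent, non-circular proof of the half-leaf periodicity statement, or simply replace the second half of your argument by this counting step.
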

\begin{proof}
  Fix an automorphism $\phi\in\Phi$ and an associated lift $\tilde f$
  of $f$ to $\tilde\Gamma$.  For each $n$, let $\tilde
  e_n=\sigma(\tilde e_0,\tilde f,n)$.

The length of the nested reduced path $\tilde f^n(\tilde e_n)$ goes to
infinity. There are two cases:

Either both extremities of the reduced path $\tilde f^n(\tilde e_n)$
goes to infinity in $\partial\tilde\Gamma$ and then $C_{\tilde
e,\sigma}$ contains only one element: the limit of
these paths.

Or, one of the extremities (say the initial one by symmetry) converges
to a vertex $\tilde v$ inside $\tilde\Gamma$. Let $\tilde v_n$ be the
initial vertex of $\tilde e_n$, for $n$ big enough, $\tilde f^n(\tilde
v_n)=\tilde v$.  As the length of the nested reduced path $\tilde
f^n(\tilde e_n)$ goes to infinity the terminal vertices of these paths
converge to a point $\partial\pi(Y)\in\partial\tilde\Gamma$.

For each $n$, let $E_n$ be the set of edges $\tilde e'_n$ in $\tilde
\Gamma$ such that $\tilde e'_n.\tilde e_n$ is a legal reduced path in
$\tilde\Gamma$. The cardinality of $E_n$ is bounded above by the
number of edges of $\Gamma$. For each $n$, and for each leaf $(X,Y)$
in $C_{\tilde e,\sigma}$, the leaf $\partial^2\phi^{-n}(X,Y)$ belongs
to $C_\Gamma(\tilde e_n)$ and thus to one of the $C_\Gamma(\tilde
e'_n\cdot\tilde e_n)$ for an $\tilde e'_n$ in $E_n$. Thus we can write
\[
C_{\tilde e,\sigma}\subseteq\bigcup_{\tilde e'_n\in E_n} C_\Gamma(\tilde f^n(\tilde
e'_n.\tilde e_n)).
\]
We can order each of the $E_n=\{\tilde e^1_n,\tilde
e^2_n,\ldots,\tilde e^{r_n}_n\}$ such that the reduced finite paths
$(\tilde f^n(\tilde e^k_n))_{n\in\N}$ are nested. For $n$ big enough
the terminal vertex of $\tilde f^n(\tilde e^k_n)$ is $\tilde v$ while
the lengths of these nested paths go to infinity and thus their
initial vertices converge to a point $X_k\in\partial\tilde\Gamma$. We
get that the sequence of nested paths $(\tilde f^n(\tilde e^k_n.\tilde
e_n))_{n\in\N}$ converges to a leaf $(X_k,Y)$ in $C_{\tilde
  e,\sigma}$.  This proves that the cardinality of $C_{\tilde
  e,\sigma}$ is bounded above by the number of edges of $\Gamma$.
\end{proof}

The prefix-suffix representations of periodic leaves of the attracting
lamination $\Latt$ have been described by Y.~Jullian in his PhD thesis
\cite{jull-these} where he obtains the following result. We give here
a proof because he only considers substitution automorphisms instead
of general train-tracks but this is only a technical and minor
improvement.

\begin{prop}[\cite{jull-these}]\label{prop:periodicleaves}
  Let $\Phi$ be an irreducible (with irreducible powers) outer automorphism of $\FN$ and let
  $\tau=(\Gamma,*,\pi,f)$ be a train-track representative for
  $\Phi$. Let $\phi$ be an automorphism in the outer class $\Phi$ and
  let $\tilde f$ be the associated lift of $f$ to $\tilde\Gamma$.  Let
  $\tilde e_0$ be a lift of an edge $e_0$ of $\Gamma$.  Let $(X,Y)$ be
  a leaf in $C_{\tilde e_0}$ and let $\sigma=\rho_{\tilde e_0}(X,Y)$
  be its prefix-suffix representation.

  The leaf $(X,Y)$ is periodic under the action of $\partial^2\phi$ if
  and only if its prefix-suffix representation $\sigma$ and the
  sequence $(\sigma(\tilde e_0,\tilde f, n))_{n\in\N}$ are
  pre-periodic.
\end{prop}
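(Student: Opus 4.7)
Write $L_n=\partial^2\phi^{-n}(X,Y)$ and $\tilde e_n=\sigma(\tilde e_0,\tilde f,n)$. Iterating Lemma~\ref{lem:phirho} gives $L_n\in C_{\tilde e_n}$ with prefix-suffix representation $\sigma^{(n)}$, the shift of $\sigma$ by $n$; thus, through $\rho$, the map $\partial^2\phi^{-1}$ corresponds to the shift on prefix-suffix paths combined with the edge shift $\tilde e_n\mapsto\tilde e_{n+1}$.

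For the ``pre-periodic $\Rightarrow$ periodic'' direction, choose common indices $m\geq 0$ and $p\geq 1$ with $\sigma^{(m+p)}=\sigma^{(m)}$ and $\tilde e_{m+p}=\tilde e_m$. The sets $C_{\tilde e_m,\sigma^{(m)}}$ and $C_{\tilde e_{m+p},\sigma^{(m+p)}}$ then coincide, and the shift description above shows that $\partial^2\phi^{-p}$ sends this set into itself. By Proposition~\ref{prop:rhofinite} it is finite, and $\partial^2\phi^{-p}$ is globally injective, so its restriction is a permutation of a finite set. Some power $\partial^2\phi^{-pr}$ fixes $L_m\in C_{\tilde e_m,\sigma^{(m)}}$; applying $\partial^2\phi^m$ to both sides yields $\partial^2\phi^{-pr}(X,Y)=(X,Y)$.

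For the converse, assume $\partial^2\phi^d(X,Y)=(X,Y)$ and study $F=\tilde f^d$ on the bi-infinite geodesic $\gamma=[\partial\pi(X);\partial\pi(Y)]$. Equip $\tilde\Gamma$ with the Perron-Frobenius metric $(\mu_e)_e$, in which $\tilde f$ multiplies legal-path lengths by $\lambda_\Phi$. The geodesic $\gamma$ is legal (since $\Latt\subseteq\Lleg$) and $\partial\tilde f^d$ fixes both its endpoints, so $F(\gamma)$ is a legal bi-infinite path with the same endpoints as $\gamma$ and therefore equals $\gamma$. Hence $F|_\gamma$ is a continuous orientation-preserving bijection of $\gamma\cong\R$ expanding by $\lambda_\Phi^d>1$, with a unique fixed point $P^*\in\gamma$. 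On the subsequence $n=md$ one has $L_{md}=(X,Y)$ and $\tilde e_{md}\subseteq\gamma$; the relation $\tilde f^d(\tilde e_{(m+1)d})\supseteq\tilde e_{md}$ forces $\tilde e_{(m+1)d}$ to be the edge of $\gamma$ containing the sub-interval $F^{-1}(\tilde e_{md})$. Iterating, $\tilde e_{md}\supseteq F^{-m}(\tilde e_0)$, and these sub-intervals shrink to $\{P^*\}$. For $m$ large $\tilde e_{md}$ takes values in the finite set of edges of $\gamma$ incident to $P^*$; since the transition $\tilde e_{md}\mapsto\tilde e_{(m+1)d}$ is deterministic, the subsequence $(\tilde e_{md})_m$ is eventually periodic, whence so is $(\tilde e_n)_n$. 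For large $m$ the shift $\sigma^{(md)}$ is the unique prefix-suffix representation of $(X,Y)$ with respect to $\tilde e_{md}$, so pre-periodicity of $\tilde e_{md}$ forces pre-periodicity of $\sigma$.

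The delicate step is the dynamical input in the forward direction: identifying $F|_\gamma$ as a genuine self-bijection of $\gamma$ (so that $F^{-1}$ really is defined and contracting, with no back-tracking detours) and, when the fixed point $P^*$ happens to be a vertex of $\tilde\Gamma$, exploiting the deterministic transition on the finitely many incident edges to upgrade boundedness of $(\tilde e_{md})_m$ to genuine pre-periodicity rather than mere recurrence.
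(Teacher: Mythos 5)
Your proof is correct and follows essentially the same route as the paper: the ``pre-periodic $\Rightarrow$ periodic'' direction rests on the finiteness of $C_{\tilde e,\sigma}$ (Proposition~\ref{prop:rhofinite}) together with injectivity of $\partial^2\phi$, and the converse rests on locating the fixed point of $\tilde f^d$ on the axis $\gamma$ --- your unique fixed point $P^*$ of the expanding map $F|_\gamma$ is exactly the paper's dichotomy between a fixed vertex and the unique edge that is non-extremal in its own image. Your backward-iteration and determinism phrasing is, if anything, slightly more explicit than the paper's direct exhibition of the tail $\sigma_0\cdot\sigma_1\cdot\sigma_1\cdots$, but the underlying ideas coincide.
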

\begin{proof}
  Let $n$ be such that $\partial\phi^n(X)=X$ and $\partial\phi^n(Y)=Y$
  then $\tilde f^n$ restricts to an orientation preserving
  homeomorphism of the geometric realisation of the leaf $(X,Y)$. For
  each edge $e$ of $\Gamma$, the length of the path $f^k(e)$ increases
  to infinity with $k$. Thus either $\tilde f^n$ fixes a vertex of the
  bi-infinite path $[\partial\pi(X),\partial\pi(Y)]$ or there exists a
  unique edge $\tilde e_1$ of $[\partial\pi(X),\partial\pi(Y)]$ such
  that $\tilde e_1$ is a non-extremal edge of $\tilde f^n(\tilde
  e_1)$. In the first case we choose for $\tilde e_1$ the edge of
  $[\partial\pi(X),\partial\pi(Y)]$ which starts from the fixed vertex
  and lies in the same direction as $\tilde e_0$.  In both cases
  $\tilde f^n(\tilde e_1)$ contains $\tilde e_1$ and there exists
  $k_0$ such that $\tilde f^{nk_0}(\tilde e_1)$ contains $\tilde e_0$.

  Let $e_0$ and $e_1$ be the images of $\tilde e_0$ and $\tilde e_1$
  (respectively) in $\Gamma$.  Let $\sigma_0$ be the  finite
  $e_0$-path finishing at $e_1$ which corresponds to the fact that
  $\tilde e_0$ is an edge of $\tilde f^{nk_0}(\tilde e_1)$. Let
  $\sigma_1$ be the finite $e_1$-loop in $\Sigma_{e_1}$ which
  corresponds to the fact that $\tilde e_1$ is an edge of $\tilde
  f^{n}(\tilde e_1)$. Then
\[
\sigma=\rho_{\tilde e_0}(X,Y)=\sigma_0\cdot\sigma_1\cdot\sigma_1\cdot\sigma_1\cdots
\]
Moreover $\sigma_0(\tilde e_0,\tilde f, nk_0)=\tilde e_1$ and
$\sigma_1(\tilde e_1,\tilde f,n)=\tilde e_1$, which proves that the
sequence $(\sigma(\tilde e_0,\tilde f, n))_{n\in\N}$ is pre-periodic.

Conversely, assume that the prefix-suffix representation of the leaf
$(X,Y)$ is pre-periodic:
\[
\sigma=\rho_{\tilde e_0}(X,Y)=\sigma_0\cdot\sigma_1\cdot\sigma_1\cdot\sigma_1\cdots
\]
where $\sigma_0$ is a finite reduced path in $\Sigma_{e_0}$ finishing
at $e_1$ and $\sigma_1$ is a finite loop in $\Sigma_{e_1}$.  Assume
that $\sigma_0(\tilde e_0,\tilde f, |\sigma_0|)=\tilde e_1$ and
$\sigma_1(\tilde e_1,\tilde f,|\sigma_1|)=\tilde e_1$.

Then, applying Lemma~\ref{lem:phirho} we get that for all $n$
\[
\rho_{\tilde
e_1}(\partial^2\phi^{|\sigma_0|}(X,Y))=\sigma_1\cdot\sigma_1\cdot\sigma_1\cdots=\rho_{\tilde
e_1}(\partial^2\phi^{|\sigma_0|+n|\sigma|}(X,Y))
\]
From Proposition~\ref{prop:rhofinite}, the set $C_{\tilde e_1,\sigma'}$,
with
$\sigma'=\sigma_1\cdot\sigma_1\cdot\sigma_1\cdots$,
is finite and we get that there exists $m\neq n$ such that
\[
\partial^2\phi^{|\sigma_0|+m|\sigma_1|}(X,Y)=\partial^2\phi^{|\sigma_0|+n|\sigma_1|}(X,Y).
\]
This proves that $(X,Y)$ is periodic under the action of $\partial^2\phi$.
\end{proof}

\section{Repelling tree}\label{sec:tree}

We refer to K.~Vogtmann \cite{vogt-survey} for a survey and further
references on Outer Space and actions of the free group on $\R$-trees.

\subsection{Definition}\label{sec:repel-tree}

Let $\Phi$ be an irreducible (with irreducible powers) outer automorphism of $\FN$. The action of
$\Phi$ on the compactification of the projectivized Culler-Vogtman Outer
Space $\barCVN$ has exactly two fixed points $[T_\Phi]$ and $[T_{\Phi\inv}]$, one
attracting and one repelling. The action of $\Phi$ on $\barCVN$ has
North-South dynamic (see \cite{ll-north-south}).  The $\R$-trees $T_\Phi$
and $T_{\Phi\inv}$ have been described in \cite{gjll}.  The isometric actions
of $\FN$ on the $\R$-trees $T_\Phi$ and $T_{\Phi\inv}$ are both minimal, very
small and with dense orbits.

We will focus in this article on the repelling fixed point
$T_{\Phi\inv}$ of $\Phi$. We note that (the metric of) this tree is
only defined up to a multiplicative constant. But in this paper, we
pick-up a particular tree $T_{\Phi\inv}$ in the projective class
$[T_{\Phi\inv}]$

If we choose an automorphism $\phi$ in the outer class
$\Phi$, there exists a homothety, $H$ on $T_{\Phi\inv}$ which is
\textbf{associated} to $\phi$ (see \cite{gjll}):
\[
\forall P\in T_{\Phi\inv}, \forall u\in\FN, H(uP)=\varphi(u)H(P).
\]
The fixed point of the homothety $H$ may be in the metric completion $\bar
T_{\Phi\inv}$ rather than in $T_{\Phi\inv}$, and we regard $H$ as defined on this metric
completion.

With this convention, as $T_{\Phi\inv}$ is the repelling tree of $\Phi$, $H$ is
a contracting homothety of ratio
\[
\lambda=\frac{1}{\lambda_{\Phi\inv}}<1
\]
where $\lambda_{\Phi\inv}$ is the expansion factor of the irreducible (with irreducible powers) outer
automorphism $\Phi\inv$.

\subsection{The map $\CQ$}\label{subsec:Q}

Under the hypothesis of the previous section, $T_{\Phi\inv}$ is an
$\R$-tree with a minimal, very small action of $\FN$ by isometries
with dense orbits. We denote by $\hat T_{\Phi\inv}=\bar
T_{\Phi\inv}\cup \partial T_{\Phi\inv}$ the union of its metric
completion and of its Gromov boundary. The space $\hat T_{\Phi\inv}$
comes with the topology induced by the metric on
$T_{\Phi\inv}$. However, we consider the weaker \textbf{observers'
  topology} on $\hat T_{\Phi\inv}$. We refer to \cite{chl2} for
details on this topology. A basis of open sets is given by the
\textbf{directions}: a direction is a connected component of $\hat
T_{\Phi\inv}\smallsetminus\{P\}$ where $P$ is any point of $\hat
T_{\Phi\inv}$. We denote by $\TPhiinvobs$ the set $\hat T_{\Phi\inv}$
equipped with the observers' topology. The space $\TPhiinvobs$ is
Hausdorff, compact and has the same connected components than $\hat
T_{\Phi\inv}$. Indeed it is a dendrite in B.~Bowditch \cite{bowd-tree}
terminology.

\begin{thm}[\cite{chl2}]\label{thm:Qexists}
For any point $P\in \bar T_{\Phi\inv}$, the map $\CQ_P:\FN\to\TPhiinvobs
, u\mapsto uP$ has a unique equivariant continuous extension to a map
$\CQ:\partial\FN\to\TPhiinvobs$. This extension is independent of the
choice of the point $P$.
\end{thm}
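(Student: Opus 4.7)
The plan is to define $\CQ(X)$ pointwise for each $X \in \partial\FN$ as the limit of $u_n P$ along any sequence $u_n \in \FN$ with $u_n \to X$, using compactness of $\TPhiinvobs$ to produce a candidate limit and the bounded back-tracking property of the $\FN$-action on $T_{\Phi\inv}$ to upgrade subsequential convergence to full convergence. First I would fix $P \in \bar T_{\Phi\inv}$ and $X \in \partial\FN$, and take $u_n$ to be the nested prefixes of an infinite reduced word representing $X$ in some basis. Since $\TPhiinvobs$ is compact, the sequence $(u_n P)$ admits at least one cluster point $Q \in \TPhiinvobs$.

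The key step will be uniqueness of this cluster point. I would invoke bounded back-tracking (BBT) for the minimal, very small, isometric action of $\FN$ on $T_{\Phi\inv}$: there is a constant $C = C(P)$ such that, for any $u, v \in \FN$ with longest common prefix $w$ in the Cayley graph, the tree-geodesics $[P, uP]$ and $[P, vP]$ share an initial subsegment whose terminal point lies within distance $C$ of $wP$. Assume for contradiction that $Q \neq Q'$ are two cluster points. Choose $R$ in the interior of the arc $[Q, Q']$; then $Q$ and $Q'$ lie in distinct directions at $R$, so infinitely many $u_n P$ accumulate on each side of $R$ in $\TPhiinvobs$. For $n, m$ large the common prefix $w_{n,m}$ of $u_n$ and $u_m$ grows arbitrarily long, so BBT forces $[P, u_n P]$ and $[P, u_m P]$ to agree on an initial subsegment overshooting $R$, placing both $u_n P$ and $u_m P$ in the same direction at $R$ --- a contradiction. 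Applying this uniqueness to an interleaved sequence built from two approximations of $X$ shows that $\CQ(X)$ does not depend on the choice of $(u_n)$.

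The remaining assertions are then formal. For independence of $P$: if $P' \in \bar T_{\Phi\inv}$ is another basepoint, then $d(u_n P, u_n P') = d(P, P')$ is uniformly bounded, and in the observers' topology on the dendrite $\TPhiinvobs$ any separating observer is eventually crossed by both sequences, so $u_n P$ and $u_n P'$ share the same limit. Equivariance follows from continuity of the action: $v u_n \to vX$ in $\partial\FN$ while $(v u_n) P = v(u_n P) \to v \CQ(X)$. Continuity of $\CQ$ on $\partial\FN$ is obtained by diagonal extraction: given $X_k \to X$, choose $v_k \in \FN$ with $v_k \to X$ in $\partial\FN$ and $v_k P$ within the $k$-th basic neighborhood of $\CQ(X_k)$ in $\TPhiinvobs$, then $v_k P \to \CQ(X)$ forces $\CQ(X_k) \to \CQ(X)$. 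Throughout, the main obstacle is the BBT-based uniqueness argument: without bounded back-tracking one only controls orbit points up to a uniformly bounded metric distance, which is not enough for convergence in the observers' topology.
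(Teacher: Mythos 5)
The paper does not actually prove Theorem~\ref{thm:Qexists}: it is imported from \cite{chl2} with no argument given, so your proposal can only be measured against the proof in that reference, which, like yours, combines compactness of $\TPhiinvobs$ with the bounded back-tracking property. Your overall strategy (produce a cluster point by compactness, then prove uniqueness of the cluster point) is the right one, but the execution of the uniqueness step has a genuine gap.

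The problematic deduction is: ``for $n,m$ large the common prefix $w_{n,m}$ of $u_n$ and $u_m$ grows arbitrarily long, so BBT forces $[P,u_nP]$ and $[P,u_mP]$ to agree on an initial subsegment overshooting $R$.'' What BBT actually gives is that $w_{n,m}P$ lies within $C$ of $[P,u_nP]$ and of $[P,u_mP]$ (so within $C$ of their common initial segment $[P,Y]$ --- note that even your formulation is off: one gets $d(w_{n,m}P,[P,Y])\le C$, not $d(Y,w_{n,m}P)\le C$). This yields a lower bound on the length of the common segment only if $d(P,w_{n,m}P)$ is large. Because the action of $\FN$ on $T_{\Phi\inv}$ has dense orbits, word length and translation distance are uncorrelated: $d(P,u_nP)$ need not tend to infinity, and may even have liminf zero. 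The case $d(P,u_nP)\not\to\infty$ is precisely the case the observers' topology exists to handle --- it is the case $\CQ(X)\in\bar T_{\Phi\inv}$ rather than $\CQ(X)\in\partial T_{\Phi\inv}$, which occurs for every leaf of the dual lamination, i.e.\ for all the points $X$ this paper actually uses. For such $X$ your argument gives no control on the common initial segment, $R$ is never ``overshot,'' and the contradiction does not materialize. The standard repair (Levitt--Lustig, \cite{chl2}) is finer: one connects $u_nP$ to $u_mP$ by the chain $u_jP$, $n\le j\le m$, whose consecutive steps have length at most $\max_a d(P,aP)$, so that crossing the point $R$ forces some intermediate orbit point $u_jP$ into a controlled neighborhood of $R$, and one then combines this with BBT to pin down the cluster set; this is the idea missing from your proposal. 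Secondarily, the ``formal'' steps are rougher than you suggest: $d(u_nP,u_nP')=d(P,P')$ bounded does not by itself force equal limits in the observers' topology (one traps a fixed nondegenerate arc inside $u_n[P,P']$ for infinitely many $n$ and invokes triviality of arc stabilizers for very small actions), and equivariance requires knowing that each $u\in\FN$ acts on $\TPhiinvobs$ as a homeomorphism for the observers' topology, which should be said.
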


The map $\CQ$ was first introduced in
\cite{ll-north-south,ll-periodic} with a slightly different approach.

Note that the map $\CQ$ fails to be continuous if we replace the
observers' topology by the stronger metric topology.

Let $P$ be a point in $\bar T_{\Phi\inv}$ and let $X$ be in
$\partial\FN$. Let $(u_n)_{n\in\N}$ be a sequence of elements of $\FN$
such that $u_n$ converges to $X$. For each $n$,
$H(u_nP)=\phi(u_n)H(P)$.  From Theorem~\ref{thm:Qexists}, and for the
observers' topology $u_nP$ converge towards $\CQ(X)$ while
$\phi(u_n)H(P)$ converge towards $\CQ(\partial\phi(X))$. Thus we have proved

\begin{lem}\label{lem:QH}
For any element $X\in\partial\FN$, $\CQ(\partial\phi(X))=H(\CQ(X))$.\qed
\end{lem}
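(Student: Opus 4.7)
The plan is direct: approximate $X$ by elements of $\FN$ and transfer the equivariance identity defining the associated homothety across the limit, using the defining property of $\CQ$ from Theorem~\ref{thm:Qexists}.

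First I fix any basepoint $P\in\bar T_{\Phi\inv}$. Since $H$ extends to a homothety of the metric completion, $H(P)$ lies in $\bar T_{\Phi\inv}$ as well and so is an admissible basepoint for Theorem~\ref{thm:Qexists}. Next I pick any sequence $(u_n)$ in $\FN$ converging to $X$ in $\partial\FN$; because $\partial\phi$ is a homeomorphism, the sequence $(\phi(u_n))$ converges to $\partial\phi(X)$ in $\partial\FN$.

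The heart of the argument is the equivariance identity
\[
H(u_nP)=\phi(u_n)H(P),\qquad n\in\N,
\]
which is exactly the definition of the homothety associated to $\phi$. On the right-hand side, Theorem~\ref{thm:Qexists} applied with basepoint $H(P)$ gives $\phi(u_n)H(P)\to\CQ(\partial\phi(X))$ in $\TPhiinvobs$. On the left-hand side, the same theorem applied with basepoint $P$ gives $u_nP\to\CQ(X)$, and then applying $H$ (which is continuous on $\TPhiinvobs$, because as a homothety of an $\R$-tree it maps arcs to arcs and therefore preserves the directions that form a basis for the observers' topology) yields $H(u_nP)\to H(\CQ(X))$. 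Since $\TPhiinvobs$ is Hausdorff, the two limits of $H(u_nP)=\phi(u_n)H(P)$ must coincide, which gives $H(\CQ(X))=\CQ(\partial\phi(X))$.

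The only point demanding care is the continuity of $H$ in the observers' topology, rather than in the metric topology; once that is in hand, the lemma is an immediate consequence of Theorem~\ref{thm:Qexists} and the equivariance built into the definition of $H$. I expect this continuity remark to be the main (mild) obstacle; everything else is a formal passage to the limit.
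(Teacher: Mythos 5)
Your proof is correct and is essentially the paper's own argument: approximate $X$ by $u_n\in\FN$, use the defining equivariance $H(u_nP)=\phi(u_n)H(P)$, and identify the two limits via Theorem~\ref{thm:Qexists} applied with the basepoints $P$ and $H(P)$. The only difference is that you make explicit the continuity of $H$ for the observers' topology (correctly justified, since a homothety permutes directions), a point the paper leaves implicit.
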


\subsection{Dual lamination and the map $\CQ^2$}\label{subsec:Q2}

Using the map $\CQ$, in \cite{chl1-II}, a lamination $L(T_{\Phi\inv})$ dual to
the tree $T_{\Phi\inv}$ was defined.
\[
L(T_{\Phi\inv})=\{(X,Y)\in\partial^2\FN\ |\ \CQ(X)=\CQ(Y)\}.
\]
From this definition, the map $\CQ$ naturally induces an equivariant
map $\CQ^2: L(T_{\Phi\inv})\to \hat T_{\Phi\inv}$. It is proven in
\cite{chl1-II} that the map $\CQ^2$ is continuous (for the metric
topology on $\hat T_{\Phi\inv}$). The image $\Omega$ of $\CQ^2$ is the
\textbf{limit set} of $T_{\Phi\inv}$. It is contained in $\bar
T_{\Phi\inv}$ (equivalently, points of the boundary $\partial
T_{\Phi\inv}$ have exactly one pre-image by $\CQ$) but $\Omega$ may be
strictly smaller than $\bar T_{\Phi\inv}$, in particular it may fail
to be connected.

From Lemma~\ref{lem:QH}, the dual lamination $L(T_{\Phi\inv})$ is
invariant by $\Phi$ and we deduce

\begin{lem}\label{lem:Q2H}
  For any leaf $(X,Y)$ of the dual lamination $L(T_{\Phi\inv})$, we have
  $\CQ^2(\partial^2\phi(X,Y))=H(\CQ^2(X,Y))$.\qed
\end{lem}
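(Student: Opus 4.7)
The plan is to unfold the definition of $\CQ^2$ and then apply Lemma~\ref{lem:QH} twice (once to each coordinate). By definition of the dual lamination $L(T_{\Phi\inv})$, a leaf $(X,Y)$ satisfies $\CQ(X)=\CQ(Y)$, and $\CQ^2(X,Y)$ is precisely this common value in $\bar T_{\Phi\inv}$.

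First, I would verify that $\partial^2\phi(X,Y)=(\partial\phi(X),\partial\phi(Y))$ is again a leaf of $L(T_{\Phi\inv})$, so that the expression $\CQ^2(\partial^2\phi(X,Y))$ makes sense. Applying Lemma~\ref{lem:QH} to each coordinate,
\[
\CQ(\partial\phi(X))=H(\CQ(X))=H(\CQ(Y))=\CQ(\partial\phi(Y)),
\]
which shows that $(\partial\phi(X),\partial\phi(Y))\in L(T_{\Phi\inv})$. This is exactly the $\Phi$-invariance of the dual lamination already mentioned in the text preceding the statement.

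Second, I would use the definition of $\CQ^2$ on $\partial^2\phi(X,Y)$ together with the equality just computed:
\[
\CQ^2(\partial^2\phi(X,Y))=\CQ(\partial\phi(X))=H(\CQ(X))=H(\CQ^2(X,Y)),
\]
which is the claimed identity. There is no real obstacle: the lemma is a direct consequence of Lemma~\ref{lem:QH} once one observes that $\CQ^2$ is, by construction, the factorization of $\CQ$ through the identification of leaves, so equivariance under the homothety $H$ automatically transfers from $\CQ$ to $\CQ^2$.
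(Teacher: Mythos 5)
Your proof is correct and follows exactly the route the paper intends: the lemma is stated with a \qed precisely because it is the immediate consequence of Lemma~\ref{lem:QH} applied to each coordinate, together with the observation that this same computation shows $L(T_{\Phi\inv})$ is $\Phi$-invariant so that $\CQ^2(\partial^2\phi(X,Y))$ is defined. Nothing is missing.
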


\subsection{Dual and attracting laminations}\label{sec:dual-attr-lamin}

The dual lamination is sometime called the zero-length lamination and
it is clear to the experts that it contains the attracting
lamination. This is for example proven in \cite{hm-axes}.

\begin{prop}\label{prop:attracting}
  The attracting lamination $\Latt$ of an irreducible (with
  irreducible powers) outer automorphism $\Phi$ is a sublamination of
  the lamination $L(T_{\Phi\inv})$ dual to the repelling tree
  $T_{\Phi\inv}$ of $\Phi$:
\[
\Latt\subset L(T_{\Phi\inv}).
\]
\end{prop}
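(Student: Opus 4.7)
The plan is to exhibit a single leaf lying in both $L_\Phi$ and $L(T_{\Phi\inv})$, and then invoke the minimality of $L_\Phi$ recalled at the end of Section~\ref{subsec:attlam} to conclude. The natural candidate is a periodic leaf produced by Proposition~\ref{prop:periodicleaves}: since $H$ is a contracting homothety on the complete space $\bar T_{\Phi\inv}$, the Banach fixed-point principle will force the $\CQ$-images of the two endpoints of a periodic leaf to coincide at the unique fixed point of $H$.

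I first produce the periodic leaf. Irreducibility of $\tau$ implies the transition matrix $M$ is Perron--Frobenius irreducible, so for every edge $e$ of $\Gamma$ some iterate $f^n(e)$ contains $e$ as a non-extremal sub-edge; equivalently, the prefix-suffix automaton $\Sigma$ carries a loop $\sigma_1$ at $e$. The purely periodic infinite $e$-path $\sigma=\sigma_1\cdot\sigma_1\cdots\in\partial\Sigma_e$ determines, through Proposition~\ref{prop:rhoe}, a nonempty compact set $C_{\tilde e,\sigma}\subseteq L_\Phi$; I pick any leaf $(X,Y)\in C_{\tilde e,\sigma}$ and apply the converse part of Proposition~\ref{prop:periodicleaves} to obtain $n\geq 1$ with $\partial\phi^n(X)=X$ and $\partial\phi^n(Y)=Y$. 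Iterating Lemma~\ref{lem:QH} yields $\CQ(X)=\CQ(\partial\phi^n(X))=H^n(\CQ(X))$, so $\CQ(X)$ is a fixed point of $H^n$. But $H^n$ is a strict contraction of the complete space $\bar T_{\Phi\inv}$ (ratio $\lambda^n<1$), hence admits a unique fixed point $F\in \bar T_{\Phi\inv}$; therefore $\CQ(X)=F$, and the same argument gives $\CQ(Y)=F$. In particular $\CQ(X)=\CQ(Y)$, so $(X,Y)\in L(T_{\Phi\inv})$.

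To finish, note that $L(T_{\Phi\inv})$ is closed, $\FN$-invariant, and flip-invariant (the $\FN$-invariance comes from the equivariance of $\CQ$ in Theorem~\ref{thm:Qexists}, flip-invariance is immediate from the symmetric definition, and closure follows from the Hausdorffness of $\TPhiinvobs$). Consequently $L_\Phi\cap L(T_{\Phi\inv})$ is a sublamination of $L_\Phi$; by the preceding paragraph it is nonempty, and by minimality of $L_\Phi$ it must equal $L_\Phi$, giving $L_\Phi\subset L(T_{\Phi\inv})$. The only non-routine step, once this strategy is fixed, is producing the periodic leaf, but this reduces directly to Perron--Frobenius irreducibility of $M$; everything else is a clean contraction / minimality argument.
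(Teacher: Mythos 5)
Your strategy --- produce one periodic leaf, send it to the fixed point of the contracting homothety, then invoke minimality of $\Latt$ --- is genuinely different from the paper's proof, which never uses periodic leaves: there, an \emph{arbitrary} leaf $(X,Y)=\lim u_n\tilde f^n(e)$ of $\Latt$ is pushed into $T_{\Phi\inv}$ by an equivariant edge-affine map $q$ satisfying $q\circ\tilde f=H\circ q$, the images of the paths $u_n\tilde f^n(e)$ have length $\lambda^n$ times that of $q(e)$, hence tend to $0$, and $\CQ(X)=\CQ(Y)$ follows from continuity of $\CQ$ in the observers' topology plus Hausdorffness. Your route, however, has two gaps. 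The minor one: the converse direction of Proposition~\ref{prop:periodicleaves} requires pre-periodicity not only of the $e$-path $\sigma$ but also of the lifted edge sequence $(\sigma(\tilde e,\tilde f,n))_{n}$ in $\tilde\Gamma$. A loop $\sigma_1$ at $e$ in $\Sigma$ only guarantees that $\sigma_1(\tilde e,\tilde f,|\sigma_1|)$ is \emph{some} lift $v\tilde e$ of $e$; for a generic representative $\phi$ the sequence of lifts is not pre-periodic. You must first adjust the lift (replace $\tilde f^{|\sigma_1|}$ by $\phi^{|\sigma_1|}(v)\tilde f^{|\sigma_1|}$, i.e.\ pass to a suitable $i_u\circ\phi^{|\sigma_1|}$ in the outer class $\Phi^{|\sigma_1|}$) so that $\tilde e$ itself recurs; this is harmless but must be said, since the homothety used in your fixed-point step is the one attached to that specific representative.

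The serious gap is in the Banach step. The map $\CQ$ takes values in $\TPhiinvobs=\hat T_{\Phi\inv}=\bar T_{\Phi\inv}\cup\partial T_{\Phi\inv}$, not in $\bar T_{\Phi\inv}$. The contraction principle gives a unique fixed point of $H^n$ \emph{in the complete metric space} $\bar T_{\Phi\inv}$, but a contracting homothety of an $\R$-tree can perfectly well fix ends: if $R$ is an eigenray of the expanding homothety $H\inv$ --- and such eigenrays exist, $T_{\Phi\inv}$ being the attracting tree of $\Phi\inv$ --- then $H$ fixes the end of $R$. So from $\CQ(X)=H^n(\CQ(X))$ you cannot conclude $\CQ(X)=F$ without first excluding $\CQ(X)\in\partial T_{\Phi\inv}$. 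Excluding it requires an input not quoted in the paper, e.g.\ that the fixed ends of $H^{n}$ are precisely the $\CQ$-images of attracting fixed points of $\partial\phi^{-n}$, which are disjoint from attracting fixed points of $\partial\phi^{n}$ such as your $X$. The paper's argument sidesteps this entirely by measuring distances between points of $T_{\Phi\inv}$ itself. Your closing minimality argument is fine once a single leaf of $\Latt$ is known to lie in the closed, $\FN$- and flip-invariant lamination $L(T_{\Phi\inv})$, but it buys you nothing the direct argument does not already give.
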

\begin{proof}
  Let $\tau=(\Gamma,*,\pi,f)$ be a train-track representative of
  $\Phi$.  Let $\tilde f$ be a lift of $f$ which is associated to the
  automorphism $\phi$ in the outer class $\Phi$. 

  Let $(X,Y)$ be a leaf in $\Latt$. Then by definition there exists an
  edge $e$ of the universal cover $\tilde\Gamma$ of $\Gamma$ and a
  sequence $u_n$ of elements of $\FN$ such that $u_n{\tilde f}^n(e)$
  converges to $(X,Y)$. Fix two base points in $\tilde\Gamma$ and
  $T_{\Phi\inv}$ (both denoted by $*$) and consider an equivariant map
  $q:\tilde\Gamma\to T_{\Phi\inv}$ such that $q(*)=*$ and which is affine on
  edges of $\tilde\Gamma$. Then for any vertex $P$ of $\tilde\Gamma$,
\[
q(\tilde f(P))=H(P).
\]
We deduce that the length of $q(u_n{\tilde f}^n(e))$ is
$\lambda^n$ times the length of $q(e)$ and as $\lambda<1$ this length
converges to $0$ when $n$ goes to infinity.

Let now $P_0$ be the start-point of $e$ and $P_1$ be its
end-point. Then $u_n{\tilde f}^n(P_0)$ converges to $\partial\pi(X)$
and $u_n{\tilde f}^n(P_1)$ converges to $\partial\pi(Y)$. The map
$\CQ$ is continuous for the weaker observers' topology on $\hat T$
(see \cite{chl2}), so that for this observers' topology $q(u_n{\tilde
f}^n(P_0))$ converges to $\CQ(X)$ and $q(u_n{\tilde f}^n(P_1))$
converges to $\CQ(Y)$.  The distance $d(q(u_n{\tilde f}^n(P_0)),
q(u_n{\tilde f}^n(P_1)))$ converges to $0$. The metric topology is
stronger than the observers' topology, thus the sequence $q(u_n{\tilde
f}^n(P_1))$ converges to $\CQ(X)$. As the observers' topology is
Hausdorff we conclude that $\CQ(X)=\CQ(Y)$. This proves that the leaf
$(X,Y)$ is in the dual lamination $L(T_{\Phi\inv})$ of $T_{\Phi\inv}$.
\end{proof}

\subsection{Self-similar structure}\label{sec:fractalomega}

Let $\Phi$ be an irreducible (with irreducible powers) outer automorphism of $\FN$. Let
$\tau=(\Gamma,*,\pi,f)$ be an irreducible train track representative for $\Phi$. Let
$\phi\in\Phi$ be an automorphism in the outer class $\Phi$ and $\tilde
f$ be the corresponding lift of $f$ to the universal cover
$\tilde\Gamma$ of $\Gamma$. 

Recall from Section~\ref{subsec:attlam} that for an edge $\tilde e$ of
$\tilde\Gamma$ we denote by $C_{\tilde e}$ the set of lines:
\[
C_{\tilde e}=C_\Gamma(\tilde e)\cap \Latt.
\]
Using the map $\CQ^2$ of Section~\ref{subsec:Q2} and
Proposition~\ref{prop:attracting}, we denote by $\Omega_{\tilde e}$ the subset
of $\bar T_{\Phi\inv}$:
\[
\Omega_{\tilde e}=\CQ^2(C_{\tilde e})=\CQ^2(C_\Gamma(\tilde e)\cap \Latt).
\]
As $\CQ^2$ is continuous, $\Omega_{\tilde e}$ is compact.

Using the irreducibility of the train-track $\tau$, eadge leaf of the
attracting lamination contains a translate of the edge $\tilde e$,
thus $\Latt=\FN.C_{\tilde e}$ and $\Omega=\FN.\Omega_{\tilde e}$.

Of course, the map $\CQ^2$ is invariant by the flip map. If $\tilde
e'$ is the reversed edge of $\tilde e$, the cylinders $C_{\tilde e}$
and $C_{\tilde e'}$ are homeomorphic by the flip map, and the
correponding sets of $\bar T_{\Phi\inv}$ are equal: $\Omega_{\tilde
  e}=\Omega_{\tilde e'}$.

We now apply $\CQ^2$ to Proposition~\ref{prop:lamfrac}.

\begin{prop}\label{prop:heartfrac}
  Let $\phi\in\Phi$ be an irreducible (with irreducible powers) automorphism of $\FN$ and $\Phi$ be its
  outer class.  Let $\tau=(\Gamma,*,\pi,f)$ be a train-track
  representative for $\Phi$ and $\tilde f$ a lift of $f$ to the
  universal cover $\tilde\Gamma$ associated to $\phi$.  

  For each edge $\tilde e$ which is a lift of the edge $e$ of $\Gamma$
\[
\Omega_{\tilde e}=\bigcup_{(\tilde e',\tilde p,\tilde s)} H(\Omega_{\tilde e'})
\]
where the finite union is taken over all triples $(\tilde e',\tilde
p,\tilde s)$ such that $\tilde e'$ is an edge of
$\tilde\Gamma$, $\tilde p.\tilde e.\tilde s$ is a reduced path in
$\tilde\Gamma$ and $\tilde f(\tilde e')=\tilde p.\tilde e.\tilde s$.

The above decomposition of $\Omega_{\tilde e}$ does not depend on the
choice of a particular automorphism $\phi$ in the outer class $\Phi$,
of the associated lift $\tilde f$ of $f$ and of the associated
homothety $H$.
\end{prop}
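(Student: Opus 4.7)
The plan is to obtain Proposition~\ref{prop:heartfrac} as the direct image under $\CQ^2$ of the decomposition of $C_{\tilde e}$ given by Proposition~\ref{prop:lamfrac}, using Lemma~\ref{lem:Q2H} to translate the application of $\phi$ into the application of the homothety $H$.

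First I would unfold the definition $\Omega_{\tilde e}=\CQ^2(C_{\tilde e})$ and invoke Proposition~\ref{prop:lamfrac} to write
\[
\Omega_{\tilde e}=\CQ^2\!\Bigl(\,\biguplus_{(\tilde e',\tilde p,\tilde s)}\phi(C_{\tilde e'})\Bigr)=\bigcup_{(\tilde e',\tilde p,\tilde s)}\CQ^2(\phi(C_{\tilde e'})),
\]
where the index runs over the triples with $\tilde f(\tilde e')=\tilde p\cdot\tilde e\cdot\tilde s$. Here I would note explicitly that, by Proposition~\ref{prop:attracting}, every leaf of $C_{\tilde e'}\subset\Latt$ lies in the dual lamination $L(T_{\Phi\inv})$, so that $\CQ^2$ is indeed defined on these leaves. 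The image of a disjoint union under any map is simply a union, so the disjointness is lost at this step; this is the essential reason the statement is only a union, not a disjoint one.

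Next I would apply Lemma~\ref{lem:Q2H} pointwise on each $C_{\tilde e'}$: for every leaf $(X,Y)\in C_{\tilde e'}$,
\[
\CQ^2(\partial^2\phi(X,Y))=H(\CQ^2(X,Y)),
\]
so that $\CQ^2(\phi(C_{\tilde e'}))=H(\CQ^2(C_{\tilde e'}))=H(\Omega_{\tilde e'})$. Substituting into the display above yields the claimed decomposition
\[
\Omega_{\tilde e}=\bigcup_{(\tilde e',\tilde p,\tilde s)} H(\Omega_{\tilde e'}).
\]

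Finally, independence of the decomposition from the choice of $\phi\in\Phi$ and of the associated lift $\tilde f$ (and hence of the associated homothety $H$) I would deduce from the analogous statement in Proposition~\ref{prop:lamfrac}: the family of subsets $\{\phi(C_{\tilde e'})\}_{(\tilde e',\tilde p,\tilde s)}$ of $\Latt$ is intrinsic, and the map $\CQ^2$ itself is canonical (Theorem~\ref{thm:Qexists}), so its image family $\{H(\Omega_{\tilde e'})\}$ is intrinsic as well. There is no serious obstacle here; the only point requiring a line of care is that we pass from a disjoint union to a plain union, since $\CQ^2$ is generally not injective — indeed, controlling the overlaps $H(\Omega_{\tilde e'})\cap H(\Omega_{\tilde e''})$ is exactly what the later Hausdorff dimension computation will need, but for this proposition it is enough to record that the union need not be disjoint.
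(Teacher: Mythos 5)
Your proposal is correct and follows essentially the same route as the paper: apply $\CQ^2$ to the decomposition of Proposition~\ref{prop:lamfrac} and use Lemma~\ref{lem:Q2H} to convert $\CQ^2\circ\partial^2\phi$ into $H\circ\CQ^2$, noting that disjointness is lost. Your remarks on well-definedness via Proposition~\ref{prop:attracting} and on the independence of the choices are slightly more explicit than the paper's one-line computation, but the argument is the same.
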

\begin{proof}
The equality follows directly by applying $\CQ^2$ to Proposition~\ref{prop:lamfrac}:
\begin{eqnarray*}
\Omega_{\tilde e}&=&\CQ^2(C_{\tilde e})\\
&=&\CQ^2(\uplus \phi(C_{\tilde e'}))\\
&=&\cup \CQ^2(\phi(C_{\tilde e'}))\\
&=&\cup H(\CQ^2(C_{\tilde e'}))\\
&=&\cup H(\Omega_{\tilde e'}) \qedhere
\end{eqnarray*}
\end{proof}

This self-similar structure of the compact subsets $\Omega_{\tilde e}$
takes place in the metric space $T_{\Phi\inv}$. Thus, this is exactly
that of a directed graph construction (see \cite{mw}) with similarity
ratios equal to the ratio $\frac{1}{\lambda_{\Phi\inv}}$ of the
homothety $H$.

But we lose the disjointness of the pieces in the self-similar
decomposition. Indeed, the $\Omega_{\tilde e'}$ involved in the
decomposion may fail to be disjoint. We will address this key issue
for the computation of the Hausdorff dimension in
section~\ref{sec:disjoint}.

\subsection{The maps $\CQ_{\tilde e}$}\label{sec:Qetilde}

Exactly as for cylinders of the attracting lamination, we can iterate
the decomposition. Let $\tilde e$ be an edge of $\tilde \Gamma$ that
is a lift of the edge $e$ of $\Gamma$. For any $e$-path
$\sigma$ in $\Sigma_e\cup\partial\Sigma_e$ we consider
\[
\Omega_{\tilde e,\sigma}=\CQ^2(C_{\tilde e,\sigma}).
\]
As above, using 
Propositions~\ref{prop:declamiterated} and
\ref{prop:rhoe} we get:

\begin{prop}\label{prop:omegasigma}
  Let $\Phi$ be an irreducible (with irreducible powers) automorphism of $\FN$ and $\Phi$ be its outer
  class.  Let $\tau=(\Gamma,*,\pi,f)$ be a train-track representative
  for $\Phi$.  Let $\tilde e$ be an edge of $\tilde\Gamma$. Let $\phi$
  be an automorphism in the outer class $\Phi$ and let $\tilde f$ be
  the associated lift of $f$. Let $H$ be the associated homothety of
  the attracting tree $T_{\Phi\inv}$ of $\Phi$.
\begin{enumerate}
\item For any $e$-path $\sigma$ of length $n$,
  $\Omega_{\tilde e,\sigma}=H^n(\Omega_{\sigma(\tilde e,\tilde f,n)})$
\item $\forall n\in\N\cup\{\infty\}$,\quad $\Omega_{\tilde
  e}=\displaystyle\bigcup_{\sigma\in\Sigma_{e}, |\sigma|=n}
\Omega_{\tilde e,\sigma}$
\item The map $\CQ^2$ factors through the map $\rho_{\tilde e}$: there
  exists a continuous map $\CQ_{\tilde e}:\partial\Sigma_{e}\to
  \Omega_{\tilde e}$ that makes the following diagram commutes:
\[
\xymatrix{
  C_{\tilde e}\ar[rr]^{\rho_{\tilde e}}\ar[rd]_{\CQ^2}&&\partial\Sigma_{e}\ar[ld]^{\CQ_{\tilde e}}\\
  &\Omega_{\tilde e} } \qed
\]
\end{enumerate}
\end{prop}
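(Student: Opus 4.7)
The three assertions are progressively stronger consequences of the self-similarity developed in Proposition~\ref{prop:lamfrac} and the conjugacy relation $\CQ^2\circ\partial^2\phi=H\circ\CQ^2$ of Lemma~\ref{lem:Q2H}.

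First I would dispatch (1). By definition $\Omega_{\tilde e,\sigma}=\CQ^2(C_{\tilde e,\sigma})$ and $C_{\tilde e,\sigma}=\phi^n(C_{\sigma(\tilde e,\tilde f,n)})$. Iterating Lemma~\ref{lem:Q2H} gives $\CQ^2\circ\partial^2\phi^n=H^n\circ\CQ^2$ on the whole dual lamination, so
\[
\Omega_{\tilde e,\sigma}=\CQ^2\bigl(\phi^n(C_{\sigma(\tilde e,\tilde f,n)})\bigr)=H^n\bigl(\CQ^2(C_{\sigma(\tilde e,\tilde f,n)})\bigr)=H^n(\Omega_{\sigma(\tilde e,\tilde f,n)}).
\]
For (2), Proposition~\ref{prop:declamiterated} (for finite $n$) and Proposition~\ref{prop:rhoe} (for $n=\infty$) express $C_{\tilde e}$ as the disjoint union of the $C_{\tilde e,\sigma}$; applying the equivariant map $\CQ^2$ yields the corresponding (no longer necessarily disjoint) union $\Omega_{\tilde e}=\bigcup_{\sigma}\Omega_{\tilde e,\sigma}$.

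The main content is (3), which amounts to showing that $\CQ^2$ is constant on each fiber $C_{\tilde e,\sigma}$, $\sigma\in\partial\Sigma_e$; equivalently, that $\Omega_{\tilde e,\sigma}$ is a single point. Here I would use that $H$ is a contracting homothety of ratio $\lambda=1/\lambda_{\Phi\inv}<1$, so $H^n$ contracts distances by $\lambda^n$. The key uniform estimate is that the diameters of the compacts $\Omega_{\tilde e'}$, as $\tilde e'$ ranges over all edges of $\tilde\Gamma$, are bounded by a constant $D$: $\FN$ acts on $\bar T_{\Phi\inv}$ by isometries and translates lifts of a given edge of $\Gamma$ to one another, so $\operatorname{diam}(\Omega_{\tilde e'})$ depends only on the projection of $\tilde e'$ to $\Gamma$, a finite set. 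Combining this with (1), for any finite prefix $\sigma'$ of $\sigma$ of length $n$,
\[
\operatorname{diam}(\Omega_{\tilde e,\sigma'})=\lambda^n\,\operatorname{diam}(\Omega_{\sigma'(\tilde e,\tilde f,n)})\leq \lambda^n D\xrightarrow[n\to\infty]{}0.
\]
Since $\Omega_{\tilde e,\sigma}=\bigcap_{\sigma'\prec\sigma}\Omega_{\tilde e,\sigma'}$ is a decreasing intersection of non-empty compacts (images under $\CQ^2$ of the non-empty compacts $C_{\tilde e,\sigma'}$ of Proposition~\ref{prop:rhoe}) whose diameters tend to zero, it is a single point. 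This defines $\CQ_{\tilde e}(\sigma)$ and factors $\CQ^2\rvert_{C_{\tilde e}}$ through $\rho_{\tilde e}$.

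For the continuity of $\CQ_{\tilde e}$ I would argue directly: given $\sigma_k\to\sigma$ in the compact space $\partial\Sigma_e$, choose $(X_k,Y_k)\in C_{\tilde e,\sigma_k}$. Any subsequential limit $(X,Y)$ in the compact $C_{\tilde e}$ has prefix-suffix representation $\sigma$ by continuity of $\rho_{\tilde e}$, so lies in $C_{\tilde e,\sigma}$; then continuity of $\CQ^2$ in the metric topology gives $\CQ^2(X_k,Y_k)\to\CQ^2(X,Y)=\CQ_{\tilde e}(\sigma)$, and since the limit is independent of the subsequence, $\CQ_{\tilde e}(\sigma_k)\to\CQ_{\tilde e}(\sigma)$. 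The main point requiring care is the uniform bound on $\operatorname{diam}(\Omega_{\tilde e'})$; everything else is an assembly of previously established facts.
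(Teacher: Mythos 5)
Your proof is correct, and it supplies the one point that the paper leaves entirely implicit (the proposition is stated with no proof beyond ``As above, using Propositions~\ref{prop:declamiterated} and \ref{prop:rhoe} we get''). Parts (1) and (2) are indeed immediate from Lemma~\ref{lem:Q2H} and the two cited propositions. The real content is (3), namely that $\CQ^2$ is constant on each fiber $C_{\tilde e,\sigma}$, $\sigma\in\partial\Sigma_e$, and your contraction argument handles this cleanly: the uniform bound $\operatorname{diam}(\Omega_{\tilde e'})\leq D$ is justified exactly as you say (equivariance of $\CQ^2$, isometric action, finitely many edges of $\Gamma$), and combined with part (1) it forces $\Omega_{\tilde e,\sigma}$ to be a single point, which both defines $\CQ_{\tilde e}$ and (via shared prefixes, or your subsequence argument) gives its continuity. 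It is worth noting that the paper has a second route available which you did not use: by Proposition~\ref{prop:rhofinite}, the fiber $C_{\tilde e,\sigma}$ is finite, and its proof shows that when it has more than one element all its leaves share a common endpoint $Y$, so that $\CQ^2$ sends every one of them to $\CQ(Y)$. That argument yields constancy of $\CQ^2$ on fibers without any metric estimate, whereas yours has the advantage of giving the quantitative decay $\operatorname{diam}(\Omega_{\tilde e,\sigma'})\leq\lambda^{|\sigma'|}D$, which is precisely the estimate reused later in the proofs of Lemmas~\ref{lem:lowerbound} and \ref{lem:nur}. Either way the factorization holds; your write-up is complete.
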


In the purpose of describing the self-similar structure of
$\Omega_{\tilde e}$ the choice of an orientation of each edge of
$\tilde\Gamma$ is irrelevant as the map $\CQ^2$ is
flip-invariant. Thus we could consider the smaller \textbf{unoriented
prefix-suffix automaton} $\Sigma^u$ which is obtained from the
prefix-suffix automaton $\Sigma$ by identifying two vertices $e_1$ and
$e_2$ if they are the same edge of $\Gamma$ with reverse orientations
and by identifying to edges
$e_1\stackrel{p_1,s_1}{\longrightarrow}{e'_1}$ and
$e_2\stackrel{p_2,s_2}{\longrightarrow}{e'_2}$ if the edges $e_1,e_2$
and $e'_1$ ,$e'_2$ are the same edge of $\Gamma$ with reverse
orientation and if $p_1,s_2$ and $s_1,p_2$ are the same paths in
$\Gamma$ with reverse orientations. 

In the classical context of substitutions the prefix-suffix automaton
has two symmetric connected components (one with positive letters and
one with inverses) and only the first one is usually considered.

\subsection{Attracting current}

A \textbf{current} for the free group $\FN$ is a Radon measure (recall
that a Radon measure is a Borel measure which is finite on compact
sets) on the double boundary $\partial^2\FN$ that is $\FN$-invariant
and flip-invariant.

As currents are $\FN$-invariant the action of the automorphism group
factors modulo inner automorphisms to a get an action of the outer
automorphism group $\Out(\FN)$ on the space of currents.

The irreducible (with irreducible powers) outer automorphism $\Phi$
has an attracting projectivized current $[\mu_\Phi]$ which was
introduced by R.~Martin \cite{mart}. Exactly as for the attracting
tree (and the repelling tree) we pick one current $\mu_\Phi$ in this
projetivized class.

This current satisfies
\[
\Phi.\mu_\Phi=\lambda_\Phi\mu_\Phi
\]
where $\lambda_\Phi$ is the expansion factor of $\Phi$.
That is to say, for every measurable set $A\subseteq\partial^2\FN$, 
\[
(\Phi.\mu_\Phi)(A)=\mu_\Phi(\phi\inv(A))=\lambda_\Phi\mu_\Phi(A)
\]
where $\phi$ is any automorphism in the outer class $\Phi$.

We refer to I.~Kapovich \cite{kapo-currents} for background, definitions and
statements on currents.

R.~Martin \cite{mart} proved that the support of $\mu_\Phi$ is
exactly the attracting lamination $\Latt$ of $\Phi$. It is proven in
\cite{chl1-III} that the lamination $L(T_{\Phi\inv})$, and thus its
sublamination $\Latt$ is uniquely ergodic.

This (projectivized) attracting current is better described if we use
the prefix suffix-automaton. Let $\tau=(\Gamma,*,\pi,f)$ be a
train-track representative of $\Phi$. Recall from
Section~\ref{sec:train-track-repr} that we denote by $(\mu_e)_e$ a
Perron-Frobenius eigen-vector of the transition matrix of $\tau$. From
the definition of $C_{\tilde e,\sigma}$ we get

\begin{lem}\label{lem:muphicylindre}
  For any edge $\tilde e$ of $\tilde\Gamma$ that lies above the edge $e$ of $\Gamma$
\[
\mu_\Phi(C_{\tilde e})=\mu_e.
\]
  Let $\sigma$ be a finite $e$-path in $\Sigma_e$ that ends at the
  edge $e'$ of $\Gamma$.  Then
\[
\mu_\Phi(C_{\tilde e,\sigma})=\frac{\mu_{e'}}{(\lambda_\Phi)^{|\sigma|}}.
\qed
\]
\end{lem}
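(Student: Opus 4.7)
The plan is to show that the vector $(\mu_\Phi(C_{\tilde e}))_e$, indexed by unoriented edges of $\Gamma$, is a positive Perron--Frobenius eigenvector of $M$ with eigenvalue $\lambda_\Phi$. Since such an eigenvector is unique up to a positive scalar and both $\mu_\Phi$ and $(\mu_e)_e$ are themselves defined only up to a positive scalar, this yields the first equality once normalizations are fixed compatibly. The second equality then falls out of the self-similar decomposition and the scaling behaviour of $\mu_\Phi$ under $\phi$.

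First, since $\mu_\Phi$ is $\FN$-invariant and $u\cdot C_{\tilde e}=C_{u\tilde e}$, the value $\mu_\Phi(C_{\tilde e})$ depends only on the projection $e$ of $\tilde e$ in $\Gamma$; flip-invariance of $\mu_\Phi$ further identifies $\mu_\Phi(C_{\tilde e})$ with $\mu_\Phi(C_{\tilde e\inv})$, so the value is independent of the chosen orientation of $e$. Denote this common number by $\nu_e$.

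Next I apply $\mu_\Phi$ to the disjoint decomposition of Proposition~\ref{prop:lamfrac}. Using the defining scaling relation $\mu_\Phi(\phi(A))=\lambda_\Phi\inv\mu_\Phi(A)$, each piece $\phi(C_{\tilde e'})$ contributes $\nu_{e'}/\lambda_\Phi$, where $e'$ is the projection of $\tilde e'$. The triples $(\tilde e',\tilde p,\tilde s)$ with $\tilde f(\tilde e')=\tilde p\cdot\tilde e\cdot\tilde s$ biject, after projection to $\Gamma$, with occurrences of the oriented edge $e$ inside $f(e'^{\pm})$; using $f(e'\inv)=f(e')\inv$ together with flip-invariance to merge the two orientations of $e'$, the total count equals the unoriented transition entry $m_{e,e'}$. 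We thus obtain
\[
\lambda_\Phi\,\nu_e=\sum_{e'}m_{e,e'}\,\nu_{e'},
\]
so $(\nu_e)_e$ is a positive Perron--Frobenius eigenvector of $M$ for $\lambda_\Phi$. By uniqueness up to scale, $\nu_e=\mu_e$ after the (implicit) compatible normalization of $\mu_\Phi$ and $(\mu_e)_e$.

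For the second formula, recall from Section~\ref{sec:prefix} that $C_{\tilde e,\sigma}=\phi^n(C_{\sigma(\tilde e,\tilde f,n)})$ for $n=|\sigma|$. Iterating the scaling gives
\[
\mu_\Phi(C_{\tilde e,\sigma})=\lambda_\Phi^{-n}\,\mu_\Phi(C_{\sigma(\tilde e,\tilde f,n)})=\frac{\mu_{e'}}{\lambda_\Phi^{|\sigma|}},
\]
where $e'$ is the projection of $\sigma(\tilde e,\tilde f,n)$ in $\Gamma$, i.e.\ the terminal vertex of the $e$-path $\sigma$. The only delicate point in the whole argument is the orientation bookkeeping in the eigenvector computation; once that is settled, both statements are immediate.
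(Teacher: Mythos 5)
Your proof is correct and follows exactly the route the paper intends (it states the lemma with a \qed and only a parenthetical remark about normalization): apply the flip- and $\FN$-invariance of $\mu_\Phi$, the scaling relation $\mu_\Phi(\phi(A))=\lambda_\Phi\inv\mu_\Phi(A)$, and the disjoint decomposition of Proposition~\ref{prop:lamfrac} to exhibit $(\mu_\Phi(C_{\tilde e}))_e$ as a positive eigenvector of $M$ for $\lambda_\Phi$, then invoke Perron--Frobenius uniqueness up to scale; the iterated formula is immediate from $C_{\tilde e,\sigma}=\phi^n(C_{\sigma(\tilde e,\tilde f,n)})$. Your orientation bookkeeping matching the oriented triples of Proposition~\ref{prop:lamfrac} with the unoriented entries $m_{e,e'}$ is exactly the point worth making explicit, and you make it correctly.
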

(There is some fuzzyness in these equalities as both the eigen-vector
and the attracting current are only defined up to a mulitplicative
constant. The Lemma has to be understood as: there is a choice of
$\mu_\Phi$ and of $(\mu_e)_e$ such that...).

We consider $\nu_\Phi$ the push-forward of the attracting current
$\mu_\Phi$ by the continuous map $\CQ^2$ to the repelling tree
$T_{\Phi\inv}$:
That is to say for any measurable set $A$ in $\bar T_{\Phi\inv}$
\[
\nu_\Phi(A)=\mu_\Phi({\CQ^2}\inv(A)).
\]
From Lemma~\ref{lem:muphicylindre} we get

\begin{lem}\label{lem:nuphicylindre}
  For any edge $\tilde e$ of $\tilde\Gamma$ that lies above the edge $e$ of $\Gamma$
\[
\nu_\Phi(\Omega_{\tilde e})=2\mu_e.
\]
   Let $\sigma$ be a finite $e$-path in $\Sigma_e$ that ends at the
  edge $e'$ of $\Gamma$.  Then
\[
\nu_\Phi(\Omega_{\tilde e,\sigma})=2\frac{\mu_{e'}}{(\lambda_\Phi)^{|\sigma|}}.
\qed
\]
\end{lem}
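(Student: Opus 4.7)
The plan is to unwind the definition of the push-forward measure and to identify the $\mu_\Phi$-mass of the preimage of $\Omega_{\tilde e}$ with that of $C_{\tilde e}\cup C_{\tilde e'}$, where $\tilde e'$ denotes $\tilde e$ with reversed orientation. By definition of $\nu_\Phi$, together with R.~Martin's identification of $\supp(\mu_\Phi)$ with $\Latt$, one has
\[
\nu_\Phi(\Omega_{\tilde e})=\mu_\Phi\bigl((\CQ^2)\inv(\Omega_{\tilde e})\cap\Latt\bigr).
\]
Since $\CQ^2$ is flip-invariant and the flip exchanges $C_{\tilde e}$ with $C_{\tilde e'}$, both cylinders lie in this preimage.

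The central step is to argue that $(\CQ^2)\inv(\Omega_{\tilde e})\cap\Latt=C_{\tilde e}\cup C_{\tilde e'}$ up to a set of $\mu_\Phi$-measure zero. A leaf $(X,Y)\in\Latt\smallsetminus(C_{\tilde e}\cup C_{\tilde e'})$ lands in this preimage precisely when it shares its $\CQ^2$-image with some leaf of $C_{\tilde e}$; Proposition~\ref{prop:heartfrac} already warns that the pieces $\Omega_{\tilde e''}$ can overlap, so such ``collision leaves'' do exist in general. To control them I would appeal to the prefix-suffix coding: by Proposition~\ref{prop:rhofinite} each $C_{\tilde e,\sigma}$ with $\sigma\in\partial\Sigma_e$ is finite, and a coincidence $\CQ^2(X_1,Y_1)=\CQ^2(X_2,Y_2)$ between leaves with distinct codings forces the associated infinite paths to be eventually periodic, in the spirit of Proposition~\ref{prop:periodicleaves}. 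Pre-periodic leaves form a countable subset of $\Latt$, and the unique ergodicity of $\Latt$ quoted from~\cite{chl1-III} makes $\mu_\Phi$ non-atomic on $\Latt$, so this countable set of collision leaves contributes zero mass.

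Given this identification, disjointness of $C_{\tilde e}$ and $C_{\tilde e'}$ (a reduced bi-infinite geodesic cannot traverse a given geometric edge in both directions), flip-invariance of $\mu_\Phi$, and Lemma~\ref{lem:muphicylindre} together yield
\[
\nu_\Phi(\Omega_{\tilde e})=\mu_\Phi(C_{\tilde e})+\mu_\Phi(C_{\tilde e'})=2\mu_e,
\]
which is the first equality. For the second I would avoid rerunning the fibre analysis and instead use the self-similar scaling: Proposition~\ref{prop:omegasigma}(1) gives $\Omega_{\tilde e,\sigma}=H^{|\sigma|}(\Omega_{\sigma(\tilde e,\tilde f,|\sigma|)})$, while Lemma~\ref{lem:Q2H} combined with $\Phi\cdot\mu_\Phi=\lambda_\Phi\mu_\Phi$ yields the push-forward scaling rule $\nu_\Phi(H(A))=\lambda_\Phi\inv\nu_\Phi(A)$. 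Iterating $|\sigma|$ times and feeding in the first equality at the terminal edge $e'$ of $\sigma$ produces $\nu_\Phi(\Omega_{\tilde e,\sigma})=2\mu_{e'}/(\lambda_\Phi)^{|\sigma|}$.

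The main obstacle is clearly the measure-zero control of the fibres of $\CQ^2$ in the central step: the pieces $\Omega_{\tilde e''}$ genuinely do overlap, and the argument hinges on Propositions~\ref{prop:rhofinite} and~\ref{prop:periodicleaves} to pin the non-trivial fibres down to a countable, hence $\mu_\Phi$-null, collection of pre-periodic leaves. Once this is secured, both displayed equalities follow by routine bookkeeping with Lemma~\ref{lem:muphicylindre}, flip-invariance, and the Perron--Frobenius rescaling of $\mu_\Phi$.
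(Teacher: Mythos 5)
Your proof is correct and takes the route the paper intends (the paper states this lemma with essentially no proof beyond the remark that the factor $2$ comes from flip-invariance of $\mu_\Phi$ and $\CQ^2$): unwind the push-forward, identify $(\CQ^2)\inv(\Omega_{\tilde e})\cap\Latt$ with $C_{\tilde e}\cup C_{\tilde e'}$ up to a $\mu_\Phi$-null set, and invoke Lemma~\ref{lem:muphicylindre}, with the second equality following from the scaling $\nu_\Phi(H(A))=\lambda_\Phi\inv\nu_\Phi(A)$. You in fact supply the one point the paper silently glosses over --- that the collision leaves form a countable, hence null, set --- though for non-atomicity of $\mu_\Phi$ the cleanest justification is Lemma~\ref{lem:muphicylindre} itself ($\mu_\Phi(C_{\tilde e,\sigma})=\mu_{e'}\lambda_\Phi^{-|\sigma|}\to 0$ and every leaf lies in a nested sequence of such cylinders), or minimality of $\Latt$ together with $\FN$-invariance and the Radon property, rather than unique ergodicity per se.
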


The $2$ factor comes from the fact that we considered currents as
being invariant by the flip-map and that $\CQ^2$ is flip-invariant. As
both the attracting current and the metric of the repelling tree are
only defined up to a multiplicative constant this is totally
unsignificant.

\subsection{(Non-)Injectivity of $\CQ$}\label{sec:disjoint}

To get the Hausdorff dimension and measure of a self-similar metric
space a key feature is to know how much the self-similar pieces are
disjoint. In this purpose we collect results on the (non-)injectivity
of $\CQ$, $\CQ^2$ and $\CQ_{\tilde e}$ and we complete
Proposition~\ref{prop:heartfrac} by stating that the pieces in the
self-similar decomposition intersect in at most finitely many points.

Those results are much easier to state and prove in the case of
non-geometric outer automorphisms of the free group.  Recall that an
outer automorphism $\Phi$ of the free group is \textbf{geometric} if
it is induced by a homeomorphism $h$ of a surface $S$ with boundary
such that $\pi_1(S)=\FN$. In this case $h$ fixes the boundary
components of the fundamental group of $S$ and the action of $\FN$ of
the repelling and attracting trees $T_{\Phi\inv}$ and $T_\Phi$ are not
free. In this geometric case we have to deal with stabilizers of
points and fixed subgroups of the automorphisms in the outer class
$\Phi$. However the two trees $T_{\Phi\inv}$ and $T_\Phi$ are
\textbf{surface} (they are transverse to the lifts of the stable and
unstable foliations of $h$ on $S$), their limit sets $\Omega_{\tilde e}$
are intervals (or multi-interval) and the Hausdorff dimensions are $1$
which is not really striking.

On the opposite, if we assume that $\Phi$ is non-geometric then the
action of $\FN$ on the repelling and attracting trees are free and
automorphisms in the outer class $\Phi$ have trivial fixed
subgroups. This simplifies our work. Thus from now on we assume that
$\Phi$ is non-geometric.

The following result is proven in \cite{ch}. 

\begin{prop}[\cite{ch}]\label{prop:Qfinitetoone}
  Let $\Phi$ be an irreducible (with irreducible powers) non-geometric outer
  automorphism of $\FN$. Let $T_{\Phi\inv}$ be its repelling tree in
  the boundary of outer space.  

  Then $\CQ$ is finite-to-one and there are finitely many orbits of
  points in $\hat T_{\Phi\inv}$ with strictly more than two pre-images
  by $\CQ$.\qed
\end{prop}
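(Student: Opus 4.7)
The plan is to identify the preimages of $\CQ$ at a point $P \in \hat T_{\Phi\inv}$ with a subset of the directions at $P$ in the observers' topology, and then to bound the number of directions using the Gaboriau--Levitt index theorem for very small minimal actions of $\FN$ on $\R$-trees. The non-geometric hypothesis enters through the freeness of the $\FN$-action on $T_{\Phi\inv}$, which avoids complications with point stabilizers and fixed subgroups. Since points of $\partial T_{\Phi\inv}$ already have exactly one preimage by $\CQ$, the content is to control preimages of points in $\bar T_{\Phi\inv}$.

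First I would associate to each $X \in \CQ^{-1}(P)$ a well-defined direction at $P$, i.e.\ a connected component of $\hat T_{\Phi\inv} \smallsetminus \{P\}$. Fix a basepoint $P_0 \in T_{\Phi\inv}$ and a sequence $u_n \in \FN$ with $u_n \to X$ in $\partial\FN$. By Theorem~\ref{thm:Qexists}, $u_n P_0 \to P$ in $\TPhiinvobs$, so from some rank on the sequence lies in a single connected component of $\hat T_{\Phi\inv} \smallsetminus \{P\}$. A bounded cancellation argument, using the equivariant affine map $q: \tilde\Gamma \to T_{\Phi\inv}$ from the proof of Proposition~\ref{prop:attracting}, shows that this direction depends only on $X$ and not on the approximating sequence. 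Injectivity of the assignment $X \mapsto \mathrm{dir}_P(X)$ on $\CQ^{-1}(P)$ would then follow from freeness: distinct $X_1, X_2$ with $\CQ(X_1) = \CQ(X_2) = P$ can be separated in $\tilde\Gamma$ by disjoint neighbourhoods at infinity, and pushing these through $q$ produces approximating sequences whose tails lie in different components of $\hat T_{\Phi\inv} \smallsetminus \{P\}$.

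Second, I would invoke the Gaboriau--Levitt index inequality for minimal very small $\FN$-actions with dense orbits and (here, by non-geometricity) trivial point stabilizers:
\[
\sum_{[P]} (\mathrm{val}(P) - 2) \leq 2N - 2,
\]
where the sum ranges over $\FN$-orbits of points of valence at least three. The first step gives $|\CQ^{-1}(P)| \leq \mathrm{val}(P)$ for every $P \in \hat T_{\Phi\inv}$. Regular points have valence two and hence at most two preimages. The index inequality then yields that the valence of every point is finite, so $\CQ$ is finite-to-one, and that only finitely many orbits of points have valence strictly greater than two, so only finitely many orbits have more than two preimages.

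The main obstacle is the injectivity of the direction assignment. The difficulty is that $\CQ$ is continuous only for the observers' topology, which is strictly weaker than the metric topology on $\hat T_{\Phi\inv}$; two different sequences $u_n \to X$ can approach $P$ in metrically very different ways, and a priori nothing prevents $u_n^1 \to X_1$ and $u_n^2 \to X_2$ (with $X_1 \neq X_2$) from having tails that mix in the same direction at $P$. Ruling this out requires carefully combining bounded cancellation for the quasi-isometry $\tilde f$, the density of $\FN$-orbits in $T_{\Phi\inv}$, and freeness of the action, which together force the two approaches to eventually be separated by the point $P$ itself in the tree.
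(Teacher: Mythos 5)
The paper does not prove this Proposition; it cites it to \cite{ch}, where the statement is the ``$\CQ$-index'' theorem of Coulbois--Hilion, proved by running the Rips machine on the system of partial isometries of the compact heart of $T_{\Phi\inv}$. Your proposal instead tries to reduce it to the Gaboriau--Levitt geometric index by injecting $\CQ\inv(P)$ into the set of directions at $P$. That reduction fails, and this is not a repairable technicality: the inequality $|\CQ\inv(P)|\leq \mathrm{val}(P)$ is false. Every point $P$ added in the metric completion, i.e.\ every $P\in\bar T_{\Phi\inv}\smallsetminus T_{\Phi\inv}$, has valence $1$ in $\bar T_{\Phi\inv}$ (if removing $P$ disconnected $\bar T_{\Phi\inv}$, the arc between two points of $T_{\Phi\inv}$ in different components would pass through $P$, forcing $P\in T_{\Phi\inv}$). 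Yet every point of the limit set $\Omega=\CQ^2(\Latt)$ has at least two $\CQ$-preimages, namely the two ends of a leaf mapped to it, and $\Omega$ meets $\bar T_{\Phi\inv}\smallsetminus T_{\Phi\inv}$ in abundance --- in the Tribonacci example $\Omega_\CA$ is a compact tree of Hausdorff dimension $\approx 1.829$, almost all of whose points are valence-one completion points with $\geq 2$ preimages. So distinct preimages of $P$ need not determine distinct (or indeed any) directions at $P$; two ends of a leaf typically approach $P$ ``from the same side.'' There is also a smaller issue earlier in your argument: convergence $u_nP_0\to P$ in the observers' topology does not force the sequence to eventually lie in a single component of $\hat T_{\Phi\inv}\smallsetminus\{P\}$, so even the well-definedness of your direction map needs more than stated.

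The underlying conceptual point is that the Gaboriau--Levitt index $\sum_{[P]}(\mathrm{val}(P)-2)\leq 2N-2$ controls the branching of the tree, whereas the Proposition controls the fibers of $\CQ$; these are measured by two genuinely different indices, and the whole ``botanical classification'' of \cite{ch} rests on the fact that they can differ (surface type versus Levitt type versus mixed). The bound you need is $\sum_{[P]}\max(0,|\CQ\inv(P)|-2)\leq 2N-2$, which is the content of the cited reference and requires its own proof (via Rips induction on the compact heart), not a corollary of Gaboriau--Levitt. Your use of non-geometricity to get a free action, and the observation that boundary points of $T_{\Phi\inv}$ have a unique preimage, are both fine, but the central step of the argument is missing.
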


From the definitions of $\CQ^2$ and, if we fix a train-track
representative $\tau=(\Gamma,*,\pi,f)$ and an edge $\tilde e$ of the
universal cover $\tilde\Gamma$, from the definition of $\CQ_{\tilde e}$,
we deduce

\begin{cor}\label{cor:Q2finitetoone}\label{cor:Qefinitetoone}
  $\CQ^2$ and $\CQ_{\tilde e}$ are finite-to-one.

  There are finitely many orbits of points in $\hat T_{\Phi\inv}$ with
  strictly more than two pre-images by $\CQ^2$ or with stricly more
  than one pre-image by $\CQ_{\tilde e}$.\qed
\end{cor}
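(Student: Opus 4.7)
The plan is to derive both assertions directly from Proposition~\ref{prop:Qfinitetoone} by a bookkeeping of the preimages of $\CQ^2$ and $\CQ_{\tilde e}$ in terms of those of $\CQ$.

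For $\CQ^2$, unwinding the definition of the dual lamination shows that $(X,Y)\in(\CQ^2)^{-1}(P)$ exactly when $X,Y\in\CQ^{-1}(P)$ with $X\neq Y$. Hence, setting $k=|\CQ^{-1}(P)|$, the fiber $(\CQ^2)^{-1}(P)$ has cardinality $k(k-1)$, which is finite by Proposition~\ref{prop:Qfinitetoone}. Since $k(k-1)>2$ if and only if $k\geq 3$, the set of $P$ with more than two $\CQ^2$-preimages coincides with the set of $P$ with more than two $\CQ$-preimages, so the finite orbit count of Proposition~\ref{prop:Qfinitetoone} transfers verbatim.

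For $\CQ_{\tilde e}:\partial\Sigma_{e}\to\Omega_{\tilde e}$, I would use the factorization $\CQ^2|_{C_{\tilde e}}=\CQ_{\tilde e}\circ\rho_{\tilde e}$ supplied by Proposition~\ref{prop:omegasigma}. Surjectivity of $\rho_{\tilde e}$ immediately gives
\[
\CQ_{\tilde e}^{-1}(P)=\rho_{\tilde e}\bigl((\CQ^2)^{-1}(P)\cap C_{\tilde e}\bigr),
\]
which is finite; so $\CQ_{\tilde e}$ is finite-to-one. Suppose now that $|\CQ_{\tilde e}^{-1}(P)|\geq 2$: pick distinct $\sigma_1,\sigma_2\in\partial\Sigma_e$ in this fiber and lift them to leaves $(X_i,Y_i)\in C_{\tilde e,\sigma_i}$ with $\CQ^2(X_i,Y_i)=P$. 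These two leaves are distinct (they have different prefix-suffix representations), and both traverse $\tilde e$ in the same orientation, so one cannot be the flip of the other. A short case analysis on the four endpoints $X_1,Y_1,X_2,Y_2\in\CQ^{-1}(P)$ shows that they cannot reduce to only two distinct points (that would force the flip case), so $|\CQ^{-1}(P)|\geq 3$; a final appeal to Proposition~\ref{prop:Qfinitetoone} then yields the desired finite orbit count.

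The only step beyond pure bookkeeping is the case analysis excluding the flip case in the $\CQ_{\tilde e}$ argument, but this is immediate from the fact that a leaf in $C_{\tilde e}$ and its flip live in cylinders of opposite orientations of $\tilde e$. Thus no serious obstacle is anticipated.
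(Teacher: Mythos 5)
Your proposal is correct and is exactly the deduction the paper intends (the paper leaves the Corollary as an immediate consequence of Proposition~\ref{prop:Qfinitetoone} and the definitions): the fiber count $k(k-1)$ for $\CQ^2$, the factorization through the onto map $\rho_{\tilde e}$, and the exclusion of the flip case via the orientation of $\tilde e$ are all the right bookkeeping steps. No gaps.
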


From this corollary we can complete Proposition~\ref{prop:heartfrac}
by stating that the decomposition obtained there is not a partition
(as in Proposition~\ref{prop:lamfrac}) but nevertheless intersections
are finite.

\begin{prop}\label{prop:omegadisjoint}
  Let $\Phi$ be an irreducible (with irreducible powers) non-geometric outer
  automorphism of $\FN$. Let $T_{\Phi\inv}$ be its repelling tree in
  the boundary of outer space.  Let $\tau=(\Gamma,*,\pi,f)$ be a train-track representative for $\Phi$.

  Let $\tilde e$ be an edge of the universal cover $\tilde\Gamma$ lying above the edge $e$ of $\Gamma$. 
  Let $\sigma$ and $\sigma'$ be two distinct $e$-paths of length $n$. 

Then the intersection $\Omega_{\tilde e,\sigma}\cap\Omega_{\tilde e,\sigma'}$ is a finite set.
\end{prop}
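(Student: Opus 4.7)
The plan is to show that every point in the intersection is a branch point of $\CQ_{\tilde e}$, to upgrade this via the dynamics of the homothety $H$ to eventual periodicity of its prefix-suffix preimages with uniformly bounded parameters, and then to count using the finiteness of the prefix-suffix automaton.

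First, using the factorization $\CQ^2 = \CQ_{\tilde e} \circ \rho_{\tilde e}$ from Proposition~\ref{prop:omegasigma}(3) together with the disjointness of the cylinders $C_{\tilde e, \sigma}$ and $C_{\tilde e, \sigma'}$ from Proposition~\ref{prop:declamiterated}, the prefix-suffix cylinders $\rho_{\tilde e}(C_{\tilde e, \sigma})$ and $\rho_{\tilde e}(C_{\tilde e, \sigma'})$ are disjoint in $\partial\Sigma_e$. Therefore any $P \in \Omega_{\tilde e, \sigma} \cap \Omega_{\tilde e, \sigma'}$ admits at least two distinct preimages under $\CQ_{\tilde e}$, one starting with $\sigma$ and one with $\sigma'$. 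By Corollary~\ref{cor:Qefinitetoone} the intersection is therefore contained in a union of finitely many $\FN$-orbits of such branch points.

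Second, let $\ell \in C_{\tilde e, \sigma}$ and $\ell' \in C_{\tilde e, \sigma'}$ be two $\CQ^2$-preimages of $P$. By Lemma~\ref{lem:Q2H}, every iterate $H^{-n}(P) = \CQ^2(\partial^2\phi^{-n}(\ell)) = \CQ^2(\partial^2\phi^{-n}(\ell'))$ is again a branch point. Since branch points lie in finitely many $\FN$-orbits, the pigeonhole principle applied to the sequence $(H^{-n}(P))_n$ — enlarged to also control the induced permutation on the finite set of $\CQ^2$-preimages of $P$ — yields integers $0 \le n < m$, bounded independently of $P$, and an element $w \in \FN$ with both $H^{-n}(P) = w \cdot H^{-m}(P)$ and $\partial^2\phi^{-n}(\ell) = w \cdot \partial^2\phi^{-m}(\ell)$. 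Applying $\partial^2\phi^m$ and using the equivariance $\partial^2\phi(u \cdot Y) = \phi(u) \cdot \partial^2\phi(Y)$ rearranges the second equality to $\partial^2\phi^{m-n}(\ell) = \phi^m(w) \cdot \ell$, so $\ell$ is periodic under $\partial^2\phi$ up to translation. By Proposition~\ref{prop:periodicleaves}, the prefix-suffix representation $\rho_{\tilde e}(\ell)$ is pre-periodic with pre-period and period bounded uniformly in $P$, and similarly for $\rho_{\tilde e}(\ell')$.

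Finally, the finite prefix-suffix automaton $\Sigma$ admits only finitely many infinite $e$-paths with pre-period and period under such uniform bounds. Since $\CQ_{\tilde e}$ is finite-to-one, only finitely many points $P$ arise, proving that $\Omega_{\tilde e, \sigma} \cap \Omega_{\tilde e, \sigma'}$ is finite.

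The main obstacle is the enhanced pigeonhole in the second step: since $P$ may have more than two $\CQ^2$-preimages, the bare orbit-coincidence $H^{-n}(P) = w \cdot H^{-m}(P)$ could a priori be realised by a nontrivial permutation of the preimages under $w$, giving only a relation between different preimages rather than a periodicity of a single one. Using the freeness of the $\FN$-action on $T_{\Phi\inv}$ (non-geometric case) to make the identifying group element unique, together with the finite number of preimages, one must iterate the pigeonhole over a window large enough to absorb this permutation and extract a genuine periodicity on $\ell$ itself.
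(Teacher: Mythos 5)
Your first step is sound and matches the paper's opening observation: every point of $\Omega_{\tilde e,\sigma}\cap\Omega_{\tilde e,\sigma'}$ has at least two $\CQ_{\tilde e}$-preimages, hence at least three $\CQ$-preimages, hence lies in one of the finitely many $\FN$-orbits supplied by Proposition~\ref{prop:Qfinitetoone}. The gap is in the final count. Reducing to finitely many orbits proves nothing by itself, since each orbit is infinite, and your attempt to finish via prefix-suffix representations does not close the gap: the pigeonhole on $(H^{-n}(P))_n$ can indeed bound the \emph{period} uniformly (this is essentially what Proposition~\ref{prop:singularperiodic} does, later and independently), but the \emph{pre-period} produced by Proposition~\ref{prop:periodicleaves} is the number $k_0$ of iterations needed for the expanding map to push the ``periodic core'' edge $\tilde e_1$ of the fixed leaf out to the prescribed edge $\tilde e_0=\tilde e$; it measures how far $\tilde e$ sits from the core of the leaf along its geometric realisation. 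That quantity is unbounded as $P$ ranges over a single $\FN$-orbit: a periodic leaf contains infinitely many translates of $e$, so infinitely many of its $\FN$-translates pass through the fixed edge $\tilde e$, with arbitrarily long pre-periods. Hence ``only finitely many infinite $e$-paths with pre-period and period under such uniform bounds'' is unjustified, and nothing in your argument prevents infinitely many points of one orbit from lying in the intersection.

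The paper closes exactly this hole by a different mechanism. It argues by contradiction: given infinitely many intersection points $P_k=u_kP_0$ with preimage leaves $u_kZ_0\in C_{\tilde e,\sigma}$ and $u_kZ'_0\in C_{\tilde e,\sigma'}$ (extracted using that $\CQ^2$ is finite-to-one), it uses compactness of the cylinders and the convergence-group action of $\FN$ on $\partial\FN$. After passing to subsequences with $u_k\to U$ and $u_k^{-1}\to V$, each of the limit lines $\lim u_kZ_0$ and $\lim u_kZ'_0$ must have one end equal to $U$ and the other equal to $\lim u_kV$, so the two limits are reverses of one another; but $C_{\tilde e}$ cannot contain a line together with its reverse, since the reverse contains $\tilde e$ with the opposite orientation. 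If you want to salvage the periodicity route, you would need an extra argument controlling the position of $\tilde e$ relative to the periodic core of the leaves in the intersection, which amounts to redoing this compactness argument.
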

\begin{proof}
  By Proposition~\ref{prop:lamfrac}, $C_{\tilde e,\sigma}$ and
  $C_{\tilde e,\sigma'}$ are disjoint.

  Assume by contradiction that there are infinitely many distinct
  elements $(P_n)_{n\in\N}$ in the intersection.  For each $n$, $P_n$
  has at least two pre-images by $\CQ_{\tilde e}$ (one starting by
  $\sigma$ and one starting by $\sigma'$). Applying
  Corollary~\ref{cor:Qefinitetoone}, up to passing to a subsequence,
  all the points $P_n$ are in the same orbit under the action of
  $\FN$: There exist elements $u_n\in\FN$ such that $P_n=u_nP_0$.

  From the commutative diagram in Proposition~\ref{prop:omegasigma},
  for each $n$, there exists elements $Z_n\in C_{\tilde e,\sigma}$ and
  $Z'_n\in C_{\tilde e,\sigma'}$ such that
  $\CQ^2(Z_n)=\CQ^2(Z'_n)=P_n$. As $\CQ^2$ is equivariant we get
\[
 \CQ^2({u_n}\inv Z_n)=\CQ^2({u_n}\inv Z'_n)=P_0,
\]
and as $\CQ^2$ is finite-to-one, up to passing to a subsequence we
assume that for all $n$ $Z_n=u_nZ_0$ and $Z'_n=u_nZ'_0$.

Again, up to passing to a subsequence we assume that the sequences
$(u_n)_{n\in\N}$ and $({u_n}\inv)_{n\in\N}$ converge to elements $U$
and $V$ respectively in $\partial\FN$ and, as $C_{\tilde e,\sigma}$
and $C_{\tilde e,\sigma'}$ are compact, that the sequences $(u_n
Z_0)_{n\in\N}$ and $({u_n Z'_0})_{n\in\N}$ converge to elements $Z$
and $Z'$.  We also assume that $(u_nV)_{n\in\N}$ converges to an
element $W\in\partial\FN$.

The action of $\FN$ on $\partial\FN$ is that of a convergence group, 
in particular, 
\[ 
\forall X\in\partial\FN\smallsetminus\{V\}\quad\lim_{n\to\infty} u_n X=
U.
\]
As the two ends of $Z$ (resp. $Z'$) are distinct, one of the two ends
of $Z_0$ (resp. $Z'_0$) is $V$. Thus $Z$ and $Z'$ are equal to $(U,W)$
or $(W,U)$.  As $C_{\tilde e,\sigma}$ and $C_{\tilde e,\sigma'}$ are
disjoint, $Z$ and $Z'$ have the same geometric realisation in reverse
order. But $C_{\tilde e}$ does not contain two paths in reverse
order. A contradiction.
\end{proof}

We now describe precisely the points with strictly more than one
pre-image by $\CQ_{\tilde e}$. For that we need to assume that the
outer automorphism $\Phi$ is forward rotationless. 

An irreducible (with irreducible powers), non-geometric, outer
automorphism $\Phi\in\partial\FN$ is \textbf{forward rotationless}
(see \cite{fh-recognition}) if for any integer $n$, for any
automorphism $\psi$ in the outer class $\Phi^n$ with strictly more
than two attracting fixed points in $\partial\FN$, there exists an
automorphism $\phi$ in the outer class $\Phi$ such that $\phi^n=\psi$
and such that each fixed point of $\psi$ is a fixed point of $\phi$.

From the following Proposition, we see that this extra hypothesis will
not restrict the scope of our results.

\begin{prop}[\cite{gjll}]\label{prop:rotationless}
There exists a constant $K_N$ depending only on $N$ such that for any  
irreducible (with irreducible powers), non-geometric outer automorphism $\Phi$,
the power $\Phi^{K_N}$ is forward rotationless.
\end{prop}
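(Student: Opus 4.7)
My plan is to combine the combinatorial finiteness of train-track data with the correspondence between attracting fixed points of a representative automorphism and fixed vertex--direction pairs of its train-track lift. Fix any irreducible train-track representative $\tau=(\Gamma,*,\pi,f)$ of $\Phi$. Because $\tau$ is irreducible, $\Gamma$ has no vertex of valence $1$ or $2$, so an Euler characteristic count forces the number of edges of $\Gamma$ to be at most $3N-3$. It follows that the cardinalities of (i) the periodic vertices of $f$, (ii) the periodic directions at those vertices, and (iii) the periodic indivisible Nielsen paths of $f$ are all bounded by a single constant $B_N$ depending only on $N$, not on the specific iwip $\Phi$.

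Set $K_N$ to be, say, the least common multiple of $\{1,2,\ldots,B_N\}$. Then $f^{K_N}$ pointwise fixes every periodic vertex, fixes every periodic direction at such a vertex, and fixes every periodic indivisible Nielsen path of $f$. This first step is purely combinatorial and produces the uniform integer $K_N$ required by the statement.

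To verify that $\Phi^{K_N}$ is forward rotationless, suppose $n\geq 1$ and $\psi$ in the outer class $\Phi^{nK_N}$ has strictly more than two attracting fixed points on $\partial\FN$. By the analysis of \cite{gjll} (building on \cite{bh-traintrack}), each such fixed point corresponds to a pair $(\tilde v,\tilde d)$, where $\tilde v\in\tilde\Gamma$ is a vertex fixed by the lift $\tilde g$ of $f^{nK_N}$ associated with $\psi$, and $\tilde d$ is a fixed expanding direction at $\tilde v$. By our choice of $K_N$, the projections $(v,d)$ of these pairs to $\Gamma$ are already fixed by $f^{K_N}$. I would then select, among the $\FN$-family of lifts of $f^{K_N}$, the unique lift $\tilde h$ fixing a prescribed $\tilde v$; the non-geometric hypothesis (free action of $\FN$ on $T_{\Phi\inv}$ and trivial fixed subgroups) eliminates parasitic ambiguities and forces $\tilde h^n=\tilde g$. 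The automorphism $\phi\in\Phi^{K_N}$ associated to $\tilde h$ then satisfies $\phi^n=\psi$.

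The main obstacle I expect to encounter is showing that this single lift $\tilde h$ simultaneously fixes \emph{all} the vertices $\tilde v_1,\ldots,\tilde v_r$ corresponding to the attracting fixed points of $\psi$, not merely one of them. For $r=2$, a genuine rotation could swap the pair and obstruct the existence of a common root, which is precisely why the forward rotationless condition restricts to the case $r>2$. For $r>2$, the needed rigidity is enforced by the index identity of \cite{gjll}, which bounds the total contribution to the Gaboriau--Jaeger--Levitt--Lustig index from the attracting fixed points and rules out a nontrivial cyclic permutation of three or more pairs once the underlying combinatorics on $\Gamma$ has been pinned down by $f^{K_N}$. Establishing this compatibility, together with the bookkeeping of the $\FN$-action on the set of lifts and the verification that $B_N$ really depends only on $N$, is where the technical work of the proof would be concentrated.
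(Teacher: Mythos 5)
First, a point of comparison: the paper does not prove this proposition at all --- it is quoted from \cite{gjll} (with the terminology taken from \cite{fh-recognition}), and the author explicitly declines to give an argument. So there is no in-paper proof to match; what can be assessed is whether your sketch would actually establish the statement. Your first step is sound and is indeed how the uniform constant arises in the literature: an irreducible train track has at most $3N-3$ edges, hence uniformly boundedly many vertices and directions, so all periodic vertices and periodic directions of $f$ have period dividing $K_N=\mathrm{lcm}(1,\dots,B_N)$. (Minor caveat: the uniform bound on periodic indivisible Nielsen paths is not an Euler-characteristic count but itself an output of index theory, so it should be cited, not asserted alongside (i) and (ii).)

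The genuine gap is exactly where you say the ``technical work would be concentrated,'' and it is not closed by the appeal to the index identity. Choosing $\tilde h$ to be the lift of $f^{K_N}$ with $\tilde h(\tilde v_1)=\tilde v_1$ does force $\tilde h^n=\tilde g$ (two lifts of the same map agreeing at a point coincide, since deck transformations act freely); but for a second fixed vertex $\tilde v_2$ of $\tilde g$ you only get $\tilde h(\tilde v_2)=w\tilde v_2$ with $\phi^{n-1}(w)\cdots\phi(w)\,w=1$, and this twisted-coboundary condition does not force $w=1$ (already for $n=2$ it only says $\phi(w)=w^{-1}$). So $\partial\tilde h$ may a priori permute $\mathrm{Fix}(\partial\tilde g)$ nontrivially, and you cannot repair this by raising $\tilde h$ to a further power, since that leaves the outer class $\Phi^{K_N}$. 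The assertion that the Gaboriau--Jaeger--Levitt--Lustig index inequality ``rules out a nontrivial cyclic permutation of three or more pairs'' is precisely the statement to be proved: that inequality bounds the number of isogredience classes of lifts with at least three fixed points and bounds the total index, but it does not by itself prevent the permutation above. In \cite{fh-recognition} this is handled by a separate argument comparing principal lifts of $f^{K}$ with principal lifts of its powers, and that comparison --- not the counting --- is the substance of the proposition. As written, your proposal reduces the statement to an unproved claim of the same difficulty.
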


This Proposition is true for any outer automorphism but we restricted
ourself to the easier case of irreducible non-geometric automorphisms.

\begin{prop}\label{prop:singularperiodic}
Let $\Phi$ be an irreducible (with irreducible powers) non-geometric
forward rotationless outer automorphism of $\FN$. Let $T_{\Phi\inv}$
be its repelling tree in the boundary of outer space. Let
$\tau=(\Gamma,*,\pi,f)$ be a train track representative for $\Phi$ and
let $\tilde e$ be an edge of the universal cover $\tilde\Gamma$ of
$\Gamma$.

Let $P$ be a point in $\Omega_{\tilde e}$ with stricly more than one
pre-image by $\CQ_{\tilde e}$. Then any prefix-suffix representation
$\sigma\in\CQ_{\tilde e}\inv(P)$ is pre-periodic.

  Moreover, there exists a homothety $H$ of $T_{\Phi\inv}$ associated
  to an automorphism $\phi\in\Phi$ and to a lift $\tilde f$ of $f$
  such that $H(P)=P$ and for any pre-image $\sigma\in\CQ_{\tilde
  e}\inv(P)$ the sequence $(\sigma(\tilde e,\tilde f,n))_{n\in\N}$ is
  pre-periodic.
\end{prop}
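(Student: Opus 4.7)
The argument is a three-step pigeonhole. First, $\CQ_{\tilde e}$-branching forces $P$ to have strictly more than two $\CQ$-pre-images. Second, the backward orbit $\{H^{-n}(P)\}_{n\ge 0}$ inherits this property and, by Proposition~\ref{prop:Qfinitetoone}, lies in only finitely many $\FN$-orbits; a pigeonhole then produces a homothety associated to some $\psi\in\Phi^k$ that fixes $P$. Third, the forward rotationless hypothesis descends $\psi$ to an automorphism $\phi^\star\in\Phi$ fixing all the relevant boundary points, at which stage Proposition~\ref{prop:periodicleaves} applies directly.

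For Step~1, if $\sigma_1\ne\sigma_2$ both lie in $\CQ_{\tilde e}\inv(P)$, Proposition~\ref{prop:rhoe} provides leaves $Z_i\in C_{\tilde e,\sigma_i}$ with $\CQ^2(Z_i)=P$, and Proposition~\ref{prop:lamfrac} makes these cylinders disjoint, so $Z_1\ne Z_2$. Since both leaves lie in the \emph{oriented} cylinder $C_{\tilde e}$ they cannot be flips of each other, and the four endpoints contribute at least three distinct points to $\CQ\inv(P)$. For Step~2, Lemma~\ref{lem:QH} shows that $\partial\phi\inv$ restricts to a bijection $\CQ\inv(P)\to\CQ\inv(H\inv(P))$, hence every $H^{-n}(P)$ has strictly more than two $\CQ$-pre-images. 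Proposition~\ref{prop:Qfinitetoone} and pigeonhole yield integers $m<n$ and an element $u\in\FN$ with $H^{-n}(P)=u\cdot H^{-m}(P)$; applying $H^n$ and the equivariance $H^n(uQ)=\phi^n(u)H^n(Q)$ rewrites this as $P=\phi^n(u)\,H^k(P)$ with $k=n-m$, so the homothety $H':=\phi^n(u)\,H^k$, which is associated to $\psi:=i_{\phi^n(u)}\circ\phi^k\in\Phi^k$, fixes $P$.

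For Step~3, the finite set $\CQ\inv(P)$ is preserved by $\partial\psi$ (Lemma~\ref{lem:QH} with $H'(P)=P$), so some power $\psi^j$ fixes every element pointwise. The strict contraction of $(H')^j$ at $P$ identifies these $\ge 3$ fixed points of $\partial\psi^j$ as attracting fixed points in $\partial\FN$, so the forward rotationless hypothesis produces $\phi^\star\in\Phi$ with $(\phi^\star)^{kj}=\psi^j$ fixing every fixed point of $\psi^j$. Let $\tilde f^\star$ be the lift of $f$ associated to $\phi^\star$ and $H^\star$ the corresponding homothety of $T_{\Phi\inv}$. Since in the non-geometric case the homothety associated to a given automorphism is unique (an isometry commuting with the free minimal $\FN$-action is trivial), $(H^\star)^{kj}=(H')^j$, hence $(H^\star)^{kj}(P)=P$; the unique fixed point of a strictly contracting homothety of an $\R$-tree then forces $H^\star(P)=P$. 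For any $Z=(X,Y)\in C_{\tilde e}\cap{\CQ^2}\inv(P)$ both coordinates are fixed by $\partial\phi^\star$, so $\partial^2\phi^\star(Z)=Z$, and Proposition~\ref{prop:periodicleaves} applied to $\phi^\star$ and $\tilde f^\star$ delivers pre-periodicity of $\rho_{\tilde e}(Z)$ and of the sequence $(\rho_{\tilde e}(Z)(\tilde e,\tilde f^\star,n))_{n\in\N}$. Since every $\sigma\in\CQ_{\tilde e}\inv(P)$ arises as $\rho_{\tilde e}(Z)$ for such a $Z$, both conclusions follow with $\phi:=\phi^\star$, $\tilde f:=\tilde f^\star$, $H:=H^\star$.

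\textbf{Main obstacle.} The delicate point is the attracting-fixed-point claim at the start of Step~3: one must verify that the $\ge 3$ points of $\CQ\inv(P)$ fixed by $\partial\psi^j$ are genuinely \emph{attracting} fixed points in $\partial\FN$, which is precisely the hypothesis required to invoke forward rotationless. This rests on the standard — but in the non-geometric setting non-trivial — correspondence between attracting boundary fixed points of a train-track representative and $\CQ$-pre-images of fixed points of contracting homotheties on the repelling tree; once this is granted, the remaining steps are routine applications of the machinery of Sections~\ref{sec:lamaut} and~\ref{sec:tree}.
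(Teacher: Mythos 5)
Your overall architecture coincides with the paper's: (i) two $\CQ_{\tilde e}$-pre-images force at least three $\CQ$-pre-images; (ii) equivariance of $\CQ$ under the homothety plus Proposition~\ref{prop:Qfinitetoone} and a pigeonhole along the $H$-orbit of $P$ produce $u\in\FN$ and $n$ with $uH^n(P)=P$, i.e.\ a homothety associated to an automorphism in a power of $\Phi$ fixing $P$; (iii) finiteness of the $\CQ^2$-fiber gives a further power fixing every leaf over $P$, forward rotationlessness descends this to some $\phi^\star\in\Phi$, and Proposition~\ref{prop:periodicleaves} concludes. Steps (i) and (ii) are fine (the paper runs (ii) forward rather than backward, which is immaterial), and your bookkeeping with the uniqueness of the homothety associated to an automorphism to get $H^\star(P)=P$ is a harmless detour around the one-line computation $H^\star(P)=H^\star(\CQ^2(Z))=\CQ^2(\partial^2\phi^\star(Z))=\CQ^2(Z)=P$ from Lemma~\ref{lem:Q2H}.

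However, the step you yourself single out as the main obstacle is justified by an argument that does not work. You claim that ``the strict contraction of $(H')^j$ at $P$ identifies these $\geq 3$ fixed points of $\partial\psi^j$ as attracting fixed points.'' But by Lemma~\ref{lem:QH}, \emph{every} fixed point $X$ of $\partial\psi^j$ satisfies $\CQ(X)=\CQ(\partial\psi^j(X))=(H')^j(\CQ(X))$, so $\CQ(X)$ is the fixed point of the contracting homothety whether $X$ is attracting, repelling, or neither; the contraction on $T_{\Phi\inv}$ carries no information distinguishing the dynamical type of a boundary fixed point, and the ``correspondence'' you defer to only goes one way (attracting fixed points land on the fixed point of $H$, not conversely). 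The paper closes this gap differently and this is the idea you are missing: the relevant points are the two ends of a leaf $Z$ of the \emph{attracting lamination} fixed by $\partial^2\psi^j$; such a leaf is legal for the train track, the train-track map restricted to a legal bi-infinite path is an expansion by $\lambda_\Phi$ of its geometric realisation onto itself, and an expanding self-map of a line fixing both ends has both ends attracting. Replace your homothety argument by this train-track expansion argument and the proof matches the paper's.
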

\begin{proof}
  As $P$ has stricly more than one pre-image by $\CQ_{\tilde e}$, it
  has at least three different pre-images by $\CQ$. Let $\phi\in\Phi$
  be an automorphism in the outer class $\Phi$ and let $H$ be the
  associated homothety of $T_{\Phi\inv}$. For each integer $n$, by
  Lemma~\ref{lem:QH}, $\CQ\inv(H^n(P))=\phi^n(\CQ\inv(P))$ and by
  Proposition~\ref{prop:Qfinitetoone}, there exists an integer $n\geq 1$ and
  an element $u$ of $\FN$ such that $uH^n(P)=P$. 

  Let $\sigma$ be a pre-image by $\CQ_{\tilde e}$ of $P$. For any line
  $Z\in C_{\tilde e,\sigma}$, by Lemma~\ref{lem:Q2H}, and for any
  $k\in\N$, $(u(\partial^2\phi)^n)^k(Z)$ is also in the $\CQ^2$ fiber
  of $P$.  By Corollary~\ref{cor:Q2finitetoone}, $C_{\tilde
  e,\sigma}$ is finite and thus there exists $k\geq 1$ such that
\[
\forall Z\in C_{\tilde e}, \CQ^2(Z)=P \Rightarrow (u(\partial^2\phi)^n)^k(Z)=Z.
\]
Thus, elements $Z\in C_{\tilde e}$ in the $\CQ^2$ fiber of $P$ are
fixed points of the automorphism $\psi=(i_u\circ\phi^n)^k$ of the
outer class $\Phi^{nk}$. Moreover as $\CQ^2(Z)=P$, they are in the
attracting lamination $\Latt$ and thus there two ends are attracting
fixed points of $\psi$. As $\Phi$ was assumed to be forward
rotationless, there exists an automorphism $\phi'$ in the outer class
$\Phi$ that fixes all the elements $Z\in C_{\tilde e}$ such that
$\CQ^2(Z)=P$.  Let now $H'$ be the homothety associated to $\phi'$,
then $H'(P)=P$ and applying Proposition~\ref{prop:periodicleaves} we
proved the Proposition.
\end{proof}

A point $P$ in $\Omega_{\tilde e}$ is \textbf{first-singular}, if
there exists two distinct $e$-paths $\sigma$ and $\sigma'$ of length
$1$ such that $P\in\Omega_{\tilde e,\sigma}\cap\Omega_{\tilde
e,\sigma'}$. As there are finitely many $e$-path of length $1$, from
Proposition~\ref{prop:omegadisjoint} we get that there are finitely many first-singular
points in $\Omega_{\tilde e}$.

We prove the following technical result that we will use in the sequel.

\begin{lem}\label{lem:Qetildebound}
  Let $\tilde e$ be an edge of $\tilde\Gamma$ that lies above an edge
  $e$ of $\Gamma$. Let $n$ be an integer. Let $N$ be a set of $e$-paths of length $n$
  such that
\[
\forall\sigma,\sigma'\in N, \Omega_{\tilde e,\sigma}\cap\Omega_{\tilde e,\sigma'}\neq\emptyset.
\]
The cardinality of $N$ is bounded above by a constant $C_1$ depending only on $\Phi$.
\end{lem}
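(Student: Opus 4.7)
The plan is to reduce to a statement about edges of $\tilde\Gamma$ and then combine a uniform bound on $\CQ^2$-fibers with a Helly-type argument in the $\R$-tree $T_{\Phi\inv}$.

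First, by Proposition~\ref{prop:omegasigma} we have $\Omega_{\tilde e,\sigma}=H^n(\Omega_{\sigma(\tilde e,\tilde f,n)})$, and $H^n$ is a bijection of $\bar T_{\Phi\inv}$. I would first observe that $\sigma\mapsto\tilde a_\sigma=\sigma(\tilde e,\tilde f,n)$ is injective on $N$: from $\tilde a_\sigma$ one recovers the reduced path $\tilde f^n(\tilde a_\sigma)$, in which $\tilde e$ occurs at a unique position, and reading off the successive prefixes and suffixes around $\tilde e$ determines $\sigma$ uniquely. The hypothesis on pairwise intersections therefore transports to: the edges $\tilde a_\sigma$ (for $\sigma\in N$) of $\tilde\Gamma$ satisfy $\Omega_{\tilde a_\sigma}\cap\Omega_{\tilde a_{\sigma'}}\neq\emptyset$ whenever $\sigma\neq\sigma'$. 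It thus suffices to bound, uniformly in $\Phi$, the cardinality of any family $E$ of edges of $\tilde\Gamma$ whose $\Omega_{\tilde a}$-sets pairwise intersect.

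Second, I would establish a uniform bound $K=K(\Phi)$ on the cardinality of $(\CQ^2)^{-1}(P)$ over all $P\in\hat T_{\Phi\inv}$. By Corollary~\ref{cor:Q2finitetoone} the fibers of $\CQ^2$ are finite and only finitely many $\FN$-orbits of points have more than two pre-images; the $\FN$-equivariance $\CQ^2(uZ)=u\CQ^2(Z)$ makes these fiber sizes constant along orbits, so the maximum is attained on finitely many orbits and hence bounded. The same $K$ then bounds $|\CQ_{\tilde a}^{-1}(P)|$ for every edge $\tilde a$, since $\CQ_{\tilde a}^{-1}(P)$ embeds into $(\CQ^2)^{-1}(P)$ by Proposition~\ref{prop:omegasigma}(3).

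Third, I would take convex hulls $\hat\Omega_{\tilde a}$ in $T_{\Phi\inv}$; these are subtrees of the $\R$-tree, and since they pairwise intersect they admit a common point $Q$ by Helly's property in $\R$-trees. At $Q$ each $\hat\Omega_{\tilde a}$ is ``seen'' through a finite set of directions; using the fiber bound $K$ and the description $\Omega_{\tilde a}=\CQ^2(C_{\tilde a})$, I would argue that at most $K$ edges of $E$ can contribute $\Omega$-points approaching $Q$ from the same direction, because otherwise some point arbitrarily close to $Q$ along that direction would have strictly more than $K$ pre-images under $\CQ^2$.

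The main obstacle is to convert this local picture at $Q$ into a global bound $|E|\le C_1(\Phi)$: one must control the valence (number of directions) at an arbitrary point of $\bar T_{\Phi\inv}$, which for a minimal very small action of $\FN$ on an $\R$-tree is uniformly bounded in terms of $N$ by the standard branch-point index inequalities. Combining this valence bound with the per-direction bound $K$ yields the desired constant $C_1$ depending only on $\Phi$.
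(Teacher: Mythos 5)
Your first reduction is where the argument breaks. The map $\sigma\mapsto\tilde a_\sigma=\sigma(\tilde e,\tilde f,n)$ is indeed injective (the train-track property makes $\tilde f^{\,n-1}$ injective on the legal path $\tilde f(\tilde a_\sigma)$, so the chain of intermediate edges is recovered by downward induction), but the statement you reduce to --- that any family of edges $\tilde a$ of $\tilde\Gamma$ whose sets $\Omega_{\tilde a}$ pairwise intersect has cardinality bounded in terms of $\Phi$ alone --- is false. A single leaf $Z\in\Latt$ lies in $C_{\tilde a}$ for every one of the infinitely many edges $\tilde a$ crossed by its geometric realisation, so the point $P=\CQ^2(Z)$ belongs to $\Omega_{\tilde a}$ for all of them, and these sets pairwise intersect. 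The same confusion undoes the ``per-direction'' count in your third step: the uniform fiber bound $K$ controls the number of \emph{leaves} mapping to a given point, not the number of cylinders $C_{\tilde a}$ containing such a leaf, and one leaf accounts for membership of its image in infinitely many $\Omega_{\tilde a}$. So neither the Helly point $Q$ nor a valence bound can rescue the count. What your reduction discards is exactly the information that makes the lemma true: all the $\sigma\in N$ are $e$-paths based at the \emph{same} edge $e$, and for a fixed point $P$ the fiber $\CQ_{\tilde e}\inv(P)$ of infinite $e$-paths is finite because, by Proposition~\ref{prop:rhoe}, distinct infinite $e$-paths correspond to \emph{disjoint} subsets $C_{\tilde e,\sigma}$ of the single cylinder $C_{\tilde e}$, each of which must contain its own $\CQ^2$-pre-image of $P$.

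The paper's proof stays in the world of $e$-paths. Strip off the longest common prefix of $N$ (self-similarity, via Proposition~\ref{prop:omegasigma}, lets you do this without changing the cardinality); then every $\sigma\in N$ meets some $\sigma'\in N$ with a different first edge, so any point of $\Omega_{\tilde e,\sigma}\cap\Omega_{\tilde e,\sigma'}$ is one of the finitely many first-singular points of $\Omega_{\tilde e}$ (finiteness coming from Proposition~\ref{prop:omegadisjoint}). Hence each $\sigma\in N$ is the length-$n$ prefix of one of the finitely many elements of $\CQ_{\tilde e}\inv(P)$ for one of finitely many points $P$, which bounds $|N|$ by the number of first-singular points times the maximal fiber size of $\CQ_{\tilde e}$, independently of $n$. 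If you want to keep the Helly/valence idea, it would have to be run for the sets $\Omega_{\tilde e,\sigma}$ themselves, keeping track of the prefix $\sigma$ rather than only the terminal edge $\tilde a_\sigma$.
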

\begin{proof}
  Let $\sigma_0$ be the common prefix of all the elements of $N$. By
  self-similarity, we can replace $\Omega_{\tilde e}$ by
  $\Omega_{\tilde e,\sigma_0}$ and $N$ by the set $N'$ of suffixes
  $\sigma'$ of elements $\sigma=\sigma_0.\sigma'$ of $N$. Thus we
  assume that $\sigma_0$ has zero-length and that $N$ has strictly more than $1$ element.

  For each $\sigma$ in $N$, the set $\Omega_{\tilde e,\sigma}$
  contains a first-singular point $P$ and thus $\sigma$ is the prefix of
  length $n$ of one of the finitely many pre-images by $\CQ_{\tilde
    e}$ of $P$.
\end{proof}

\subsection{Hausdorff dimension and measure}

We refer to the book of K.~Falconer \cite{falc} for definitions of the
Hausdorff dimension and measure. 

For a metric space $(A,d)$, for any $\epsilon>0$, and $k>0$, let
\[
\mathcal{H}^k_\epsilon(A)=\inf \sum_{i\in\N} |A_i|^k\in\R^+\cup\{\infty\}
\]
where the infimum is taken over all coverings $(A_i)_{i\in\N}$ of $A$
such that the diameter $|A_i|$ of each $A_i$ is smaller than
$\epsilon$. If $A$ is compact, there are finite coverings of $A$ with
closed balls of diameter $\epsilon$, therefore
$\mathcal{H}^k_\epsilon(A)$ is finite.

For a homothety $H$ of ratio $\lambda$, one has
\[
\mathcal{H}^k_\epsilon(H(A))=\lambda^k\mathcal{H}^k_{\lambda\epsilon}(A).
\]

For a fixed $k>0$, $\epsilon\mapsto \mathcal{H}^k_\epsilon(A)$ is
decreasing and the \textbf{Hausdorff measure in dimension $k$} of $A$ is
\[
\mathcal{H}^k(A)= \lim_{\epsilon\to 0}\mathcal{H}^k_\epsilon(A)=
\sup_{\epsilon>0}\mathcal{H}^k_\epsilon(A)\in\R^+\cup\{\infty\}.
\]
Again, for a homothety $H$ of ratio $\lambda$, one has
\[
\mathcal{H}^k(H(A))=\lambda^k\mathcal{H}^k(A).
\]
 
The map $k\mapsto \mathcal{H}^k(A)$ is decreasing and takes values in
$\{0,\infty\}$ except in at most one point.  The \textbf{Hausdorff
dimension} of $A$ is
\[
\hdim(A)=\inf\{k\ |\ \mathcal{H}^k(A)=0\}=\sup\{k\ |\
\mathcal{H}^k(A)= \infty\}\in\R^+\cup\{\infty\}.
\]

From these definitions it is classical to deduce that the Hausdorff
dimension of a countable union $\cup_{i\in\N}X_i$ of subspaces of $A$
is the supremum of the dimensions of the $X_i$. In particular the
Hausdorff dimension of the limit set $\Omega$ is the maximum of the
Hausdorff dimension of the compact subsets $\Omega_{\tilde e}$, for all edges
$\tilde e$ of $\tilde \Gamma$.

\subsection{Main Theorem}\label{sec:mainthm}

The usual context to compute the
Hausdorff dimension of a self-similar set (or of a graph directed
construction) is inside $\R^n$ which is not the case here. Also, the
classical hypothesis to get the lower bound on the Hausdorff dimension
is by using the open set condition, of which we need to use a
non-classic version. We refer to \cite{mw} and \cite{edgar} for
computation of the Hausdorff dimension of graph directed constructions
and before them to the original article of J.~Hutchinson~\cite{hutch}
in the case of an iterated function system.

We are now ready to state and prove our main theorem.

\begin{thm}\label{thm:hausdim}
Let $\Phi$ be an irreducible (with irreducible powers) outer automorphism of the free group
$\FN$. Let $\tau=(\Gamma,*,\pi,f)$ be an irreducible train-track representative for
$\Phi$ and let $T_{\Phi\inv}$ be the repelling tree of $\Phi$.
  
The limit set $\Omega\subseteq\bar T_{\Phi\inv}$, and for each edge $\tilde e$ of $\tilde \Gamma$, the set
$\Omega_{\tilde e}\subseteq\bar T_{\Phi\inv}$ have Hausdorff dimension
\[
\delta=\hdim(\Omega)=\frac{\ln\lambda_\Phi}{\ln\lambda_{\Phi\inv}}
\]
where $\lambda_\Phi$ and $\lambda_{\Phi\inv}$ are the expansion
factors of $\Phi$ and $\Phi\inv$ respectively.
\end{thm}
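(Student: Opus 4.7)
The plan is to establish matching upper and lower bounds on $\hdim(\Omega_{\tilde e})$ for each edge $\tilde e$ of $\tilde\Gamma$; since $\Omega=\FN.\Omega_{\tilde e}$ is a countable $\FN$-orbit and Hausdorff dimension is isometry-invariant and stable under countable unions, the corresponding bound for $\Omega$ follows immediately. Throughout, I write $\lambda=1/\lambda_{\Phi\inv}$ for the contraction ratio of $H$, and note the key arithmetic identity $\lambda_\Phi\lambda^\delta=1$, which is simply a restatement of $\delta=\ln\lambda_\Phi/\ln\lambda_{\Phi\inv}$.

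For the upper bound, I would iterate the self-similar decomposition via Proposition~\ref{prop:omegasigma}, writing $\Omega_{\tilde e}=\bigcup_{|\sigma|=n}\Omega_{\tilde e,\sigma}$ with $\Omega_{\tilde e,\sigma}=H^n(\Omega_{\sigma(\tilde e,\tilde f,n)})$ of diameter at most $\lambda^nD_{\max}$, where $D_{\max}=\max_{e'}|\Omega_{\tilde e'}|$ is finite because there are only finitely many $\FN$-orbits of edges and each $\Omega_{\tilde e'}$ is compact. The number of $e$-paths of length $n$ ending at $e'$ is an entry of $M^n$, hence by Perron-Frobenius $\sum_{|\sigma|=n}1\leq C\lambda_\Phi^n$ for some constant $C$. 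Therefore
\[
\mathcal{H}^{\delta}_{\lambda^nD_{\max}}(\Omega_{\tilde e})\leq\sum_{|\sigma|=n}|\Omega_{\tilde e,\sigma}|^\delta\leq CD_{\max}^{\delta}\lambda_\Phi^n\lambda^{n\delta}=CD_{\max}^{\delta},
\]
which is independent of $n$; letting $n\to\infty$ gives $\mathcal{H}^{\delta}(\Omega_{\tilde e})<\infty$, so $\hdim(\Omega_{\tilde e})\leq\delta$.

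For the lower bound, I would apply the mass distribution principle to the push-forward current $\nu_\Phi$ restricted to $\Omega_{\tilde e}$. By Lemma~\ref{lem:nuphicylindre}, $\nu_\Phi(\Omega_{\tilde e,\sigma})=2\mu_{e'}/\lambda_\Phi^{|\sigma|}$, and using $\lambda_\Phi\lambda^\delta=1$ this can be rewritten as $\nu_\Phi(\Omega_{\tilde e,\sigma})\leq C'|\Omega_{\tilde e,\sigma}|^{\delta}$. It therefore suffices to establish a Frostman-type estimate $\nu_\Phi(B)\leq C''r^\delta$ for every ball $B=B(P,r)$ in $\bar T_{\Phi\inv}$. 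Given $r>0$, choose $n$ with $\lambda^nD_{\max}\leq r<\lambda^{n-1}D_{\max}$; then every piece $\Omega_{\tilde e,\sigma}$ of length $n$ meeting $B$ is contained in the concentric ball of radius $3r$, and up to invoking the \emph{non-classical open set condition} the number of such pieces is bounded by a constant $C_2$ independent of $n$, so that summing $\nu_\Phi(\Omega_{\tilde e,\sigma})$ over such pieces yields $\nu_\Phi(B)\leq C_2C'(\lambda^n D_{\max})^\delta\leq C''r^\delta$. Assembling this with $\nu_\Phi(\Omega_{\tilde e})>0$ (since $\Omega_{\tilde e}$ carries positive mass) gives $\hdim(\Omega_{\tilde e})\geq\delta$ via the mass distribution principle.

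The main obstacle is precisely the non-classical open set condition: extending Lemma~\ref{lem:Qetildebound}, which bounds the number of length-$n$ pieces sharing a common point, to a uniform bound on the number of pieces meeting a ball of radius comparable to $\lambda^n$. To handle this I would rescale by $H^{-n}$ (which maps the ball to one of bounded radius and maps the length-$n$ pieces to first-level pieces $\Omega_{\tilde e'}$), then use the finiteness of the $\FN$-orbits of edges together with the control provided by Proposition~\ref{prop:singularperiodic} on pre-periodic singular prefix-suffix representations: common points of distinct pieces are pre-periodic and hence constitute only finitely many $\FN$-orbits, from which the desired uniform packing bound follows. Passing to a forward rotationless power via Proposition~\ref{prop:rotationless} loses no generality since both $\lambda_\Phi$ and $\lambda_{\Phi\inv}$ are raised to the same power, leaving $\delta$ unchanged.
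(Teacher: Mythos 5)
Your overall architecture matches the paper's: the upper bound via Perron--Frobenius applied to the self-similar covering by the $\Omega_{\tilde e,\sigma}$, and the lower bound via a Frostman estimate for the push-forward current $\nu_\Phi$ combined with a uniform packing bound on the level-$n$ pieces near a point. The upper bound as you write it is complete and correct (it is a scalarized version of the paper's coordinatewise matrix inequality). You also correctly isolate the crux of the lower bound and the relevant tool (Proposition~\ref{prop:singularperiodic}).

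However, the lower bound has a genuine gap exactly at the step you flag as ``the main obstacle,'' and your proposed resolution does not close it. Rescaling by $H^{-n}$ sends the length-$n$ pieces to the sets $\Omega_{\sigma(\tilde e,\tilde f,n)}$, which are translates of the finitely many $\Omega_{e'}$, and sends $B(P,r)$ to a ball of bounded radius; but a ball of bounded positive radius in $\bar T_{\Phi\inv}$ meets \emph{infinitely} many translates $u\Omega_{e'}$ (the action has dense orbits and $\Omega=\FN.\Omega_{\tilde e}$), so ``finiteness of $\FN$-orbits of edges'' plus finiteness of orbits of singular points does not by itself bound the count. What is missing is a quantitative separation statement, the paper's Lemma~\ref{lem:lowerbound}: any two \emph{disjoint} level-$n$ pieces satisfy $\gap(\Omega_{\tilde e,\sigma},\Omega_{\tilde e,\sigma'})>C_2\,\lambda_{\Phi\inv}^{-n}$. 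Its proof is the delicate point: one reduces to the level where $\sigma$ and $\sigma'$ first separate, observes that their common parent pieces meet in a first-singular point, and uses Proposition~\ref{prop:singularperiodic} to force the associated prefix-suffix representations to be pre-periodic, so that the terminal edges $\sigma(\tilde e,\tilde f,n)$, $\sigma'(\tilde e,\tilde f,n)$ range over a \emph{finite} set and the rescaled gaps $\gap(\Omega_{\tilde e_n},\Omega_{\tilde e'_n})$ admit a uniform positive lower bound. Only with this gap lemma in hand can one argue (as in the paper's Lemma~\ref{lem:NPn}) that any two pieces within distance $\approx\lambda_{\Phi\inv}^{-n}$ of $P$ must have \emph{intersecting} prefixes at level $n-k$ for a fixed $k$, whereupon Lemma~\ref{lem:Qetildebound} bounds their number by a constant. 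Your sketch jumps from ``intersection points lie in finitely many orbits'' to ``the packing bound follows,'' which skips precisely this two-step reduction. (A minor additional remark: your rewriting $\nu_\Phi(\Omega_{\tilde e,\sigma})\leq C'|\Omega_{\tilde e,\sigma}|^\delta$ tacitly assumes each $\Omega_{e'}$ has positive diameter; it is cleaner to bound $\nu_\Phi(\Omega_{\tilde e,\sigma})=\nu_\Phi(\Omega_{\sigma(\tilde e,\tilde f,n)})\lambda_\Phi^{-n}\leq r^\delta\max_{e'}\nu_\Phi(\Omega_{e'})$ directly from the choice of $n$, as the paper does.)
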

\begin{proof}
  The limit set $\Omega$ is the union of translates by elements of
  $\FN$ of any $\Omega_{\tilde e}$. Thus the Hausdorff dimensions of
  these sets are all equal. 

  The repelling tree $T_{\Phi\inv}$ and the attracting lamination
  $L_\Phi$ do not change if we replace $\Phi$ by a power. Also, the
  expansion factor of a power $\Phi^n$ is $\lambda_{\Phi}^n$. Thus by
  Proposition~\ref{prop:rotationless}, up to replacing $\Phi$ by a
  suitable power, we assume that $\Phi$ is forward rotationless.

  For each edge $e$ of $\Gamma$ we choose one of its lifts in the
  universal cover $\tilde\Gamma$ and we denote by $\Omega_e$ the
  corresponding subset of $\bar T_{\Phi\inv}$. We denote by
  $E(\Gamma)$ the set of edges of $\Gamma$.

  From Proposition~\ref{prop:heartfrac}, we see that each of the
  pieces $\Omega_{e}$ is covered by finitely many translates of
  homothetic copies of the $\Omega_{e'}$. The number of copies
  used is given by the corresponding row in the transition matrix
  $M$ of the train-track $\tau$.  From the definition of the
  Hausdorff dimension it is straightforward to deduce for any
  $\epsilon>0$ and any $k>0$
\[
(\mathcal{H}^k_{\frac{\epsilon}{\lambda_{\Phi\inv}}}(\Omega_{e}))_{e
\in E(\Gamma)}\leq M(\mathcal{H}^k_{\frac{\epsilon}{\lambda_{\Phi\inv}}}(H(\Omega_{e'})))_{e'\in E(\Gamma)
}=\frac{1}{(\lambda_{\Phi\inv})^k}M(\mathcal{H}^k_\epsilon(\Omega_{e'}))_{e'
\in E(\Gamma)}
\]
where the comparison between these positive vectors is made
coordinatewise.

As the Perron-Frobenius eigen-value of $M$ is the expansion factor
$\lambda_\Phi$ of $\Phi$, we get by iteration that if
$k>\delta=\frac{\ln\lambda_\Phi}{\ln\lambda_{\Phi\inv}}$ for any edge
$\tilde e$ of $\tilde \Gamma$, $\mathcal{H}^k(\Omega_{\tilde e})=0$.
Also, if $k=\delta$ we get that
$(\mathcal{H}^\delta(\Omega_{e}))_{e\in E(\Gamma)}$ is bounded above
by the Perron-Frobenius eigen-vector of $M$. In particular, for any
edge $\tilde e$ of $\Gamma$, $\mathcal{H}^\delta(\Omega_{\tilde
  e})<\infty$.

This gives an upper bound for the Hausdorff dimension of each of the
$\Omega_{\tilde e}$ and an upper bound for the Hausdorff measure in dimension
$\delta$.

We now proceed to get the lower bound of the Hausdorff dimension and
measure. This involves describing quantitatively how much the maps
$\CQ_{\tilde e}$ fails to be injective and to evaluate how much they
contract the distances.

For two subsets $C$ and $C'$ of $\bar T_{\Phi\inv}$ we denote by
$\gap(C,C')$ the size of the gap between them:
\[
\gap(C,C')=\inf\{d(P,P')\ |\ P\in C, P'\in C'\}.
\]

We decompose the proof into three Lemmas.

\begin{lem}\label{lem:lowerbound}
  Let $\sigma$ and $\sigma'$ be two $e$-paths of
  length $n$ in $\Sigma_{e}$ such that $\Omega_{\tilde e,\sigma}\cap
  \Omega_{\tilde e,\sigma'}=\emptyset$. 

  There exists a constant $C_2>0$ depending only on $\Phi$ such that the
  gap between $\Omega_{ \tilde e,\sigma}$ and $\Omega_{\tilde e,\sigma'}$ is bigger
  than $\frac{C_2}{(\lambda_{\Phi\inv})^{n}}$:
\[
\gap(\Omega_{ \tilde e,\sigma},\Omega_{\tilde e,\sigma'})>\frac{C_2}{(\lambda_{\Phi\inv})^{n}}.
\]
\end{lem}
\begin{proof}
  By self-similarity, up to removing a common prefix to $\sigma$ and $\sigma'$
  and applying a homothety $H$, we assume that $\sigma$ and
  $\sigma'$ have different first edges $\sigma_1$ and
  $\sigma'_1$. 

  Let $0\leq p<n$ be the maximal length of prefixes $\sigma_p$ and
  $\sigma'_{p}$ of $\sigma$ and $\sigma'$ respectively such that
  $\Omega_{ \tilde e,\sigma_p}\cap\Omega_{\tilde
    e,\sigma'_p}\neq\emptyset$. As $\Omega_{ \tilde
    e,\sigma}\subseteq\Omega_{ \tilde e,\sigma_{p+1}}$ and $\Omega_{
    \tilde e,\sigma'}\subseteq\Omega_{ \tilde e,\sigma'_{p+1}}$ we get
  that
\[
\gap(\Omega_{ \tilde e,\sigma},\Omega_{\tilde e,\sigma'}) \geq
\gap(\Omega_{ \tilde e,\sigma_{p+1}},\Omega_{ \tilde
  e,\sigma'_{p+1}})>0.
\]
Thus, replacing $\sigma$ and $\sigma'$ by their prefixes of length
$p+1$, we assume that $p+1=n=|\sigma|=|\sigma'|$.

If $n=1$ (that is to say $p=0$) then there are only finitely many
choices of paths $\sigma$ and $\sigma'$ and $C_2$ has to be smaller than
the minimum of the gaps between all such possible choices of $\Omega_{
  \tilde e,\sigma}$ and $\Omega_{\tilde e,\sigma'}$.

Thus we assume that $n>1$ (and that $p=n-1>0$).

Let $P$ be a point in $\Omega_{\tilde e,\sigma_{n-1}}\cap
\Omega_{\tilde e,\sigma'_{n-1}}$. As $\sigma$ and $\sigma'$ does not have
common prefixes, $P$ is one of the finitely many first-singular points
in $\Omega_{\tilde e}$. Let $Z$ and $Z'$ be pre-images of $P$ by $\CQ^2$
in $C_{\tilde e,\sigma_{n-1}}$ and $C_{\tilde e,\sigma'_{n-1}}$
respectively. The point $P$ has at least two different pre-images by
$\CQ_{\tilde e}$, thus we can use Proposition~\ref{prop:singularperiodic} to
get that the pre-images by $\CQ_{\tilde e}$ of $Z$ and $Z'$ are
pre-periodic. $\sigma_{n-1}$ and $\sigma'_{n-1}$ are prefixes of two of
these pre-images. We also get a homothety $H$ of $T_{\Phi\inv}$ and an
associated automorphisms $\phi\in\Phi$ and lift $\tilde f$ of $f$ such
that the sequences $(\sigma_{n-1}(\tilde e,\tilde f,k))_{0\leq k\leq
  n-1}$ and $(\sigma'_{n-1}(\tilde e,\tilde f,k))_{0\leq k\leq n-1}$
only takes finitely many values. As the prefix-suffix automaton
$\Sigma$ is finite the terminal edges $\tilde e_n=\sigma(\tilde
e,\tilde f,n)$ and $\tilde e'_n=\sigma'(\tilde e,\tilde f,n)$ takes
only finitely possible values.

From our definitions
\[
\Omega_{ \tilde e,\sigma}=H^n(\Omega_{\tilde e_n})\mbox{ and }\Omega_{\tilde e,\sigma'}=H^n(\Omega_{\tilde e'_n}),
\]
thus, the lower bound of the gap is now given by:
\[
\gap(\Omega_{ \tilde e,\sigma},\Omega_{\tilde e,\sigma'}) = \frac
1{(\lambda_{\Phi\inv})^n}\gap(\Omega_{\tilde e_n},\Omega_{\tilde
  e'_n}).
\]
The existence of the constant $C_2$ follows from the finiteness of the
number of possible choices for $\tilde e_n$ and $\tilde e'_n$. 

For the sake of clarity, let us review this finiteness again. Using
the action of $\FN$ by isometries, we only need to consider one choice
of a lift $\tilde e$ of each edge $e$ of $\Gamma$. For each of these
$\tilde e$ we consider the finitely many first-singular points $P$ in
$\Omega_{\tilde e}$. For each of these first-singular points $P$ we
consider the associated lift $\tilde f$ of $f$ as given by
Proposition~\ref{prop:singularperiodic} and their finitely many
pre-periodic pre-images $\sigma$ by $\CQ_{\tilde
  e}$. Proposition~\ref{prop:singularperiodic} states that the sequence
$(\sigma(\tilde e,\tilde f,k))_{k\in\N}$ is contained in a finite set
$E_{\tilde e,P}$ of edges of $\tilde\Gamma$. Finally the edges $\tilde e_n$ and
$\tilde e'_n$ are among the finitely many edges of $\tilde\Gamma$ such
that $\tilde f(\tilde e_n)$ and $\tilde f(\tilde e'_n)$ contain one of
the edges of $E_{\tilde e,P}$.
\end{proof}

Let $P$ be a point in $\Omega_{\tilde e}$ and let $n$ be an
integer. We consider the following subset of $\Sigma_{e}$:
\[
N(P,n)=\{\sigma\in\Sigma_{e}\ |\ |\sigma|=n,
\gap(P,\Omega_{\tilde e,\sigma})\leq (\frac 1{\lambda_{\Phi\inv}})^n\}.
\]
The use of the set $N(P,n)$ is classical while proving lower bounds
for Hausdorff dimension and measure.

\begin{lem}\label{lem:NPn}
  There exists a constant $C_3$ depending only on the outer automorphism
  $\Phi$ such that for any point $P$ in $\Omega_{\tilde e}$ and any
  integer $n$, the set $N(P,n)$ has at most $C_3$ elements.
\end{lem}
\begin{proof}
  Let $k=\lceil\frac{\ln\frac
    2{C_2}}{\ln\lambda_{\Phi\inv}}\rceil$. For any elements
  $\sigma$ and $\sigma'$ in $N(P,n)$, by definition
  $\gap(\Omega_{\tilde e,\sigma},\Omega_{\tilde e,\sigma'})\leq
  \frac{2}{{\lambda_{\Phi\inv}}^n}$. We consider the prefixes,
  $\sigma_{n-k}$ and $\sigma'_{n-k}$ of $\sigma$ and
  $\sigma'$ of length $n-k$. The sets
  $\Omega_{\tilde e,\sigma_{n-k}}$ and $\Omega_{\tilde e,\sigma'_{n-k}}$
  contains the sets $\Omega_{\tilde e,\sigma}$ and
  $\Omega_{\tilde e,\sigma'}$, thus
\[
\gap(\Omega_{\tilde e,\sigma_{n-k}},\Omega_{\tilde e,\sigma'_{n-k}})\leq
\gap(\Omega_{\tilde e,\sigma},\Omega_{\tilde e,\sigma'})\leq
  \frac{2}{(\lambda_{\Phi\inv})^n}\leq \frac{C_2}{(\lambda_{\Phi\inv})^{n-k}}.
\]
From Lemma~\ref{lem:lowerbound} we get that these two sets are not disjoints:
\[
\Omega_{\tilde e,\sigma_{n-k}}\cap\Omega_{\tilde e,\sigma'_{n-k}}\neq\emptyset.
\]
From Lemma~\ref{lem:Qetildebound} the number of prefixes of length
$n-k$ of $N(P,n)$ is bounded above by $C_1$. Thus the cardinality of
$N(P,n)$ is bounded above by $C_3=C_1.(E)^k$, where $E$ is the number
of edges of the prefix-suffix automaton $\Sigma$.
\end{proof}

For a point $P$ in $\Omega_{\tilde e}$ and $r>0$ we denote by $B(P,r)$
the ball of radius $r$ in $\Omega_{\tilde e}$.

\begin{lem}\label{lem:nur}
There exists a constant $C_4$ depending only on $\Phi$ such that 
\[
\nu_\Phi(B(P,r))\leq C_4 r^\delta.
\]
\end{lem}
\begin{proof}
  Let $n=\lfloor\frac{\ln\frac 1r}{\ln\lambda_{\Phi\inv}}\rfloor$. For
  any point $Q$ in $B(P,r)$, let $\sigma$ be the prefix of length $n$
  of some pre-image by $\CQ_{\tilde e}$ of $Q$. Then $Q\in\Omega_{\tilde e,\sigma}$ and 
\[
\gap(P,\Omega_{\tilde e,\sigma})\leq d(P,Q)\leq r\leq (\frac 1{\lambda_{\Phi\inv}})^n.
\]
Thus we have proved that
\[
B(P,r)\subseteq\bigcup_{\sigma\in N(P,n)}\Omega_{\tilde e,\sigma}.
\]
Applying the push-forward $\nu_\Phi$ of the attracting current $\mu_\Phi$ we get
\[
\nu_\Phi(B(P,r))\leq\sum_{\sigma\in N(P,n)} \nu_\Phi(\Omega_{\tilde
  e,\sigma})
\]
For each $\sigma\in N(P,n)$
\[
\nu_\Phi(\Omega_{\tilde
  e,\sigma}) = \frac{\nu_\Phi(\Omega_{\sigma(\tilde e,\tilde f,n)})}{{\lambda_\Phi}^n}\leq  r^\delta\,\nu_\Phi(\Omega_{\sigma(\tilde e,\tilde f,n)}),
\]
and thus
\[
\nu_\Phi(B(P,r))\leq C_3\,r^\delta\,\max\{\nu_\Phi(\Omega_{\tilde e'})\ |\ \tilde e'\}
\]
which proves the Lemma.
\end{proof}

From Lemma~\ref{lem:nur} we deduce that the Hausdorff measure in
dimension $\delta$ is bounded below by the push forward of the
attracting current $\mu_\Phi$. This proves that the Hausdorff
dimension of $\Omega$ is bounded below by $\delta$.
\end{proof}

From the above proof we get that the Hausdorff measure in dimension
$\delta$ on the limit set $\Omega$ is not constant. Pulling back this
measure to the attracting lamination by the map $\CQ^2$ we get another
current supported by the attracting lamination $L_\Phi$. But we know
that the attracting lamination is uniquely ergodic thus we have
proved:

\begin{thm}\label{thm:measure}
The pushforward $\nu_\Phi$ of the attracting current $\mu_\Phi$ to the
limit set $\Omega$ is equal to the Hausdorff measure in dimension
$\delta$.\qed
\end{thm}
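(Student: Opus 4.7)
The plan is to deduce this theorem as a one-step consequence of the proof of Theorem~\ref{thm:hausdim} combined with the unique ergodicity of the attracting lamination. The strategy is to pull back the Hausdorff measure $\mathcal{H}^\delta\vert_\Omega$ along $\CQ^2$ to obtain a current on $\Latt$, then invoke unique ergodicity to identify this current with $\mu_\Phi$ up to a scalar, and finally push forward to recover $\nu_\Phi$ up to a scalar. Since $\mu_\Phi$ (and thus $\nu_\Phi$) is only defined up to a multiplicative constant, and likewise the metric on $T_{\Phi\inv}$ (and thus $\mathcal{H}^\delta$) is only defined up to rescaling, the equality of the theorem is to be read up to a normalizing factor.

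First I would define the candidate current $\mu'$ on $\Latt$ by pulling back $\mathcal{H}^\delta\vert_\Omega$. By Corollary~\ref{cor:Q2finitetoone} the map $\CQ^2$ is finite-to-one, and the set of points of $\Omega$ with more than two $\CQ^2$-preimages consists of finitely many $\FN$-orbits of points, hence is countable, hence $\mathcal{H}^\delta$-negligible since $\delta>0$. Away from this exceptional set $\CQ^2$ is exactly two-to-one, the two preimages being exchanged by the flip. A standard measurable selection on such a map produces a well-defined Borel measure $\mu'$ on $\Latt$ determined by $\mu'(A)=\mathcal{H}^\delta(\CQ^2(A))$ for Borel sets $A$ on which $\CQ^2$ is injective, extended by countable additivity.

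Next I would check that $\mu'$ is a current in the sense of Section~\ref{sec:laminations}, i.e.\ an $\FN$-invariant and flip-invariant Radon measure on $\partial^2\FN$ supported on $\Latt$. Finiteness on compact sets follows from $\mathcal{H}^\delta(\Omega)<\infty$ established in the upper-bound part of the proof of Theorem~\ref{thm:hausdim}. The $\FN$-action on $T_{\Phi\inv}$ is by isometries, so $\mathcal{H}^\delta$ is $\FN$-invariant, and combined with the $\FN$-equivariance of $\CQ^2$ this gives $\FN$-invariance of $\mu'$. Flip-invariance of $\mu'$ follows directly from flip-invariance of $\CQ^2$. As an internal consistency check, the scaling relation $\mathcal{H}^\delta(H(A))=\lambda^\delta \mathcal{H}^\delta(A)=\frac{1}{\lambda_\Phi}\mathcal{H}^\delta(A)$ matches the scaling $\Phi\cdot\mu_\Phi=\lambda_\Phi\mu_\Phi$ via Lemma~\ref{lem:Q2H}, so $\mu'$ scales under $\Phi$ in exactly the same way as $\mu_\Phi$.

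Finally I would invoke unique ergodicity: the dual lamination $L(T_{\Phi\inv})$, and hence its sublamination $\Latt$, is uniquely ergodic by \cite{chl1-III} as recalled in Section~\ref{sec:dual-attr-lamin}, so $\mu'=c\,\mu_\Phi$ for some $c>0$. Pushing forward by $\CQ^2$ gives $(\CQ^2)_*\mu'=c\,\nu_\Phi$ on one hand and, by the generic two-to-one property of $\CQ^2$ together with the construction of $\mu'$, equals $2\mathcal{H}^\delta\vert_\Omega$ on the other hand. Hence $\nu_\Phi$ and $\mathcal{H}^\delta\vert_\Omega$ are proportional, and a choice of normalization for either of the two inherent scaling ambiguities makes them equal. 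The main obstacle is the rigorous construction of the pullback $\mu'$, and specifically the verification that the exceptional set where $\CQ^2$ fails to be two-to-one is $\mathcal{H}^\delta$-null; once this countability argument is in place, everything else is a bookkeeping exercise combining Lemma~\ref{lem:Q2H}, Corollary~\ref{cor:Q2finitetoone}, and unique ergodicity.
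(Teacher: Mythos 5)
Your proposal is correct and follows essentially the same route as the paper, which proves the theorem in a two-sentence remark: pull back the Hausdorff measure in dimension $\delta$ along $\CQ^2$ to obtain a current supported on $\Latt$, then invoke unique ergodicity to identify it with $\mu_\Phi$ up to a multiplicative constant. Your write-up merely supplies details the paper leaves implicit (the countability of the exceptional fibers via Corollary~\ref{cor:Q2finitetoone}, the verification that the pullback is a flip- and $\FN$-invariant Radon measure, and the consistency of the scaling under $H$), all of which are sound.
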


Once again this equality is to be understood up to a multiplicative
constant.

\subsection{Compact heart of trees}\label{sec:heart}

In this section we relate the sets $\Omega$ and $\Omega_{\tilde e}$ of
the previous section to the compact heart of $T_{\Phi\inv}$ as defined in
\cite{chl4}. 

We fix a basis $\CA$ of $\FN$. This is equivalent to fixing a chart
$(R_\CA,*,\pi)$ where $R_\CA$ is the rose with $N$ petals and $\pi$
the corresponding marking isomorphism. Elements $\partial\FN$ are
identified with infinite reduced words in $\CA^{\pm 1}$.

The unit-cylinder $C_\CA(1)$ of $\partial^2\FN$ is the set of lines
that goes through the origin, or equivalently, it is the set of pairs
of infinite reduced words $(X,Y)$ with distinct first letters. This is
a compact subset of $\partial^2\FN$ whose translates cover the double
boundary: $\FN.C_\CA(1)=\partial^2\FN$.

For a basis $\CA$ of $\FN$ the \textbf{compact limit set} of the
repelling tree $T_{\Phi\inv}$ is defined in \cite{chl4} by
\[
\Omega_\CA=\CQ^2(L(T_{\Phi\inv})\cap C_\CA(1)).
\]
It is a compact subset of $\bar T_{\Phi\inv}$.

From \cite{chkll} we know that the dual lamination $L(T_{\Phi\inv})$ is the
diagonal closure of the attracting lamination $\Latt$. In particular
\[
\Omega=\CQ^2(\Latt)=\CQ^2(L(T_{\Phi\inv}))
\]
\[
\Omega_\CA=\CQ^2(L(T_{\Phi\inv})\cap C_\CA(1))=\CQ^2(\Latt\cap
C_\CA(1)).
\]

The translates of the compact limit set $\Omega_\CA$ cover the limit
set $\Omega$: $\FN.\Omega_\CA=\Omega$.

As the Hausdorff dimension does not increase by taking countable
unions we get that the Hausdorff dimension of $\Omega_\CA$ is
$\delta$.

The compact heart, $K_\CA$ of $T_{\Phi\inv}$ is the convex hull
$\Omega_\CA$.  Recall that the tree $T_{\Phi\inv}$ can be covered by
countably many intervals (thus $T_{\Phi\inv}$ has Hausdorff dimension
$1$), and note that this is not the case of its metric completion
$\bar T_{\Phi\inv}$.  The compact heart $K_\CA$ is a subset of the
union $\Omega_\CA\cup T_{\Phi\inv}$. We get

\begin{thm}\label{thm:compactheart}
  Let $\Phi$ be an irreducible (with irreducible powers) outer
  automorphism of $\FN$. Let $\CA$ be a basis of $\FN$. Let
  $T_{\Phi\inv}$ be the repelling tree of $\Phi$. Let $\Omega_\CA$ be
  the compact limit set and $K_\CA$ be the compact heart of
  $T_{\Phi\inv}$ with respect to $\CA$. Then
\[
\hdim(\Omega_\CA)=\delta=\frac{\ln\lambda_\Phi}{\ln\lambda_{\Phi\inv}}\text{
  and }\hdim(K_\CA)=\max\{ 1,\
\frac{\ln\lambda_\Phi}{\ln\lambda_{\Phi\inv}}\}.
\qed
\]
\end{thm}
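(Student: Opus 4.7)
The first equality follows by sandwiching $\Omega_\CA$ between two sets of known dimension. On the one hand, $\Omega_\CA \subseteq \Omega$ gives $\hdim(\Omega_\CA) \leq \hdim(\Omega) = \delta$ by Theorem~\ref{thm:hausdim}. On the other hand, $\FN.\Omega_\CA = \Omega$ expresses $\Omega$ as a countable union of isometric translates of $\Omega_\CA$; since the Hausdorff dimension of a countable union is the supremum of the dimensions and $\FN$ acts by isometries on $T_{\Phi\inv}$, we get $\hdim(\Omega) \leq \hdim(\Omega_\CA)$, whence equality. No work beyond this is needed for the first part.

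For the second equality, the plan is again to prove matching upper and lower bounds. The upper bound uses the inclusion $K_\CA \subseteq \Omega_\CA \cup T_{\Phi\inv}$ recalled just before the theorem statement, together with the observation that the tree $T_{\Phi\inv}$ is a countable union of arcs, each isometric to a real interval, and hence has Hausdorff dimension $1$. Combining this with the first part of the theorem and monotonicity of $\hdim$ under unions yields
\[
\hdim(K_\CA) \leq \max\bigl(\hdim(\Omega_\CA), \hdim(T_{\Phi\inv})\bigr) = \max(\delta,1).
\]

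For the lower bound we argue that $K_\CA$ contains subsets realising both values $\delta$ and $1$. The inclusion $\Omega_\CA \subseteq K_\CA$ immediately gives $\hdim(K_\CA) \geq \delta$ by the first part. To get $\hdim(K_\CA) \geq 1$, I would note that $\Omega_\CA$ is not reduced to a single point (this follows from the minimality of the action of $\FN$ on $T_{\Phi\inv}$ together with $\FN.\Omega_\CA = \Omega$ being dense or, alternatively, by exhibiting two distinct leaves of $\Latt \cap C_\CA(1)$ mapped to distinct points by $\CQ^2$). Since $K_\CA$ is the convex hull of $\Omega_\CA$ inside the $\R$-tree $T_{\Phi\inv}$, it therefore contains the geodesic segment joining any two distinct points of $\Omega_\CA$, and such a segment is isometric to a real interval of positive length, hence has Hausdorff dimension $1$. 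Combining the two lower bounds gives $\hdim(K_\CA) \geq \max(\delta,1)$.

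The only non-routine step is verifying that $\Omega_\CA$ contains at least two points so that $K_\CA$ genuinely contains an interval; everything else is formal manipulation with Hausdorff dimension and the previously established Theorem~\ref{thm:hausdim}. Once this is in place, the two inequalities match and Theorem~\ref{thm:compactheart} follows.
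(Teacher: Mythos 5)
Your proof is correct and follows essentially the same route as the paper: the first equality is obtained exactly as in the text by sandwiching $\Omega_\CA$ between $\Omega$ and its countable union of translates, and the second by combining the inclusion $K_\CA\subseteq\Omega_\CA\cup T_{\Phi\inv}$ with the fact that $T_{\Phi\inv}$ is a countable union of arcs. Your explicit verification of the lower bound $\hdim(K_\CA)\geq 1$ (via $\Omega_\CA$ containing two points, hence $K_\CA$ containing a nondegenerate segment) fills in a step the paper leaves implicit; note it can be shortened by observing that $\hdim(\Omega_\CA)=\delta>0$ forces $\Omega_\CA$ to be uncountable.
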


\section{Examples}\label{sec:examples}

In this section, we illustrate our result with two examples of irreducible (with irreducible powers) automorphisms. 

\subsection{Boshernitzan-Kornfeld example}

In \cite{bk} the following automorphism of $F_3$ is studied:
\[
\phi:\begin{array}[t]{rcl}a&\mapsto&b\\ b&\mapsto&caaa \\
c&\mapsto&caa\end{array}
\]
Let $\Phi$ be its outer class. We regard $\phi$ as a homeomorphism of
the rose with $3$ petals to get a train-track representative of $\Phi$
and the corresponding prefix-suffix automaton $\Sigma$, see
Figure~\ref{fig:bk-decomposition}.


\begin{figure}
\begin{center}
\input{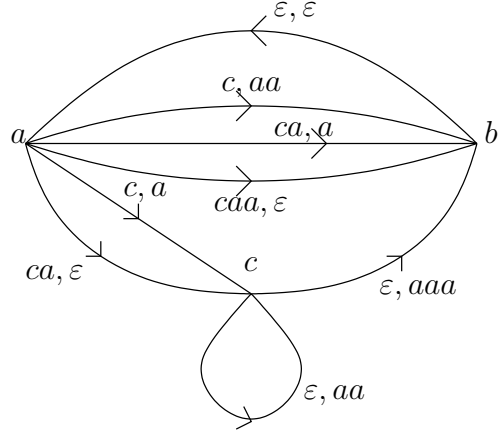}
\end{center}
\caption{\label{fig:bk-decomposition}Prefix-suffix automaton $\Sigma$ for Boshernitzan-Kornfeld automorphism}
\end{figure}

The transition matrix $M_\Phi$ and the expansion factor $\lambda_\Phi$
(which is the Perron-Frobenius eigen-value of $M_\Phi$) are
\[
M_\Phi=\left(\begin{array}{ccc}
0&3&2\\
1&0&0\\
0&1&1
      \end{array}\right)
\mbox{ and }
\lambda_{\Phi}
\approx 2.170.
\]

The inverse automorphism 
\[
\phi\inv:\begin{array}[t]{rcl}a&\mapsto&c\inv b\\ b&\mapsto&a \\
c&\mapsto&cb\inv cb\inv c\end{array}
\]
also defines on the rose with $3$ petals
 a train-track representative of $\Phi\inv$. The transition matrix and the expansion factor of $\Phi\inv$ are 
\[
M_{\Phi\inv}=\left(\begin{array}{ccc}
0&1&0\\
1&0&2\\
1&0&3
      \end{array}\right)
\mbox{ and }
\lambda_{\Phi\inv}
\approx 3.214.
\]
(note that positive and negative letters are both counted as one in the transition matrix).

M.~Boshernitzan and I.~Kornfeld associate this automorphism to an
interval translation map. 

Let $I=[0;1]$ be the unit interval and let $\alpha=\frac{1}{\lambda_{\Phi\inv}}\approx 0.311$ be
the positive root of $\alpha^3-\alpha^2-3\alpha+1=0$. The piecewise
translation $T:[0;1]\to [0;1]$ restricts to a translation on each of
the three intervals $I_a=[0;1-\alpha]$, $I_b=[1-\alpha;1-\alpha^2]$
and $I_c=[1-\alpha^2;1]$ of translation vector $\alpha$, $\alpha^2$
and $\alpha^2-1$ respectively.

\begin{figure}
\begin{center}
\input{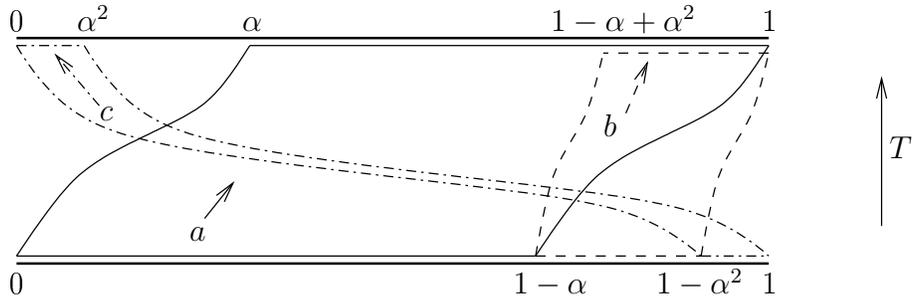}
\end{center}
\caption{\label{fig:bk}Interval translation of Boshernitzan-Kornfeld}
\end{figure}

The fact that $\phi$ and $T$ are associated can be seen by looking at
the first return map on the interval $[1-\alpha;1]$.

The repelling tree $T_{\Phi\inv}$ of $\Phi$ is the tree dual to (the lift to
the universal cover of) the vertical foliation of the mapping torus of
this interval translation. See \cite{gl-rank} and \cite{chl4} for a
precise construction of $T_{\Phi\inv}$ starting from the interval translation
$T$.

The compact heart $K_\CA$ of $T_{\Phi\inv}$ is the interval $I$. The
restriction of the action of the elements $a$, $b$ and $c$ of the
basis of $F_3$ to this interval $K_\CA=I$ are exactly the piecewise
exchange of the interval translation map, $T$.  The compact limit set
$\Omega_\CA$ is the limit set of the interval translation map:
\[
\Omega_\CA=\bigcap_{n\geq 0} T^n(I).
\]
This is a Cantor set
with Hausdorff dimension
$\frac{\ln\lambda_\Phi}{\ln\lambda_{\Phi\inv}}\approx 0,664$.

\subsection{Tribonacci example}

The following Tribonacci automorphism of $F_3$ has long been
studied
\[
\phi:\begin{array}[t]{rcl}a&\mapsto&ab\\ b&\mapsto&ac\\
c&\mapsto&a\end{array}
\]  
It is associated to what is known as {\em the} Rauzy fractal, and
X.~Bressaud studied its repelling tree (see \cite{bres}).

Let $\Phi$ be its outer class. We regard $\phi$ as a homeomorphism of
the rose with $3$ petals to get a train-track representative of $\Phi$
and the associated prefix-suffix automaton $\Sigma$, see
Figure~\ref{fig:tribo-decomposition}.

\begin{figure}
\begin{center}
\input{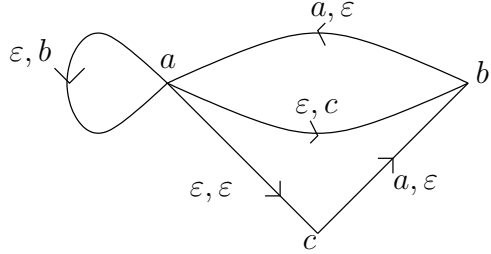}
\end{center}
\caption{\label{fig:tribo-decomposition}Prefix-suffix automaton $\Sigma$ for Tribonacci automorphism}
\end{figure}

The transition matrix and the
expansion factor are
\[
M_{\Phi}=\left(\begin{array}{ccc}
1&1&1\\
1&0&0\\
0&1&0
      \end{array}\right)
\mbox{ and }
\lambda_{\Phi}
\approx 1.839.
\]

A train-track representative of $\Phi\inv$ is given by the graph
$\Gamma$ of Figure~\ref{fig:tribo-inv-tt}, the homeomorphism $f$ and
the marking $\pi$:

\begin{figure}
\begin{center}
\input{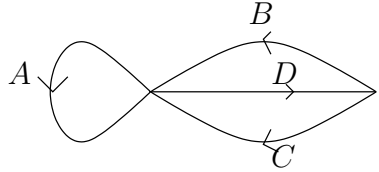} 
\end{center}
\caption{\label{fig:tribo-inv-tt}Graph $\Gamma$ of a train-track representative of the inverse of Tribonacci automorphism}
\end{figure}

\[
f:\begin{array}{rcl}A&\mapsto&DC\\ B&\mapsto&D\inv A\\ C&\mapsto&B\\ D&\mapsto&C\inv\end{array}
\quad
\pi:\begin{array}{rcl}a&\mapsto&A\\ b&\mapsto&DB\\ c&\mapsto&DC\end{array}
\]

The prefix-suffix automaton is given in Figure~\ref{fig:tribo-inv-pfa}.

\begin{figure}
\begin{center}
\input{tribo-inv-pfa.pstex_t} 
\end{center}
\caption{\label{fig:tribo-inv-pfa}Prefix-suffix automaton $\Sigma$ for the inverse of tribonacci automorphism}
\end{figure}

The outer automorphism $\Phi\inv$ has transition matrix and expansion factor
\[
M_{\Phi\inv}=
\left(\begin{array}{cccc} 
0&1&0&0\\
0&0&1&0\\
1&0&0&1\\
1&1&0&0\\
\end{array}\right)
\mbox{ and }
\lambda_{\Phi\inv}\approx 1.395. 
\]

The repelling tree $T_{\Phi\inv}$ has a connected limit set $\Omega_\CA=K_\CA$
which is of Hausdorff dimension $\delta=\frac{\ln\lambda_\Phi}{\ln\lambda_{\Phi\inv}}\approx 1.829$. The
$\R$-tree $K_\CA$, although compact, has Hausdorff dimension strictly bigger than 1.

X.~Bressaud has drawn nice pictures of (approximations of) the fractal
tree $K_\CA$ inside the Rauzy fractal (see \cite{bc}).

\bibliography{../CHL/bibli}{}
\bibliographystyle{alpha}

\end{document}